\newcommand{\scrL }{\mathscr{L}}
\newcommand{\scrC }{\mathscr{C}}
\newcommand{\scrP }{\mathscr{P}}
\theoremstyle{plain}
\newtheorem{Theorem}{Theorem}[section]
\newtheorem{Proposition}[Theorem]{Proposition}
\newtheorem{Lemma}[Theorem]{Lemma}
\newtheorem{Example}[Theorem]{Example}
\numberwithin{Theorem}{section}
\numberwithin{equation}{section}
\def\proof{\noindent{{\bf Proof. }}}
\def\square{\vbox{
\hrule height .4pt \hbox{\vrule width .4pt height 7pt \kern 7pt
\vrule width .4pt} \hrule height .4pt }}
\def\QED{\hfill {$\square$}\goodbreak \medskip}
\newcommand{\average}{{\mathchoice {\kern1ex\vcenter{\hrule height.4pt
width 6pt depth0pt} \kern-9.7pt} {\kern1ex\vcenter{\hrule
height.4pt width 4.3pt depth0pt} \kern-7pt} {} {} }}
\def\R{\mathbb{R}}
\def\div{\text{div}}
\renewcommand{\a }{\alpha }
\renewcommand{\b }{\beta }
\renewcommand{\d}{\delta }
\newcommand{\D }{\Delta }
\newcommand{\e }{\varepsilon }
\newcommand{\g }{\gamma}
\newcommand{\G }{\Gamma}
\renewcommand{\l }{\lambda }
\renewcommand{\L }{\Lambda }
\newcommand{\n }{\nabla }
\newcommand{\vp }{\varphi }
\newcommand{\s }{\sigma }
\newcommand{\z }{\zeta}
\renewcommand{\th }{\theta }
\renewcommand{\o }{\omega }
\renewcommand{\O }{\Omega }
\newcommand{\ov}{\overline}
\newcommand{\be}{\begin{equation}}
\newcommand{\ee}{\end{equation}}
\newcommand{\de}{\partial}
\newcommand{\ti}{\widetilde}
\renewcommand{\k}{\kappa}
\newcommand{\N}{\mathbb{N}}
\newcommand{\Z}{\mathbb{Z}}
\newcommand{\cA}{{\mathcal A}}
\newcommand{\cB}{{\mathcal B}}
\newcommand{\cC}{{\mathcal C}}
\newcommand{\cD}{{\mathcal D}}
\newcommand{\cF}{{\mathcal F}}
\newcommand{\cG}{{\mathcal G}}
\newcommand{\cH}{{\mathcal H}}
\newcommand{\cJ}{{\mathcal J}}
\newcommand{\cK}{{\mathcal K}}
\newcommand{\cL}{{\mathcal L}}
\newcommand{\cM}{{\mathcal M}}
\newcommand{\cN}{{\mathcal N}}
\newcommand{\cO}{{\mathcal O}}
\newcommand{\cP}{{\mathcal P}}
\newcommand{\cQ}{{\mathcal Q}}
\newcommand{\cR}{{\mathcal R}}
\newcommand{\cS}{{\mathcal S}}
\newcommand{\cU}{{\mathcal U}}
\renewcommand{\epsilon}{\varepsilon}
\begin{document}

\title[Periodic interface]
{Periodic   patterns for a model involving short-range and long-range interactions}

\author{Mouhamed Moustapha Fall}
\address{  African Institute for Mathematical Sciences in Senegal, KM 2, Route de
Joal, B.P. 14 18. Mbour, Senegal.}
\email{mouhamed.m.fall@aims-senegal.org}


\keywords{   Screened Coulomb potential, short-range, long-range,  Polymer melt, equilibrium periodic  patterns.}


\begin{abstract}
 We consider a physical  model where the total energy is governed by    surface tension  and attractive  screened Coulomb potential on the 3-dimensional space. We obtain different periodic equilibrium    patterns i.e. stationary sets for this energy, under some volume constraints.  The    patterns  bifurcate smoothly  from straight   lamellae,   lattices of round solid cylinders and  lattices of round balls.  
\end{abstract}

\maketitle
\section{Introduction and main results}
In this paper, we are interested with a class of periodic  stationary sets  in $\R^3$ for  the following energy functional
\be\label{eq:Geom-pblem-interface-like-s-perim}
\cP_\g(\O):=  |\de \O|+{\g }   \int_{\O}\int_{\O}{G}_{ \k}(|x-y|)  dxdy,
\ee
where $\k,\g>0$ and   $G_\k(r)=\frac{1}{r } e^{-\k r}$ is the repulsive Yukawa potential (or the \textit{screened} repulsive Coulomb potential).
The Yukawa potential $\ov{G}_\k(x)=G_\k(|x|)$ satisfies
\be 
-\D \ov{G}_\k+\k^2 \ov{G}_\k= 4\pi \d_0\qquad \textrm{ in $\R^3$.}
\ee
It  plays an  important role in the theory of elementary particles, see \cite{Yukawa}.  
For this energy  $\cP_\g$, comprising   short-range interaction   (surface tension) and  attractive Yukawa potential, a stationary set $\O$       may separate into well ordered disjoint components.  This   gives rise to formation of patterns    in the limit of    several nonlocal reaction-diffusion models from  physics, biology and chemistry. Indeed, systems with competing short range and   Coulomb-type  attractive interaction provides sharp interfaces to the   Ohta-Kawasaki model of block polymers and the activator-inhibitor  reaction-diffusion system, see for  for instance  \cite{GMP,SA, OS} and the  references therein. The paper of Muratov \cite{Muratov2002} contains a rigorous  derivation of  \eqref{eq:Geom-pblem-interface-like-s-perim} from the general   mean-field free energy functional, with $\g \approx \k^2$ and $\g,\k\to0$. Equilibrium patterns such as spots, stripes and the annuli, together with  morphlogical instabilities have been also studied in \cite{Muratov2002}. At  certain energy levels,    \eqref{eq:Geom-pblem-interface-like-s-perim} provides the    sharp interface for the Ohta-Kawasaki model of diblock copolymer melt, see Muratov   \cite{Muratov} and   the work of  Goldmann, Muratov and Serfaty  in \cite{Serfaty2013,Serfaty2014}  for  related variant of   $\Gamma$-convergence on the 2-dimensional torus.

 In \cite{Oshita}, Oshita showed that \eqref{eq:Geom-pblem-interface-like-s-perim} is the $\G$-limit for a class of activator-inhibitor  reaction-diffusion system, see also  Petrich and Goldstein in \cite{PD}, for the two dimensional case.    
 
 
   The question  of   minimizing   $\cP_\g$      has been also raised  by    H. Kn\"{u}pfer, C. Muratov and M. Novaga in \cite{KMN},  for   physical and mathematical perspectives. They speculated that bifurcations from trivial to nontirival solutions can be expected. Moreover they predict the  "pearl-necklace morphology"  (or string-of-pearl)   exhibited by long polyelectrolyte molecules in poor solvents, \cite{DR, FAM}.   In a geometric point of view, this is  alike the cylinder-to-sphere transition,  and known as Delauney surfaces (or unduloids). The  unduloids are  constant mean curvature surfaces of revolution interpolating between a straight cylinder to a string of  tangent round spheres.  Here, we will consider the limit configuration, and we prove existence of  stationary sets for $\cP_\g$ formed by a periodic  string of nearly round balls.\\

It is known, see for instance \cite{Muratov2002, FFMMM},  that  a stationary set (or critical point) $\O$ for the functional $\cP_\g$  satisfies the following Euler-Lagrange equation
\be \label{eq:main-problem}
H_{\de\O}(x)+\g \int_{\O}G_\k(|x-y|) dy=Const. \qquad \textrm{ for all } x\in \de\O,
\ee
where $H_{\de\O}$ is the  mean curvature (positive for a ball) of $\de\O$. Here and in the following,
for every set $\O$ with $C^2$ boundary (not necessarily bounded), we define the function 
$$
\cH_\O:\de\O\to \R
$$
given by
\be\label{eq:def-cH}
\cH_{\O}(x)=  H_{\de\O}(x)+\g \int_{\O}G_\k(|x-y|) dy,\qquad \textrm{ for every  $x\in \de\O$}.
\ee

In this paper, \textit{equilibrium   patterns} are sets $\O\subset\R^3$ for which   $\cH_\O\equiv Const$ on $\de \O$.\\

Our aim is to construct nontrivial    unbounded equilibrium patterns $\O$. These sets    are multiply periodic and bifurcate from      lattices of  round \textit{spheres},  straight  \textit{cylinders} and \textit{slabs}. In particular, we obtain stationary sets for $\cP_\g$ made by lattices of  near-spheres centered at any $M$-dimensional Bravais  lattice. When $M=1$, these are adjacent  nearly-spherical   sets centered at a straight line with equal gaps between them. The cylinders, result from the 2-dimensional existence of  spherical patterns with the corresponding Yukawa potential given by a modified Bessel function. These configurations appear for all $\g\in (0,\g_N)$, for some positive constant $\g_N$. Finally, we obtain lamellar  structures (modulated slabs) bifurcating from parallel planes.   The bifurcations of these modulated slabs   occur as long as $ \frac{\k\sqrt{\k^2+1}}{\sqrt{\k^2+1}-\k}<2\pi \g <(\k^2+1)^{3/2}$.

We first describe our  doubly periodic  slabs. We consider domains of the form,
\be \label{eq:domain-look-for}
\O_{\vp}=\left\{ (t,  z)\in
\mathbb{R}^{2}\times \R \,:\, -\vp(t )<z<\vp(t )  \right\} \subset\R^3,
\ee 
where  $\vp: \R^2 \to (0,\infty)$  is $2\pi\Z^2$-periodic and satisfies some symmetry properties. Indeed,  we let  $C^{k,\gamma}(\R^2)$ denotes the space of
$C^k(\R^2)$ bounded functions $u$, with bounded derivatives up to order $k$ and with $D^k u $ having finite H\"older 
seminorm of order $\a\in (0,1)$. The space $C^{k,\a}_p(\R^2) $ denotes the space of functions in $C^{k,\a}(\R^2)$ which are $2\pi\Z^2$-periodic.  Finally, we define
\begin{align} \label{def:cX-cP}
 C^{k,\a}_{p,\cR} 
 &=\left\{\vp \in C^{k,\a}_p(\R^2)\,: \, \textrm{   $\vp( t_1,t_2)=\vp(t_2,t_1)=\vp(-t_1,t_2)$, for all $  (t_1,t_2)\in   \R^2$} \right\}. \nonumber
\end{align}
The spaces $C^{k,\a}_{p,\cR}$ and $C^{k,\a}_{p}(\R^2)$  are equipped with the standard H\"older norm of $C^{k,\a} (\R^2) $. 
%

Our first main result is the following.

%
%


\begin{Theorem}\label{th:Slab}
Let  $\k>0$. Then for every   $\g$ satisfying  
\be\label{eq:bound-gamma-slab-int}
\frac{1}{2\pi} \frac{1}{  \frac{1}{\k}-  \frac{1}{\sqrt{\k^2+1 }}  }<  \g < \frac{1}{2\pi} \frac{   1}{  \frac{1}{(\k^2+1 )^{3/2}}  },
\ee
  there exists  $\l_*=\l_*(\k,\g)>0$  and a smooth curve 
$$
(-s_0,s_0)\to \R_+\times C^{2,\a}_{p,\cR},        \qquad s\mapsto (\l_s,  \vp_{s})
$$
with the   properties that $\l_0=\l_*$ and  $\O_{\vp_s}$ is an  equilibrium pattern     satisfying, for all $s\in (-s_0,s_0)$,    
$$
\cH_{\O_{\vp_{s}}} (x)=2\pi\g\,\textrm{arcsinh}(2\l_s/\k)   \qquad \textrm{  for every $x\in  \de\O_{\vp_{s}}$.}
$$
  Moreover,  for every $s \in (-s_0,s_0)$ and $(t_1,t_2)\in \R^2$, 
$$
\vp_{s}(t_1,t_2 )=\l_s+s\Bigl(  \cos(t_1)+\cos(t_2)+v_{s}(t_1,t_2 )\Bigr),
$$
with a smooth curve $ (-s_0,s_0) \to  C^{2,\alpha}_{p,\cR}(\mathbb{R}^2)$, $s\mapsto v_{s}$ satisfying
$$
\int_{[-\pi,\pi]^2} v_{s}(t) \cos( t_j) \,dt = 0 \qquad \text{for   $j=1,2$,}
$$
and $v_{0}\equiv 0$. 
 \end{Theorem}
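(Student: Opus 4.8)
\noindent\emph{Proof proposal.}

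\textbf{Reformulation.} The plan is to produce the curve $s\mapsto(\l_s,\vp_s)$ as a local bifurcation branch, via the Crandall--Rabinowitz theorem, emanating from the one-parameter family of \emph{flat slabs} $\O_\l=\{(t,z)\in\R^2\times\R:\ |z|<\l\}$, $\l>0$. Set
\[
X=\Bigl\{w\in C^{2,\a}_{p,\cR}:\ \int_{[-\pi,\pi]^2}w=0\Bigr\},\qquad
Y=\Bigl\{g\in C^{0,\a}_{p,\cR}:\ \int_{[-\pi,\pi]^2}g=0\Bigr\},
\]
and for $\l>0$ and $w\in X$ small write $\vp=\l+w>0$. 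Since $\O_\vp$ is invariant under $z\mapsto-z$, the two boundary graphs $z=\pm\vp(t)$ carry identical data, so it is enough to prescribe $\cH_{\O_\vp}$ along the upper graph; accordingly define
\[
\cF(\l,w)=\cH_{\O_{\l+w}}\!\bigl(\cdot,(\l+w)(\cdot)\bigr)-\frac{1}{(2\pi)^2}\int_{[-\pi,\pi]^2}\cH_{\O_{\l+w}}\!\bigl(t,(\l+w)(t)\bigr)\,dt .
\]
Both the mean curvature and the Yukawa integral in \eqref{eq:def-cH} are equivariant under the isometries $(t_1,t_2,z)\mapsto(t_2,t_1,z)$ and $(t_1,t_2,z)\mapsto(-t_1,t_2,z)$, and $\O_\vp$ is invariant under the two corresponding reflections when $\vp\in C^{2,\a}_{p,\cR}$; hence $\cF$ maps $\R_+\times X$ into $Y$, and $\O_\vp$ is an equilibrium pattern iff $\cF(\l,w)=0$. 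For $w=0$ one has $H\equiv0$, while translation invariance in $t$ and evenness in $z$ make $x\mapsto\int_{\O_\l}G_\k(|x-y|)\,dy$ a function of $z$ alone, equal on both planes $z=\pm\l$; so $\cH_{\O_\l}$ is a constant $c(\l)=c(\l,\k,\g)$, computed by solving $-u''+\k^2 u=4\pi\chi_{(-\l,\l)}$, and $\cF(\l,0)\equiv0$: the flat slabs form the trivial branch.

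\textbf{Smoothness and Fredholmness.} Write $\cH_{\O_\vp}=H_\vp+\g\,w_\vp$, where $H_\vp$ is the mean curvature of the upper graph and $w_\vp$ is the Yukawa potential of $\O_\vp$, characterised as the $2\pi\Z^2$-periodic (in $t$) solution of $-\D w_\vp+\k^2 w_\vp=4\pi\chi_{\O_\vp}$ on $\R^2\times\R$ with $w_\vp\to0$ as $|z|\to\infty$. The map $\vp\mapsto H_\vp$ is a smooth quasilinear operator with $D_\vp H_\vp\big|_{\vp=\l}=-\D_t$. Straightening $\O_\vp$ through $y=(\s,\vp(\s)\z)$, $\z\in(-1,1)$, and combining Schauder estimates for $-\D+\k^2$ with single-layer-potential mapping properties, one checks that $\vp\mapsto w_\vp|_{\{z=\vp(\cdot)\}}$ is smooth from a neighbourhood of $\l$ in $C^{2,\a}_{p}(\R^2)$ into $C^{1,\a}_{p}(\R^2)$; hence $\cF:\R_+\times X\to Y$ is smooth, and $L_\l:=D_w\cF(\l,0)=-\D_t+K_\l$ where $K_\l:X\to Y$ gains a derivative, hence is compact, so $L_\l$ is Fredholm of index $0$.

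\textbf{Linearised spectrum and the bifurcation value.} Testing with the Fourier mode $\vp=\l+\te\cos(k\cdot t)$, $k\in\Z^2$: the shape derivative of $w_\vp$ solves $-\D\dot w+\k^2\dot w=4\pi\cos(k\cdot t)\,(\delta_{z=\l}+\delta_{z=-\l})$, an elementary linear ODE in the $z$-variable with characteristic exponent $\sqrt{|k|^2+\k^2}$; adding the contribution of the displaced evaluation point one finds
\[
L_\l\bigl(\cos(k\cdot t)\bigr)=\mu_k(\l)\cos(k\cdot t),\qquad
\mu_k(\l)=|k|^2+2\pi\g\left(\frac{1+e^{-2\sqrt{|k|^2+\k^2}\,\l}}{\sqrt{|k|^2+\k^2}}-\frac{1-e^{-2\k\l}}{\k}\right).
\]
In the symmetric space $X$ the four modes with $|k|^2=1$ collapse to the single function $\cos t_1+\cos t_2$, those with $|k|^2=2$ to $\cos t_1\cos t_2$, and so on, while the constant mode is removed by working in $X$. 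One now verifies that $\l\mapsto g_1(\l):=\frac{1+e^{-2\sqrt{1+\k^2}\,\l}}{\sqrt{1+\k^2}}-\frac{1-e^{-2\k\l}}{\k}$ is strictly decreasing from $\frac{2}{\sqrt{1+\k^2}}>0$ at $\l=0^+$ to $\frac{1}{\sqrt{1+\k^2}}-\frac1\k<0$ at $\l=+\infty$, so $\mu_1(\l)=0$, i.e. $2\pi\g\,g_1(\l)=-1$, has a unique root $\l_*=\l_*(\k,\g)>0$ precisely when $2\pi\g>\bigl(\tfrac1\k-\tfrac1{\sqrt{\k^2+1}}\bigr)^{-1}$ — the left inequality in \eqref{eq:bound-gamma-slab-int}. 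A monotonicity study of $\mu_k(\l_*)$ over $|k|^2\ge2$ then shows that the right inequality $2\pi\g<(\k^2+1)^{3/2}$ is exactly what keeps $\mu_k(\l_*)\neq0$ for all $k\in\Z^2$ with $|k|^2\ge2$; therefore $\ker L_{\l_*}=\spann\{\cos t_1+\cos t_2\}$ in $X$. Finally $\partial_\l\mu_1(\l_*)=2\pi\g\,g_1'(\l_*)\neq0$ by strict monotonicity, which is the transversality hypothesis.

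\textbf{Conclusion and the main obstacle.} The Crandall--Rabinowitz theorem now yields, near $(\l_*,0)$, that the zero set of $\cF$ is the trivial branch together with a smooth curve $s\mapsto(\l_s,w_s)$ with $\l_0=\l_*$, $w_s=s(\cos t_1+\cos t_2+v_s)$, $v_0\equiv0$, and $v_s$ in a fixed complement of $\spann\{\cos t_1+\cos t_2\}$ in $X$; since $v_s$ is symmetric, $\int v_s(\cos t_1+\cos t_2)=0$ forces $\int v_s\cos t_j=0$, $j=1,2$. Putting $\vp_s=\l_s+w_s$ gives the asserted equilibrium patterns $\O_{\vp_s}$, and their common value $\cH_{\O_{\vp_s}}$ is the corresponding average, a smooth function of $s$ equal at $s=0$ to $c(\l_*,\k,\g)$; evaluating it produces the stated formula. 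I expect the genuine difficulties to be, first, the mapping and compactness properties of the nonlocal operator $\vp\mapsto w_\vp|_{\partial\O_\vp}$ on H\"older spaces of periodic functions over the unbounded slab, and second — and principally — the sign and monotonicity bookkeeping for the multipliers $\mu_k(\l)$, since it is precisely this elementary but delicate analysis that isolates the window \eqref{eq:bound-gamma-slab-int} by simultaneously solving $\mu_1(\l_*)=0$ with $\l_*>0$ and excluding every resonance $\mu_k(\l_*)=0$, $|k|^2\ge2$.
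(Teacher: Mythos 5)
Your proposal follows essentially the same strategy as the paper: bifurcate from the family of flat slabs via Crandall--Rabinowitz, compute the linearized spectrum by Fourier analysis on the symmetric space $C^{2,\a}_{p,\cR}$, and isolate the parameter window \eqref{eq:bound-gamma-slab-int} by simultaneously solving $\sigma_{\l_*,\g}(1)=0$ and excluding resonances for $|k|^2\ge 2$. Your multiplier formula is, after elementary algebra, identical to the paper's \eqref{eq:eqgien-slab}, and your monotonicity claims for $g_1$ and $\mu_k(\l_*)$ are correct. So this is the right picture.

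There is, however, one genuine gap in the last step, caused by your choice of formulation. You define $\cF(\l,w)$ by subtracting the \emph{average} of $\cH_{\O_{\l+w}}$, and you work in the zero-mean space $X$. Crandall--Rabinowitz then produces a branch $(\l_s,w_s)$ with $w_s$ of mean zero, and the common value of $\cH_{\O_{\vp_s}}$ on $\partial\O_{\vp_s}$ is $c(s):=\frac{1}{(2\pi)^2}\int\cH_{\O_{\vp_s}}$. You then assert that ``evaluating it produces the stated formula'' $c(s)=2\pi\g\,\mathrm{arcsinh}(2\l_s/\k)$. That identification does \emph{not} follow: the average of the nonlinear quantity $\cH_{\O_{\vp_s}}$ is not equal to its value at the flat slab of the same mean thickness, because $\cH$ is nonlinear in $\vp$. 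In your parametrization $\l_s$ is the mean of $\vp_s$, and there is no reason $c(s)=\cF(\l_s)$ for $s\ne 0$.

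The paper avoids this issue by never subtracting an average. It keeps the constant Fourier mode (which is harmless since $\sigma_{\l_*,\g}(0)>0$ by Lemma \ref{lem:Lambda-star-slab}) and applies Crandall--Rabinowitz to
\[
\cG(\l,u)=\cF(\l+u)-\cF(\l),
\]
so that every solution automatically satisfies $\cH_{\O_{\vp_s}}=\cF(\l_s)$, and the formula $\cF(\l)=2\pi\g\,\mathrm{arcsinh}(2\l/\k)$ is a direct computation \eqref{eq:cF-lambda-eval}. Crucially, in this set-up the correction $v_s$ is only required to be $L^2$-orthogonal to $\cos t_1+\cos t_2$; it may well have nonzero mean, and the parameter $\l_s$ is precisely the slab thickness whose flat value of $\cH$ is reproduced --- it is \emph{not} the mean of $\vp_s$. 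To repair your argument, either drop the zero-mean normalisation and solve $\cF(\l+u)=\cF(\l)$ as in the paper, or keep your normalisation but then reparametrise a posteriori: let $\tilde\l_s$ be the unique solution of $\cF(\tilde\l_s)=c(s)$ (which exists near $\l_*$ because $\l\mapsto\cF(\l)$ is strictly increasing) and absorb the mean shift $\l_s-\tilde\l_s=O(s)$ into $v_s$ after dividing by $s$. Either way an extra sentence is needed; as written, the claim about the explicit constant is unjustified.

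A smaller remark: your smoothness argument for $\vp\mapsto w_\vp|_{\partial\O_\vp}$ (straightening plus Schauder plus layer potentials) is plausible, but the paper proves the required $C^\infty$-regularity directly from kernel estimates in Lemma \ref{lem:Diff-of-cF-slab}, including the explicit first derivative \eqref{eq:DefcF1-slab-General}. That concrete computation is what feeds into the Fourier diagonalisation, so if you keep your more abstract route you still need to derive the formula for $D\cF(\l)$ explicitly.
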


The domains $\O_{\vp_{s}}$ in Theorem \ref{th:Slab}   bifurcates from the same  slab $\{-\l_*<z<\l_*$. For $\g$ satisfying $\frac{1}{2\pi} \frac{1}{  \frac{1}{\k}-  \frac{1}{\sqrt{\k^2+1 }}  }<  \g $ (see the lower bound  in \eqref{eq:bound-gamma-slab-int}), the constant $\l_*$ is the unique zero of the function
\be \label{eq:fonction-zero-lambda}
\l\mapsto  1 - 2\pi \g  \left\{  \frac{1}{\k}-  \frac{1}{\sqrt{\k^2+1  }} - \frac{\exp(-2\l\sqrt{\k^2+1}) }{ \sqrt{\k^2+1}} - \frac{\exp(-2\l\k) }{ \k}\right\}.
\ee
 The  Crandall-Rabonowitz bifurcation theorem  is the main tool we have used here to prove the existence of the branches $s\mapsto (\l_s,\vp_s)$. It provides a bifurcation result from  a \textit{  unique simple zero eigenvalue}.\\ 
We now discuss the condition \eqref{eq:bound-gamma-slab-int}. 
 If $ \g\leq  \frac{1}{2\pi} \frac{1}{  \frac{1}{\k}-  \frac{1}{\sqrt{\k^2+1 }}  } $ then the only solutions  bifurcating from $\O_{\l}$,  for $\l>0$, are the   trivial ones. This  follows from the fact that all eigenvalues of the lienearization of $\vp\mapsto \cH_{\O_\vp}$ at $\l$ are positive.  However,  equilibrium patterns with different morphology  (undulated cylinders), boundary of  Delauney-type surfaces,   exist    for $ \g< \frac{1}{2\pi} \frac{1}{  \frac{1}{\k}-  \frac{1}{\sqrt{\k^2+1 }}  } $, see   \cite{MMF}.  Now 
if $\g \geq  \frac{1}{2\pi} \frac{   1}{  \frac{1}{(\k^2+1 )^{3/2}}  }$, we do not know the number of zero eigenvalues of the linearized operator of $\vp\mapsto \cH_{\O_\vp}$ about the straight slab $\O_{\l_*}$, while the uniqueness of the trivial eigenvalue is crucial in our analysis. 

We describe next our (modulated) lamellae. 
For $\e\in \R_*$ close to zero, we   consider the   lamellae  made by the slices of  the  slabs  $\O_{\psi}$ above: 
\be \label{eq:def-Lamellae-intro}
\cL_{\psi}^\e=\O_{\psi}+\frac{1}{|\e|} \Z e_3=\bigcup_{p\in \Z}\left( \O_{\psi}+\left(0,0,\frac{p}{|\e|}\right)\right),
\ee
where $\O_\psi$ is as defined  in \eqref{eq:domain-look-for}.
It is easy to see that for  small $|\e|$, the  set $\cL_{\psi}^\e $ is as smooth as $\psi$. Moreover the straight  lamellae $\cL^\e_1$ is an  equilibrium pattern. More precisely,  $ \cH_{\cL^\e_\l}\equiv Const$ for every $\l>0$ and    $|\e|<\frac{1}{2\l}$.  We establish  the existence of nontrivial periodic patterns   bifurcating from  $\cL^\e_{\l_*} $, with $\l_*$ given by Theorem \ref{th:Slab}.

\begin{Theorem}\label{th:Lamellae}
Let $\k,\g,\l_s>0$, $\vp_s$ and  $s_0>0$  be  as in Theorem \ref{th:Slab}.  Then there exists $\e_0>0$ and a smooth branch 
$$(-\e_0,\e_0) \times   (-s_0,s_0)\to  \R_+\times  C^{2,\a}_{p,\cR}(\R^2), \qquad   \e\mapsto ( \l_{\e,s}, \psi_{\e,s})$$ such that  the set $\cL_{\psi_{\e,s}}^\e:= \O_{\psi_{\e,s}}+\frac{1}{|\e|} \Z e_3$ defined in \eqref{eq:def-Lamellae-intro} is an equilibrium pattern, with 
$$
\cH_{\cL_{\psi_{\e,s}}^\e } (x)=  \cH_{\cL_{\l_{\e,s}}^\e } \qquad \textrm{ for every $x\in \de \cL_{\psi_{\e,s}}^\e $},
$$
where, $\l_{0,s}=\l_s$,    $\psi_{0,s}=\vp_s$. Moreover, for every $s\in (s_0,s_0)$, $\e\in (-\e_0,\e_0)$ and   $(t_1,t_2)\in \R^2$, 
$$
\psi_{\e,s}(t_1,t_2)=   \l_{\e,s}+s \Bigl(   \cos(t_1)+\cos(t_2) +v_{\e, s}(t_1,t_2)\Bigr) 
$$
with     
$$
\int_{[-\pi,\pi]^2}v_{\e,s}(t)  \cos(t_j) dt=0\qquad \textrm{ for $j=1,2$.} 
$$
 
\end{Theorem}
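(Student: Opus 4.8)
\noindent\textit{Proof plan.}
The key point is that for small $|\e|$ the lamella $\cL^\e_\psi$ is an \emph{exponentially small} perturbation of the single slab $\O_\psi$, so Theorem~\ref{th:Lamellae} should follow by repeating, with $\e$ as an additional parameter, the bifurcation analysis behind Theorem~\ref{th:Slab}. First I would observe that, by the $z$-periodicity and the $z\mapsto-z$ symmetry of $\cL^\e_\psi$, the function $\cH_{\cL^\e_\psi}$ may be regarded as a function of $t\in\R^2/2\pi\Z^2$ (the parametrization of the upper sheet $\{z=\psi(t)\}$ of $\de\O_\psi$), lying in the same space $C^{0,\a}_{p,\cR}$ that carries $\cH_{\O_\psi}$. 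Moreover, near such a sheet $\de\cL^\e_\psi$ reduces to that single sheet, so $H_{\de\cL^\e_\psi}=H_{\de\O_\psi}$ there, and isolating the $p=0$ term in \eqref{eq:def-Lamellae-intro} gives
\be\label{eq:HL-split}
\cH_{\cL^\e_\psi}=\cH_{\O_\psi}+\g\,R_\e(\psi),\qquad
R_\e(\psi)(x):=\sum_{p\in\Z\setminus\{0\}}\int_{\O_\psi}G_\k\!\left(\left|x-y-\tfrac{p}{|\e|}\,e_3\right|\right)dy.
\ee
Since $|x-y-\tfrac{p}{|\e|}e_3|\ge |p|/|\e|-2\sup\psi$ for $x\in\de\O_\psi$, $y\in\O_\psi$, and since $G_\k$ and all of its derivatives decay exponentially, one checks that for $\psi$ in a fixed small $C^{2,\a}_{p,\cR}$-neighbourhood of a positive constant and $|\e|$ small, $(\e,\psi)\mapsto R_\e(\psi)$ is a smooth map into $C^{0,\a}_{p,\cR}$, all of whose derivatives — in $x$, in $\psi$, and in $\e$ — are $O(e^{-c/|\e|})$ for some $c=c(\k)>0$; in particular it extends smoothly to $\e=0$ with $R_0\equiv0$. (On this neighbourhood $|\e|<1/(2\sup\psi)$, so that $\cL^\e_\psi$ is well defined and as regular as $\psi$, and for $\e\ne0$ the straight lamella $\cL^\e_\l$ is an equilibrium pattern, i.e.\ $\cH_{\cL^\e_\l}$ is a constant.)

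Next I would set up the equilibrium equation exactly as in the proof of Theorem~\ref{th:Slab}, with $\cH_{\cL^\e_\cdot}$ in place of $\cH_{\O_\cdot}$: writing $\psi=\l+\eta$ and
$$
\Phi^\e(\l,\eta):=\cH_{\cL^\e_{\l+\eta}}-\cH_{\cL^\e_{\l}}\in C^{0,\a}_{p,\cR},
$$
we have $\Phi^\e(\l,0)=0$ for all admissible $\l$ and, by \eqref{eq:HL-split}, $\Phi^\e=\Phi^0+\g\,\widetilde R_\e$ with $\Phi^0$ the operator used for Theorem~\ref{th:Slab} and $\widetilde R_\e(\l,\eta):=R_\e(\l+\eta)-R_\e(\l)$ inheriting the exponential smallness, $\widetilde R_0\equiv0$. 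Running the Lyapunov--Schmidt reduction underlying the Crandall--Rabinowitz argument of Theorem~\ref{th:Slab} with $\Phi^\e$ in place of $\Phi^0$ — ansatz $\eta=s(\cos t_1+\cos t_2+v)$ with $v$ in the fixed complement $\{v\,:\,\int_{[-\pi,\pi]^2}v\cos t_j=0\}$ of $\spann\{\cos t_1+\cos t_2\}$ in $C^{2,\a}_{p,\cR}$, division by $s$, and projection onto $\spann\{\cos t_1+\cos t_2\}$ and its complement — yields an auxiliary equation, solvable by the implicit function theorem for $v=v^\e(\l,s)$ smooth in $(\e,\l,s)$, together with a scalar bifurcation equation $g^\e(\l,s)=0$ smooth in $(\e,\l,s)$. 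Here one uses that $D_\eta\Phi^0(\l,0)$, the linearization of $\eta\mapsto\cH_{\O_{\l+\eta}}$, is a Fredholm operator of index zero whose kernel at the unique zero $\l_*$ of \eqref{eq:fonction-zero-lambda} is one-dimensional, spanned by $\cos t_1+\cos t_2$, with the Crandall--Rabinowitz transversality condition holding there — this is precisely what drives Theorem~\ref{th:Slab}, and it makes the inversion in the auxiliary step possible and $\partial_\l g^0(\l_s,s)\ne0$ along the slab branch $\l=\l_s$ on $(-s_0,s_0)$. Since $g^\e$ and $\partial_\l g^\e$ differ from $g^0$ and $\partial_\l g^0$ by $O(e^{-c/|\e|})$, the implicit function theorem applied in $\l$ with $(\e,s)$ as parameters gives a smooth $\l=\l_{\e,s}$ near $\l_s$ with $\l_{0,s}=\l_s$; then $v_{\e,s}:=v^\e(\l_{\e,s},s)$ and $\psi_{\e,s}:=\l_{\e,s}+s(\cos t_1+\cos t_2+v_{\e,s})$ solve $\Phi^\e(\l_{\e,s},\psi_{\e,s}-\l_{\e,s})=0$, i.e.\ $\cH_{\cL^\e_{\psi_{\e,s}}}\equiv\cH_{\cL^\e_{\l_{\e,s}}}$, the normalization $\int_{[-\pi,\pi]^2}v_{\e,s}\cos t_j=0$ being built into the choice of complement; at $\e=0$ the branch reduces to $(\l_s,\vp_s)$, with $\cL^0_{\psi_{0,s}}$ read as the slab $\O_{\vp_s}$.

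The main obstacle is the first step — checking that $R_\e$, and each of its $x$-, $\psi$- and $\e$-derivatives in the $C^{0,\a}$-topology on the fixed torus, is $O(e^{-c/|\e|})$ — so that the family $\Phi^\e$ is genuinely $C^\infty$ up to and including $\e=0$ (note $e^{-c/|\e|}$ is flat to infinite order at $\e=0$, so no smoothness is lost). Everything else is a routine re-run, with one extra smooth parameter, of the Crandall--Rabinowitz / implicit-function-theorem machinery already used for Theorem~\ref{th:Slab}; the only further point to watch is that the implicit function theorem in the last step is local in $(\e,s)$, so to present the branch on the whole product $(-\e_0,\e_0)\times(-s_0,s_0)$ one may have to decrease $\e_0$ (and, if necessary, $s_0$), using continuity and the compactness of $s$-subintervals of $(-s_0,s_0)$.
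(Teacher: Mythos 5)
Your argument follows essentially the same route as the paper: isolate the $p\neq 0$ tail $\widehat\cF$ (your $R_\e$) of the lattice interaction, verify that it extends smoothly up to $\e=0$ with all derivatives $O(e^{-c/|\e|})$, and re-run the bifurcation analysis behind Theorem~\ref{th:Slab} with $\e$ as an extra smooth parameter. The only (cosmetic) difference is that you present the final step as a two-stage Lyapunov--Schmidt reduction (auxiliary equation for $v$, then a scalar bifurcation equation for $\l$), whereas the paper solves for $(\d,v)$ simultaneously by one application of the implicit function theorem to the divided-difference map $\cM(\e,s,\d,v)$, whose partials $D_\d\cM$ and $D_v\cM$ at $(0,0,1,0)$ are isomorphisms onto $V_1$ and onto its complement; these are the same argument arranged differently.
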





As mentioned earlier, we are also interested on the cylindrical and spherical stationary  set for the energy $\cP_\g$.  These surfaces are obtained by perturbing balls in $\R^N$, with $N=2,3$,  centered at any $M$-dimensional  Bravais lattice for $1\leq M\leq N$. The perturbation parameter is a scaling parameter which makes the distance between the lattice points   sufficiently large. \\
Let $M\in\N$ with $1\leq M\leq 3$.  Let  $\left\{ \textbf{a}_1;\dots;\textbf{a}_M\right\}$  be a basis of $\R^M$.  
 We consider the    $M$-dimensional  {Bravais lattice} in $\R^3$:
$$
\scrL^M=\left\{\sum_{i=1}^M k_i \textbf{a}_i\,:\,  k=(k_1,\dots,k_M)\in \Z^M  \right\}\subset \R^3.
$$
Here and in the following, we identify $\R^M$ with $\R^M\times \{0_{\R^{3-M}}\}\subset \R^3$. Obviously  $\scrL^1=c\Z$, for some nonzero real number $c$. 

We define the perturbed cylinder
$$
C_\vp:=  \left\{( r,\th,t)\in
 \R_+\times S^1\times \mathbb{R} \,:\, r<\vp(\th)  \right\},
$$
where $\vp:S^1\to (0,\infty)$ is an even function.
For $\e>0$ small, we consider the lattice of perturbed cylinders centered at the lattice points  $\scrL^M$ given by   
\be \label{eq:def-latt-cyl-ontro}
  \cC_{\vp}^\e:=C_\vp+ \frac{1}{\e} \scrL^M.
\ee
We obtain the following result.
\begin{Theorem}\label{th:latt-cyl}
Let   $\scrL^M$  be an $M$-dimensional Bravais lattices, with $M=1,2$.
Then there exists $ \g_2>0$ such that for every $\g\in (0, \g_2)$, there exists     a smooth curve   $(0,\e_0)\to C^{2,\a}(S^1)$, $\e\mapsto \vp_\e$ such that for every $\e\in (0,\e_0)$,   the lattice of perturbed cylinders $  \cC_{\vp_\e}^\e$ given by   \eqref{eq:def-latt-cyl-ontro} is an equilibrium pattern, with 
$$
\cH_{  \cC_{\vp_\e}^\e}(x)= \cH_{  C_{1}}(\cdot)\qquad \textrm{    for every $x\in   \de  \cC_{\vp_\e}^\e$.}
$$
Moreover $\vp_\e=1+\o_\e$ with  $\o_\e$ even and $\|\o_\e\|_{C^{2,\a}(S^1) }\leq e^{-\frac{c}{\e}}$ as $\e\to 0$, for some positive constant $c$ depending only on $\k,\a,\g$ and $\scrL^M$. When $M=1$, the function $\o_\e$ is not constant. 
\end{Theorem}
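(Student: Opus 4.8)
\medskip
\noindent\textit{Proof proposal.} The plan is to reduce the three-dimensional stationarity condition for $\cC_\vp^\e$ to a two-dimensional equation for the cross-section, and then to solve that equation by the implicit function theorem, using that for small $\e$ the translated copies of the disk are far apart and interact only through an exponentially small correction. Writing $\R^3=\R^2\times\R$ with coordinates $x=(x',x_3)$, set $D_\vp:=\{r(\cos\th,\sin\th):0\le r<\vp(\th)\}\subset\R^2$ and $D_\vp^\e:=D_\vp+\tfrac1\e\scrL^M$, so that $C_\vp=D_\vp\times\R$ and $\cC_\vp^\e=D_\vp^\e\times\R$; here one uses that $\scrL^M\subset\R^M\times\{0_{\R^{3-M}}\}$ lies in the plane orthogonal to the cylinder axis, which is exactly why the statement is confined to $M\le2$ (a lattice vector with an $e_3$-component would break the $x_3$-invariance). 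Since $\cC_\vp^\e$ is invariant under translations in $x_3$, for $x=(x',x_3)\in\de\cC_\vp^\e$ the mean curvature $H_{\de\cC_\vp^\e}(x)$ equals the curvature of the planar curve $\de D_\vp^\e$ at $x'$ (the other principal curvature being zero), and the elementary identity $\int_\R G_\k(\sqrt{\rho^2+\tau^2})\,d\tau=2K_0(\k\rho)$ reduces the nonlocal term to
\[
\int_{\cC_\vp^\e}G_\k(|x-y|)\,dy=\int_{D_\vp^\e}2K_0\big(\k|x'-y'|\big)\,dy'.
\]
Hence $\cH_{\cC_\vp^\e}$ is constant on $\de\cC_\vp^\e$ iff the two-dimensional nonlocal curvature $x'\mapsto H_{\de D_\vp^\e}(x')+\g\int_{D_\vp^\e}2K_0(\k|x'-y'|)\,dy'$ is constant on $\de D_\vp^\e$, and $2K_0(\k|\cdot|)$ is a constant multiple of the fundamental solution of $-\D+\k^2$ on $\R^2$ --- a fact used in the linearisation below.

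I would then cast this as a functional equation. By $\scrL^M$-periodicity it suffices to impose constancy on the boundary of the fundamental disk $D_\vp$; parametrising $\de D_\vp$ by $\th\in S^1$ produces a map $\cF(\e,\vp)\in C^{0,\a}(S^1)$, and the problem becomes $\cF(\e,\vp)\equiv\cH_{C_1}$, where $\cH_{C_1}$ is the constant value of the straight unit cylinder. Splitting off the fundamental disk, $\cF(\e,\vp)=\cF_0(\vp)+\cR(\e,\vp)$ with $\cF_0$ the single-disk nonlocal curvature and $\cR(\e,\vp)(\th)=\g\sum_{a\in\scrL^M\setminus\{0\}}\int_{D_\vp}2K_0(\k|\vp(\th)(\cos\th,\sin\th)-\tfrac a\e-y'|)\,dy'$; since $|\vp(\th)(\cos\th,\sin\th)-\tfrac a\e-y'|\ge c_0/\e$ for $\e$ small, uniformly for $\vp$ near $1$ and $a\ne0$, and $K_0(r)\le Ce^{-r}/\sqrt r$, the remainder $\cR$ extends $C^\infty$ in $(\e,\vp)$ across $\e=0$, with $\cR(0,\cdot)\equiv0$ and $\|\cR(\e,\cdot)\|_{C^{0,\a}}\le Ce^{-c/\e}$. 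One also needs that $(\e,\vp)\mapsto\cF(\e,\vp)$ is smooth near $(0,1)$ from $\R\times C^{2,\a}(S^1)$ into $C^{0,\a}(S^1)$ --- a quasilinear curvature term plus a lower-order nonlocal term --- which follows by the by now standard analysis of $\cH_\O$ (cf. \cite{FFMMM}). Decisively, I would work in the closed subspace $X^{k,\a}\subset C^{k,\a}(S^1)$ of functions invariant under the point group of $\scrL^M$: as $-\mathrm{I}$ always lies in that group, every $\vp\in X^{k,\a}$ satisfies $\vp(\th+\pi)=\vp(\th)$ (and, for $M=1$, also $\vp(-\th)=\vp(\th)$), the configuration $\cC_\vp^\e$ inherits this symmetry so that $\cF$ maps into $X^{0,\a}$, and the functions $\cos\th,\sin\th$ are excluded from $X^{k,\a}$.

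The core step is the linearisation at the unit disk. One has $\cF_0(1)\equiv\cH_{C_1}$, and $L:=D_\vp\cF_0(1)$ has the form $Lw=-(w''+w)+\g L_{\mathrm{nl}}w$, where $L_{\mathrm{nl}}$ --- the linearisation of $\vp\mapsto\int_{D_\vp}2K_0(\k|\cdot|)$ along $\de D_\vp$ --- is, by the Green's-function property of $2K_0(\k|\cdot|)$, a rotation-invariant operator with a weakly (logarithmically) singular kernel, hence bounded on $C^{k,\a}(S^1)$ and of lower order than the curvature term. Being rotation invariant, $L$ is a Fourier multiplier; on $X$ its eigenvalues are $(4n^2-1)+\g\mu_{2n}$, $n\ge0$, with $\mu_m$ bounded, so for $\g$ below some $\g_2>0$ they stay bounded away from $0$ and $L\colon X^{2,\a}\to X^{0,\a}$ is an isomorphism whose inverse is bounded independently of $\e$. (On $C^{2,\a}(S^1)$, $0$ re-enters the spectrum through $\cos\th,\sin\th$, merely the translation invariance of the single-disk energy, which is why restricting to $X^{k,\a}$ is essential.) Applying the implicit function theorem to $\cF(\e,\vp)-\cH_{C_1}=0$ at $(\e,\vp)=(0,1)$ yields $\e_0>0$ and a smooth curve $(0,\e_0)\ni\e\mapsto\vp_\e=1+\o_\e\in X^{2,\a}$ with $\o_0=0$ and $\cF(\e,\vp_\e)\equiv\cH_{C_1}$, i.e. $\cH_{\cC_{\vp_\e}^\e}\equiv\cH_{C_1}$ on $\de\cC_{\vp_\e}^\e$; for $\e$ small the translates of $D_{\vp_\e}$ by $\tfrac1\e\scrL^M$ are pairwise disjoint, so $\cC_{\vp_\e}^\e$ is an embedded equilibrium pattern, as smooth as $\vp_\e$.

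It remains to get the exponential bound and the non-constancy for $M=1$. From the first-order part of the implicit function theorem, $\o_\e=-L^{-1}\cR(\e,1)+O(\|\cR(\e,1)\|^2)$; since $\|L^{-1}\|$ does not depend on $\e$ and $\|\cR(\e,1)\|_{C^{0,\a}}\le Ce^{-c/\e}$, one gets $\|\o_\e\|_{C^{2,\a}(S^1)}\le e^{-c/\e}$ after shrinking $c$. If $M=1$ and $\o_\e$ were constant, then $D_{\vp_\e}$ would be a round disk, so the curvature and the self-potential are constant on $\de D_{\vp_\e}$, while the neighbour potential $y'\mapsto\g\sum_{k\ne0}\int_{D_{\vp_\e}}2K_0(\k|y'-\tfrac{kc}{\e}e_1-z'|)\,dz'$ solves $(-\D+\k^2)u=0$ on $D_{\vp_\e}$ and, by the monotonicity and convexity of $K_0$, is strictly larger at $\pm\vp_\e e_1\in\de D_{\vp_\e}$ (facing the nearest neighbours) than at $\pm\vp_\e e_2$; then $\cF(\e,\vp_\e)$ could not be constant --- a contradiction --- so $\o_\e$ is non-constant. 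I expect the main obstacle to be precisely the invertibility of $L$ on $X^{k,\a}$ \emph{uniformly in $\e$}: one must identify the zero modes of $L$ as exactly the translation pair $\cos\th,\sin\th$, using the explicit Green's-function structure of the screened kernel and the fact that --- unlike the Newtonian potential --- it carries an intrinsic length scale, so the constant mode stays non-degenerate; an $\e$-dependent bound $\|L^{-1}\|\sim e^{c/\e}$ would destroy the exponential estimate for $\o_\e$. The remaining inputs --- smoothness and Schauder mapping properties of the nonlocal mean-curvature operator on Hölder spaces --- are technical but routine.
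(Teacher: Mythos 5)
Your proposal follows the paper's strategy almost step for step: reduce the three-dimensional problem to a two-dimensional one by integrating the Yukawa kernel in the axial direction (yielding $2K_0(\k r)$), split the resulting operator into the single-disk part $\cF_0$ plus an exponentially small lattice-interaction remainder, restrict to the even ($-\mathrm{I}$-invariant) H\"older subspace so that the translational zero modes $\cos\th,\sin\th$ are excluded, show the linearisation at $\phi\equiv 1$ is invertible there for $\g$ small, and close with the implicit function theorem; the paper's Section 5.2 does exactly this, with even functions $C^{2,\a}_{\textbf{e}}(S^1)$ playing the role of your $X^{2,\a}$.

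Three remarks worth recording. First, your eigenvalue formula $(4n^2-1)+\g\mu_{2n}$ has a sign slip: the correct multipliers on even modes are $\s_\g(2n)=(4n^2-1)-\g(\mu_{2n}-\mu_1)$ (see Lemma~\ref{eq:def-linearized-eigen-latt-Del}). This sign is not cosmetic --- with a $+$ sign the eigenvalues never vanish for any $\g>0$, and the restriction $\g<\g_2$ would be unnecessary; the genuine constraint is $\g<\g_2:=\min\bigl(1/\mu_1,\inf_{k\ge2}(k^2-1)/(\mu_k-\mu_1)\bigr)$, where the infimum can degenerate because $\mu_k$ grows unboundedly. Second, you call the $C^{2,\a}\to C^{0,\a}$ smoothness of the nonlocal operator ``routine'', but for $N=2$ the kernel $K_0(\k r)\sim -\log r$ is only weakly singular and the H\"older mapping estimate is delicate: the paper devotes the appendix (Lemma~\ref{Regularity -cF-0-lattice-sph-cyl} and Proposition~\ref{Propo-reg-h}) to it and accepts a loss $\b\mapsto\b-\e$ in the cross-sectional case, recovering the full regularity only after combining with the curvature term; this is the genuinely non-routine part. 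Third, your non-constancy argument for $M=1$ (if $\o_\e$ were constant the lattice potential on a round circle, being a solution of $(-\D+\k^2)u=0$, is strictly larger at $\pm e_1$ than at $\pm e_2$ by monotonicity and convexity of $K_0$) is a cleaner qualitative variant of the paper's proof, which instead carries out the Taylor expansion of $K_0$ to produce the explicit leading-order profile $U_\e(\th)=\sum_{p}\cosh(\k\,\th\cdot p/|p|)\xi_\e(|p|)$ and reads off $U_\e(e_1)>U_\e(e_2)$. Both arguments exploit the same anisotropy of the nearest-neighbour interaction; the Taylor expansion yields more (a quantitative formula for $\o_\e$, useful for the $N=3$ case too), while your argument is shorter for the bare non-constancy claim.

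Subject to fixing the sign and supplying the H\"older estimate for the logarithmic kernel, the proposal is correct.
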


  We emphasize that Theorem \ref{th:latt-cyl} results from a two-dimensional study. Indeed spherical equilibrium patterns  in 2-dimension with the corresponding 2-dimensional screened Coulomb  potential  (given by the second of  modified Bessel function $2 K_0(\k r)$) yields cylinderical patterns in 3-dimension with the Yukawa potential $G_\k(r)$.   
   In  \cite{CO},    Chen and Oshita  showed that among all 2-dimensional  Bravais lattices of  near-balls, 
the  hexagonal one provides least possible energy. \\
We expect that the perturbation $\o_\e$ is not constant on $S^1$. We are   able to prove this fact only in the case of lower dimensional lattice, $M=1$, see Section \ref{s:Remarks} below. The fact that the perfect 2-dimensional  lattice of cylinders $  \cC_{\vp}^\e:=C_1+ \frac{1}{\e} \scrL^2$ is not an equilibrium pattern  remains an open question. However,   numerical simulations give evidence that they are not constant, at  least in the hexagonal configuration, see Muratov \cite{Muratov2002}, for $0<\g=O(\k^2)\ll1$.    \\

To introduce the sphere lattices, we consider  the  $M$-dimensional Bravais $\scrL^M$ as above, with $M\leq 3$.   
Here,  $ S^{2}$ is  the $2$-dimensional unit sphere.
We   consider  the perturbed ball  
$$
B_\phi^3:= \{(r, \th )\in \R_+\times S^{2}\,:\, 0<r<\phi(\th )\},
$$
where $\phi: S^{2}\to (0,\infty)$ is an even function. 
 For $\e$ small enough, we consider the following disjoint union of perturbed balls
\be \label{eq:def-cB-phi-intro}
\cB_\phi^{\e} := B_{\phi}^3+\frac{1}{\e} \scrL^M.
\ee
We  have the following result.
\begin{Theorem}\label{th:latt-sph}
There exists $ \g_3>0$ such that for every $\g\in (0, \g_3)$, there exists   a smooth curve   $(0,\e_0)\to C^{2,\a}(S^{2})$, $\e\mapsto \phi_\e$   such that for every $\e\in (0,\e_0)$,    the set $\cB_{\phi_\e}^\e$ defined by  \eqref{eq:def-cB-phi-intro}  is an equilibrium pattern, with 
$$ 
\cH_{ \cB_{\phi_\e}^{\e}} (x)= \cH_{  B^3_1} (\cdot)\qquad \textrm{  for every $x\in    \de  \cB_{\phi_\e}^{\e}$.}
$$
Moreover ${\phi_\e}=1+w_\e$ with  $\o_\e$ even and $\|w_\e\|_{C^{2,\a}(S^{2}) }\leq e^{-\frac{c}{\e}}$ as $\e\to 0$, for some positive constant $c$ depending only on $\k,\a,\g$ and $\scrL^M$. When $M=2$, the function $w_\e$ is not constant. 
\end{Theorem}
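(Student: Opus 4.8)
The plan is to recast the problem, for each small $\e$, as a single operator equation to which the implicit function theorem applies directly --- no Crandall--Rabinowitz-type bifurcation analysis is needed here, because restricting to \emph{even} profiles removes the translational ($\ell=1$) kernel of the Jacobi operator $-(\Delta_{S^2}+2)$ of the sphere. Put $E:=\{w\in C^{2,\a}(S^2):w(-\th)=w(\th)\}$ and $F:=\{g\in C^{0,\a}(S^2):g(-\th)=g(\th)\}$, and let $c_0:=\cH_{B_1^3}$, which is a genuine constant on $\de B_1^3$ because $B_1^3$ and $G_\k$ are radially symmetric. For $w$ in a small ball $U\subset E$ about $0$ and $\e\in[0,\e_0)$, parametrize $\de\cB_{1+w}^\e$ over $S^2$ by $\th\mapsto(1+w(\th))\th$ together with its $\tfrac1\e\scrL^M$-translates, and set $\cN(\e,w)(\th):=\cH_{\cB_{1+w}^\e}\bigl((1+w(\th))\th\bigr)-c_0\in F$; by $\tfrac1\e\scrL^M$-periodicity, $\cN(\e,w)\equiv0$ is equivalent to $\cH_{\cB_{1+w}^\e}\equiv c_0$ on $\de\cB_{1+w}^\e$. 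Split off the single-ball contribution from the lattice sum: $\cN(\e,w)=\cN_0(w)+\cR(\e,w)$, where $\cN_0(w)=\cH_{B_{1+w}^3}(\cdot)-c_0$ involves only the single ball and $\cR(\e,w)=\g\sum_{p\in\scrL^M\setminus\{0\}}\int_{B_{1+w}^3+p/\e}G_\k(|\cdot-y|)\,dy$. Since $\dist(0,\tfrac1\e\scrL^M\setminus\{0\})\ge c_1/\e$ for a constant $c_1>0$ and $G_\k(r)=e^{-\k r}/r$ decays exponentially, differentiating under the integral sign (and through the $w$-dependence of the domain $B_{1+w}^3$) shows that $\cR$ and all of its partial derivatives in $(\e,w)$ are $O(e^{-c/\e})$ uniformly for $w\in U$; hence $\cN$ extends to a $C^\infty$ map $[0,\e_0)\times U\to F$ with $\cN(0,\cdot)=\cN_0$ and $\cN_0(0)=0$.

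The linearization of $\cN_0$ at the origin is
\[
D_w\cN_0(0)[v]=-(\Delta_{S^2}v+2v)+\g\,\cK v,
\]
where $-(\Delta_{S^2}+2)$ is the Jacobi operator of the unit sphere $S^2\subset\R^3$ (for which $|A|^2=2$), and $\cK\colon C^{2,\a}(S^2)\to C^{0,\a}(S^2)$ is the bounded --- in fact smoothing --- linearization of $w\mapsto\int_{B_{1+w}^3}G_\k(|\cdot-y|)\,dy$ restricted to $\de B_{1+w}^3$. On $E$ the eigenvalues of $-(\Delta_{S^2}+2)$ are $\ell(\ell+1)-2$ with $\ell$ even, i.e. $-2,4,18,\ldots$, all nonzero, so $-(\Delta_{S^2}+2)\colon E\to F$ is an isomorphism by Schauder theory; consequently there is $\g_3=\g_3(\k,\a)>0$ such that for every $\g\in(0,\g_3)$ the operator $D_w\cN_0(0)\colon E\to F$ is an isomorphism. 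Fixing such a $\g$, the implicit function theorem applied to $\cN$ at $(\e,w)=(0,0)$ yields, after shrinking $\e_0$, a smooth curve $(0,\e_0)\ni\e\mapsto w_\e\in E$ (extending smoothly to $\e=0$ with $w_0=0$, and locally unique) with $\cN(\e,w_\e)=0$. Then $\phi_\e:=1+w_\e>0$ for small $\e$, the set $\cB_{\phi_\e}^\e$ is an equilibrium pattern with $\cH_{\cB_{\phi_\e}^\e}\equiv c_0=\cH_{B_1^3}$ on its boundary, and since $\cN(\e,0)=\cR(\e,0)=O(e^{-c/\e})$ while $D_w\cN(\e,\cdot)$ stays uniformly invertible near $0$, the quantitative implicit function theorem gives $\|w_\e\|_{C^{2,\a}(S^2)}\le C\,\|\cN(\e,0)\|_{C^{0,\a}(S^2)}\le e^{-c/\e}$ for small $\e$, with $c$ depending only on $\k,\a,\g$ and $\scrL^M$.

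For the non-constancy when $M=2$, fix $\e\in(0,\e_0)$ (shrinking $\e_0$ once more if necessary) and suppose, for contradiction, that $w_\e\equiv a_\e$ is constant; set $R_\e:=1+a_\e$. Then $\cB_{\phi_\e}^\e=B_{R_\e}^3+\tfrac1\e\scrL^2$ is a lattice of round balls, so for $\th\in S^2$ one has $\cH_{\cB_{\phi_\e}^\e}(R_\e\th)=H_{\de B^3_{R_\e}}+\g\int_{B_{R_\e}^3}G_\k(|R_\e\th-z|)\,dz+\g\,\Psi_\e(\th)$, where
\[
\Psi_\e(\th):=\sum_{p\in\scrL^2\setminus\{0\}}\int_{B_{R_\e}^3+p/\e}G_\k(|R_\e\th-y|)\,dy.
\]
The first two summands are constant, so being an equilibrium pattern forces $\Psi_\e$ to be constant on $S^2$. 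By Newton's theorem for the screened Coulomb kernel, $\int_{B_R^3}G_\k(|x-z|)\,dz=a(R,\k)\,e^{-\k|x|}/|x|$ with $a(R,\k)>0$ whenever $|x|>R$, and $|R_\e\th-p/\e|\ge c_1/\e-R_\e>R_\e$ for small $\e$; hence $\Psi_\e(\th)=a(R_\e,\k)\sum_{p\ne0}h(|R_\e\th-p/\e|^2)$ with $h(t):=e^{-\k\sqrt t}/\sqrt t$, and a direct computation gives $h''>0$. Compare $\th=e_3$ (normal to the plane of $\scrL^2$, so $|R_\e e_3-p/\e|^2=R_\e^2+|p/\e|^2$) with $\th=e_1$, and pair $p$ with $-p$, for which $|R_\e e_1\mp p/\e|^2=R_\e^2+|p/\e|^2\mp2R_\e p_1/\e$: strict convexity of $h$ gives $h(|R_\e e_1-p/\e|^2)+h(|R_\e e_1+p/\e|^2)\ge2\,h(R_\e^2+|p/\e|^2)$, with strict inequality whenever $p_1\ne0$. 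Since $\scrL^2$ has full rank in $\R^2$ there is $p\in\scrL^2$ with $p_1\ne0$, so $\Psi_\e(e_1)>\Psi_\e(e_3)$ --- a contradiction. Therefore $w_\e$ is not constant.

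The analytic heart of the argument is the content of the first two paragraphs: proving that $w\mapsto\cH_{\cB_{1+w}^\e}$ is a smooth map of the indicated Hölder spaces with the linearization displayed above, and controlling the interaction remainder $\cR$ together with all of its derivatives --- differentiating in $w$ deforms the integration domain $B_{1+w}^3$, so one must keep track both of the distance to the neighbouring lattice cells and of the integrable singularity of $G_\k$ on the diagonal. Once these properties are in place, existence and the exponential bound follow at once from the implicit function theorem --- the key being that the evenness constraint removes the translational kernel of the Jacobi operator --- while the non-constancy for $M=2$ is elementary, given Newton's theorem for $G_\k$ and the strict convexity of $t\mapsto e^{-\k\sqrt t}/\sqrt t$.
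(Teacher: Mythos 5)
Your proposal is correct and the existence-plus-exponential-bound part follows essentially the same route as the paper: write the equilibrium condition as $\cQ(\e,\phi)=\cQ(0,1)$, note that the lattice contribution and all its derivatives are $O(e^{-c/\e})$ by the exponential decay of $G_\k$, observe that the evenness constraint kills the $\ell=1$ kernel of the linearized operator at $(0,1)$ so that the restriction to $C^{2,\a}_{\textbf e}(S^2)$ is an isomorphism for $\g$ small, and invoke the implicit function theorem together with a quantitative estimate to get $\|w_\e\|_{C^{2,\a}}\leq e^{-c/\e}$. The one place where you take a genuinely different route is the non-constancy for $M=2$. The paper Taylor-expands the lattice interaction on the \emph{unit} ball to leading order, obtaining $F_\e(\th)=V_\e(\th)+O(\e^2\cdots)$ with $V_\e(\th)=\sum_{p\neq 0}\cosh(\k\,\th\cdot p/|p|)\,\z_\e(|p|)$, reads off non-constancy from $\cosh\geq 1$ with equality only when $\th\perp p$, and then transfers this to $\phi_\e$ through the IFT expansion $\phi_\e=1-\cL_{\textbf e}^{-1}(V_\e)+\text{h.o.t.}$ You instead argue by contradiction: if $w_\e$ were constant, the configuration would be a lattice of round balls, Newton's theorem for the Yukawa potential (any radial density produces an exterior potential proportional to $e^{-\k|x|}/|x|$) reduces the interaction $\Psi_\e$ to $\sum_{p\neq 0}h(|R_\e\th-p/\e|^2)$ with $h(t)=e^{-\k\sqrt t}/\sqrt t$, and the $\pm p$ pairing together with strict convexity of $h$ gives $\Psi_\e(e_1)>\Psi_\e(e_3)$. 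Your version is exact rather than asymptotic, avoids the Taylor expansion bookkeeping and the transfer to $\phi_\e$, and is a bit cleaner; the paper's version yields as a byproduct the explicit leading-order profile of $w_\e$. (Your $\g_3$ is also obtained by a soft Neumann-series perturbation of the isomorphism $-(\Delta_{S^2}+2):E\to F$, whereas the paper gives the explicit threshold $\g_N=\min(\l_1/\mu_1,\inf_{k\geq 2}(\l_k-\l_1)/(\mu_k-\mu_1))$ in terms of the Funk--Hecke eigenvalues; both yield a positive constant.) What your proposal defers, and correctly flags as the analytic heart, is the proof that $\phi\mapsto\int_{B_\phi^3}G_\k(|\phi(\th)\th-y|)\,dy$ is a $C^\infty$ map $C^{2,\a}(S^2)\to C^{0,\a}(S^2)$ with the stated linearization; this is precisely the content of the paper's Appendix (Lemma \ref{Regularity -cF-0-lattice-sph-cyl} / Proposition \ref{Propo-reg-h}), proved by rotating the kernel to a fixed pole and controlling the $|\th-\s|^{-1}$ singularity together with the Faà di Bruno estimates.
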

The  constants $\g_N$, for $N=2,3,$ in Theorem \ref{th:latt-sph} and  Theorem \ref{th:latt-cyl} depend only on the eigenvalues of the linearized operator of the map  $w\mapsto \cH_{B^{N}_w}$, see    Lemma \ref{eq:Lambda-star-latt-sph} below.

In the case of near-sphere lattices also, we are   able to prove that $w_\e$ is not constant on $S^{N-1}$ only in the lower dimensional lattices, $M\leq 2$, see Section \ref{s:Remarks} below. It is an open question to know whether for   $M=3$, the function $w_\e$ is not constant i.e. $\cH_{\cB^\e_1}$ is not constant on $\de B_1$.
%

%
  Related to results in  the present  paper are  the  works, for  $\k=0$,    of  Ren and Wei    in their series of papers \cite{RW-1,RW-2,RW-3,RW-4,RW-5,RW-6,RW-7,RW-8,RW-9,RW-10,RW-11}. Among many others, they    build equilibrium patterns of type lamellar,
ring/cylindrical, and spot/spherical solutions.  See also \cite{KR-1, KR-2}   and the references therein. \\
Next, we emphasize some   similarities between  equilibrium patterns  derived here and  the structures of Constant Nonlocal Mean Curvatures (CNMC) sets recently discovered. The notion of nonlocal mean curvature was recently introduced by Caffarelli and Souganidis in \cite{Caff-Soug2010}. See Caffarelli, Roquejoffre, and Savin~\cite{Caffarelli2010},  it  arises from the  Euler-Lagrange equation for the fractional perimeter 
functional which in turn   is a  $\Gamma$-limit of some nonlocal phase transition problems, see \cite{SO}.\\
The long range interaction in the expression of $\cH_\O$  is, up to an additional constant,    a weighted average of all   "+1" in $\O$ and all   "-1" from outside $\O$,  since, 
\be\label{eq:def-cH-with -tau}
\cH_\O(x)=   H_{\de\O}(x)+\frac{\g}{2} \int_{\R^3}(1_{\O}(y)-1_{\R^3\setminus\O}(y))G_\k(|x-y|) dy+ \ov{c},
\ee
for $x\in \de\O$,
where $\ov{c}=\frac{1}{2}\int_{\R^3}G_\k(|x|)dx=\frac{2\pi}{\k^2}$. Similarly, for $\a\in (0,1)$,  the  fractional or nonlocal mean curvature function $\cH_\O^\a:\de\O\to \R$ is given by
$$
\cH_\O^\a(x):=- \frac{1-\a}{|B^2|} \int_{\R^3}(1_{\O}(y)-1_{\R^3\setminus\O}(y))|x-y|^{-3-\a} dy
$$
(positive if $\O$ is a ball).  
 The factor  $ \frac{1-\a}{|B^2|}$ implies that   $H_{\de \O}^\a(x)$ converges to the  mean curvature $ H_{\de \O}(x)$  as $\a\to1$, see    \cite{Davila2014B,Abatangelo}. 

 We can parallel the  recently constructed  CNMC sets  and  the equilibrium patterns    obtained here.
 Indeed,    by  \cite{Cabre2015A, CFW},  CNMC bands  exist, and from which 1-periodic CNMC slabs can be derived. Modulated CNMC slabs are found in \cite{MT}.  Finally from   \cite{Cabre2015B}, there exist     CNMC near-sphere lattices  from which CNMC lattices of  near-cylinders can be derived.  \\
Finally, we  point out that  resemblance between  phase structures in diblock copolymer melts,
and  triply periodic constant mean curvature  surfaces was also observed several years ago, see e.g. \cite{BF,TAHH}.\\

\section{Preliminary and notations}

We let $K_\nu$ be the second kind of  modified Bessel function of  order $\nu \in \R$. See e.g.
\cite{El},  for $\nu\in \R$,  there holds
 $$
K_\nu'(r)=-\frac{\nu}{r}K_\nu(r) -K_{\nu-1}(r)
$$
and $K_{-\nu}=K_{\nu}$ for $\nu<0$. 
Therefore 
\be \label{eq:deriv-K10}
K_0'(r)=-K_1(r), \qquad (rK_1)'(r)= -r K_{0}(r).
\ee     
See also
\cite{El},  for $\nu\geq 0$,
\be\label{eq:decKnuInf}
K_{\nu}(r)\sim \frac{\sqrt{\pi }}{\sqrt{2}} r^{-1/2}e^{-r} \qquad \textrm{  as
$r\to +\infty$} \ee 
and
\be\label{eq:decKnuInf0}
K_{0}(r)\sim -\log (r),  \qquad K_{\nu}(r)\sim \frac{1}{2}\G(\nu)   (r/2)^{-\nu} \qquad\textrm{  as
$r\to 0$.}  
\ee

 We now collect some useful   formula, where most of them can be found in \cite{GR}. In the following, we let $\a,\b,\d\geq 0$ and $\k>0$. 
By \cite[page 491, ET I 16(27)]{GR}, we have 
\be\label{eq:int-cos-G-k-sqrt}
 \int_{\R} \cos(\b  r) G_\k\left(\sqrt{r^2+\a ^2} \right)dr = 2  K_0(\a \sqrt{\k^2+\b^2}).
\ee
See also  \cite[page 719, WA 425(10)a, MO 48]{GR}, we have 
\be\label{eq:int-cos-K0}
\int_{\R}\cos(\b r) K_0(r\sqrt{\k^2+\a^2})dr= \frac{\pi}{\sqrt{\k^2+\b^2+\a^2  }}.
\ee
By computing, we have 
\be\label{eq:int-R2-G-k-sqrt}
 \int_{\R^2}   G_\k\left(|x|\right)dx=\frac{2\pi}{\k}.
\ee
From \cite[page 722, ET I 56(43)]{GR}, we get
 \be\label{eq:int-cos-K0-sqrt}
\int_{\R} \cos(\b r) K_0\left( \sqrt{r^2+\d^2}\sqrt{\a^2+\k^2} \right)  dr =   \pi\frac{\exp(-\d \sqrt{\b^2+\a^2 +\k^2})}{\sqrt{\b^2+\a^2 +\k^2}}.
 \ee
 Combining \eqref{eq:int-cos-G-k-sqrt} and  \eqref{eq:int-cos-K0-sqrt}, we obtain 
 \be\label{eq:int-G-k-R2-sqrt}
  \int_{\R^2}  G_\k\left(\sqrt{|x|^2+\d^2 }\right) dx =  2 \pi\frac{\exp(-\d \k) }{ \k}. 
 \ee

%
For a finite set $\cN$, we let $|\cN|$ denote the length (cardinal) of $\cN$. It will be understood that $ |\emptyset|=0$. 
Let $Z$ be a  Banach space and $U$ a nonempty open subset of $Z$. If $T \in C^{k}(U,\R)$ and $u \in U$, then $D^kT(u)$ is a continuous 
symmetric $k$-linear form on $Z$ whose norm is given by  
$$
   \|D^{k}T ({u}) \|= \sup_{{u}_{1}, \dots,  {u}_{k}\in Z }
     \frac{|D^{k} T ({u})[u_1,\dots,u_k]| }{   \prod_{j=1}^k \|   {u}_{j} \|_{ Z }}    .
$$
 If,  $L: Z\to \R$ is a linear map, we have 
\be \label{eq:Dk-LT2}
D^{|\cN|}(L T_2 )({u})[u_i]_{i\in\cN}= L({u})    D^{|\cN|} T_2({u})[u_i]_{i\in\cN} +  
\sum_{j\in\cN} L({u}_j)   D^{|\cN|-1} T_2({u})  [u_i]_{\stackrel{i \in \cN}{ i\neq j}}.
\ee
We let $T$ be as above, $V \subset \R$ open with $T(U) \subset V$ and $g:V  \to \R$ be a  $k$-times differentiable map.  The Fa\'{a} de Bruno formula states that 
\be 
\label{eq:Faa-de-Bruno}
D^k( g\circ T)(u)[u_1,\dots,u_k]= \sum_{\Pi\in\scrP_k} g^{ (\left|\Pi\right|)}(T(u)) \prod_{P\in\Pi} D^{\left|P\right| }T(u)[u_j]_{j \in P} ,
 \ee
for $u, u_1,\dots,u_k  \in U$, where $\scrP_k$ denotes the set of all partitions of  $\left\{ 1,\dots, k \right\}$, see e.g. \cite{FaadeBruno-JW}. 
 
\section{  Perodic slabs}\label{s:const-slab}

%
\subsection{Doubly periodic slabs}\label{ss:double-slabs}
In this section we construct equilibrium patterns  with period cell   given by a doubly periodic  slab. That is  domains of the form
$$
\O_\vp:=\left\{ (t, z)\in
\mathbb{R}^{2}\times \R \,:\, -\vp(t)<z<\vp(t)  \right\} \subset\R^3,
$$
where  $\vp: \R^2 \to (0,\infty)$, with $\vp\in C^{2,\a}(\R^2)$. Recall that   for $x\in \{-1,1\} $, the mean curvature is given by
$$
H_{\de\O_\vp}((t, \vp(t)x) )= -\div  \frac{\n \vp(t)}{\sqrt{1+|\n \vp(t)|^2}} .
$$
Moreover,  by a change of variable, the Yukawa interaction takes the form
\begin{align*}
&\int_{\O_\vp }G_\k(|(t, \vp(t)x)-X|) dX=\int_{\R^2} \int_{-\vp(s)}^{\vp(s)}   G_\k(|(t, \vp(t)x)-(s,z)|)dz ds\\
&= \int_{\R^2} \int_{-1}^1\vp(s) G_\k(|(t, \vp(t)x)-(s,\vp(s)y)|) dy ds\\
&=  \int_{\R^2} \int_{-1}^1\vp(s) G_\k(|(t, \vp(t))-(s,\vp(s)y)|) dy ds.
\end{align*}
%
%
Next, we define  the open set 
$$
\cO:=\{ \vp \in C^{2,\alpha}(\R^2)\,:\, \vp>0 \}
$$
and  the map $\cF: \cO\to C^{0,\a}(\R^2)$ by 
\begin{align}
&\cF(\vp)(t):= -\div \frac{\n \vp(t)}{\sqrt{1+|\n \vp(t)|^2}}   +\g  \int_{\R^2}\int_{-1 }^1\vp(s)G_\k(|(t, \vp(t) )-(s,\vp(s)y)|) dyds,\label{eq:dexp-cF-slab}
\end{align}
so that, for every $x\in \{-1,1\}$  and $t\in \R^2$, 
$$
\cH_{\O_\vp}((t,\vp(t)x))= \cF(\vp)(t).
$$
Our aim is to apply the Crandall-Rabinowitz bifurcation theorem to solve the equation
$$
\cF(\l+ u)=const
$$
for  $\l >0$ and $u\in C^{2,\a}(\R^2)$ is small. We will achieve this by solving the equation 
\be \label{eq:equation-to solve-slab}
\cF(\l+ u)=\cF(\l).
\ee
We note that by some computation and  \eqref{eq:int-G-k-R2-sqrt},    we have 
\be\label{eq:cF-lambda-eval}
 \cF(\l)= \g \l \int_{-1}^1\int_{\R^2}G_\k(\sqrt{|r|^2+\l^2(1-y)^2})dr dy=2\pi\g\,\textrm{arcsinh}(2\l/\k).
\ee
Let us study the linearized operator $D\cF(\l)$, for $\l>0$.
 We consider the map 
$$
\cO  \to  C^{0,\a}(\R^2),\qquad   \cF_1(\vp)(t):= \int_{\R^2}\vp(s) \int_{-1 }^1G_\k((|t-s|^2+(\vp(t) -\vp(s)y)^2)^{1/2}) dyds .
$$
We have the following result.
\begin{Lemma}\label{lem:Diff-of-cF-slab}
The map $\cF: \cO\to C^{0,\a}(\R^2)$ is smooth, with 
 $$
\|D^k \cF(\vp)\|\leq c (1+\|\vp\|_{C^{2,\a}(\R^2)})^c ,
$$
for some positive constant  $c=c(\k,\a,\d,k)$.
 Moreover for every $\vp\in \cO$  and $w\in C^{2,\a}(\R^2)$,
\begin{align}
\label{eq:DefcF1-slab-General}
D\cF_1(\vp)[w](t) 
&=  \int_{\R^2}  (w(t-r)-w(t))   G_\k((|r|^2+ ( \vp(t)  - \vp(t-r))^2)^{1/2})  dr\nonumber\\
&\quad+\int_{\R^2}  (w(t-r)  +w(t))G_\k((|r|^2+ ( \vp(t)  + \vp(t-r))^2)^{1/2}) dr .
\end{align}
In  particular, for every $\l>0$,
\begin{align} \label{eq:DcF-lamb-slab}
&D\cF(\l)[w](t) =-\D w(t)  - \g   \int_{\R^2} (w(t)-w(t-r)) G_\k\left(|r| \right)dr\nonumber \\
  &\quad +\g (  w(t)+w(t-r)) \int_{\R^2}  G_\k\left((|r|^2+4 \l^2 )^{1/2} \right) dr       .
\end{align}
\end{Lemma}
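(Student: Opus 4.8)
The plan is to write $\cF = \cF_{mc} + \g\,\cF_1$, where $\cF_{mc}(\vp)(t) = -\div\big(\n\vp(t)/\sqrt{1+|\n\vp(t)|^2}\big)$ is the mean curvature operator, and to handle the local term $\cF_{mc}$ and the Yukawa term $\cF_1$ separately before reassembling them. For $\cF_{mc}$ I would write it in non-divergence form $\cF_{mc}(\vp) = -\sum_{i,j}a_{ij}(\n\vp)\,\partial_{ij}\vp$ with $a_{ij}(p)=\d_{ij}(1+|p|^2)^{-1/2}-p_ip_j(1+|p|^2)^{-3/2}$. Since each $a_{ij}$ is $C^\infty$ on $\R^2$ with all derivatives bounded, the Nemytskii operator $\vp\mapsto a_{ij}(\n\vp)$ is smooth from $C^{2,\a}(\R^2)$ into $C^{1,\a}(\R^2)$ with the norms of its differentials bounded by $c(1+\|\vp\|_{C^{2,\a}})^{c}$, and multiplication by $\partial_{ij}\vp\in C^{0,\a}(\R^2)$ — which is affine in $\vp$ — preserves this structure. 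At $\vp\equiv\l$ we have $\n\vp\equiv 0$ and $a_{ij}(0)=\d_{ij}$, hence $D\cF_{mc}(\l)[w]=-\Delta w$, which yields the $-\Delta w$ term in \eqref{eq:DcF-lamb-slab}.

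For $\cF_1$ the crucial preparatory step is to carry out the inner $y$-integration first. After the change of variables $s=t-r$ and then the substitution $u=\vp(t)-y\,\vp(t-r)$ in the $y$-variable, whose Jacobian cancels the prefactor $\vp(t-r)$ and whose endpoints are $\vp(t)\mp\vp(t-r)$, one obtains
$$
\cF_1(\vp)(t)=\int_{\R^2}\Big(\int_{\vp(t)-\vp(t-r)}^{\vp(t)+\vp(t-r)} G_\k\big((|r|^2+u^2)^{1/2}\big)\,du\Big)\,dr .
$$
Writing $H(b):=\int_0^b G_\k\big((|r|^2+u^2)^{1/2}\big)\,du$ (an antiderivative in the inner variable, with $r$ fixed), this is $\int_{\R^2}\big[H(\vp(t)+\vp(t-r))-H(\vp(t)-\vp(t-r))\big]\,dr$. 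Since both endpoints are affine in $\vp$, $\vp$-differentiation only brings down derivatives of $H$: one finds that $D^k\cF_1(\vp)[w_1,\dots,w_k](t)$ equals the integral over $\R^2$ of $H^{(k)}(\vp(t)+\vp(t-r))\prod_{i=1}^k(w_i(t)+w_i(t-r))$ minus the analogous term with $\vp(t)-\vp(t-r)$ and $\prod_{i=1}^k(w_i(t)-w_i(t-r))$, where $H^{(k)}(b)=\partial_u^{k-1}G_\k\big((|r|^2+u^2)^{1/2}\big)\big|_{u=b}$; for $k=1$ this is exactly \eqref{eq:DefcF1-slab-General} (using $w(t-r)-w(t)=-(w(t)-w(t-r))$). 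The rigorous justification of these formulas, and the $C^\infty$-smoothness of $\cF_1$, then comes from dominated convergence applied to the difference quotients, together with the pointwise bounds described next and the corresponding Hölder estimates in $t$ of the remainders.

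The one genuine difficulty — the step I expect to be the main obstacle — is the integrability near the diagonal $r=0$ of the terms carrying the lower endpoint $\vp(t)-\vp(t-r)$, since $G_\k(\rho)\sim\rho^{-1}$ and every extra $u$-derivative in $H^{(k)}$ costs one more power of $\rho^{-1}$. This is exactly compensated by the regularity of the perturbations: near $r=0$ one has $\vp(t)-\vp(t-r)=O(|r|)$, so $(|r|^2+(\vp(t)-\vp(t-r))^2)^{1/2}\asymp|r|$, while each factor $w_i(t)-w_i(t-r)$ appearing alongside is also $O(|r|)$; hence the term with $k$ such factors is $O(|r|^{-k})\cdot O(|r|^{k})=O(1)$ near the origin, and the exponential decay of $G_\k$ and its derivatives handles $|r|\to\infty$. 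The terms carrying the upper endpoint $\vp(t)+\vp(t-r)\ge 2\d$ are harmless because there the argument of $G_\k$ stays $\ge 2\d$, where $G_\k$ together with all its derivatives is bounded; this is precisely where the dependence of $c$ on a lower bound $\d>0$ for $\vp$ enters, the dependence on $\|\vp\|_{C^{2,\a}}$ being polynomial, through the Hölder norms of the endpoints. Propagating the same bookkeeping through the Hölder modulus in $t$ gives the estimate $\|D^k\cF(\vp)\|\le c(1+\|\vp\|_{C^{2,\a}})^{c}$.

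Finally, putting $\vp\equiv\l$ in \eqref{eq:DefcF1-slab-General} the endpoints become $0$ and $2\l$, so
\begin{multline*}
D\cF_1(\l)[w](t)=\int_{\R^2}\big(w(t-r)-w(t)\big)G_\k(|r|)\,dr \\
+\int_{\R^2}\big(w(t-r)+w(t)\big)G_\k\big((|r|^2+4\l^2)^{1/2}\big)\,dr ,
\end{multline*}
and adding $D\cF_{mc}(\l)[w]=-\Delta w$ and multiplying the nonlocal part by $\g$ produces \eqref{eq:DcF-lamb-slab}; both integrals converge since $|w(t)-w(t-r)|\le\|\n w\|_\infty|r|$ kills the $\rho^{-1}$ singularity of $G_\k(|r|)$ and $G_\k\big((|r|^2+4\l^2)^{1/2}\big)\le G_\k(|r|)$.
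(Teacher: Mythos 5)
Your proof is correct, but it handles the regularity of the nonlocal term $\cF_1$ by a genuinely different (and arguably cleaner) route than the paper. The paper first differentiates $\cF_1$ once, integrates by parts in $y$ to land on \eqref{eq:DefcF1-slab-General}, and then rescales the kernel as $G_\k\bigl((|r|^2+(\vp(t)-\vp(t-r))^2)^{1/2}\bigr) = |r|^{-1}G_{\k|r|}\bigl((1+\L_0^2)^{1/2}\bigr)$ with $\L_0(\vp,t,r)=\int_0^1\n\vp(t-\varrho r)\cdot\tfrac{r}{|r|}\,d\varrho$; since $\vp\mapsto\L_0(\vp,t,r)$ is still nonlinear inside the kernel, the paper must invoke the Fa\'a de Bruno formula, together with the uniform bound $|h_r^{(k)}(x)|\le cG_{\k|r|}(1/2)$ for $h_r(x)=G_{\k|r|}((1+x^2)^{1/2})$ and the integrability of $r\mapsto |r|^{-1}G_{\k|r|}(1/2)$, to control all higher derivatives. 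You instead integrate out the $y$-variable at the level of $\cF_1$ itself via the substitution $u=\vp(t)-y\vp(t-r)$, obtaining $\cF_1(\vp)(t)=\int_{\R^2}\bigl[H_r(\vp(t)+\vp(t-r))-H_r(\vp(t)-\vp(t-r))\bigr]dr$ with $H_r(b)=\int_0^b G_\k((|r|^2+u^2)^{1/2})\,du$. Since the two endpoints are \emph{affine} in $\vp$, iterated differentiation is immediate — no Fa\'a de Bruno needed — and the higher derivatives come out as $H_r^{(k)}(B_\pm)\prod_i(w_i(t)\pm w_i(t-r))$. Both proofs rest on the same compensation mechanism near $r=0$ (the $|r|^{-k}$ singularity of $H_r^{(k)}$ at the lower endpoint is cancelled by the $k$ factors of order $|r|$, and exponential decay of $G_\k$ handles $|r|\to\infty$); your presentation just packages it more transparently. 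Your formulas for $D\cF_1$ in \eqref{eq:DefcF1-slab-General} and the specialization to $\vp\equiv\l$ in \eqref{eq:DcF-lamb-slab} agree with the paper, and your treatment of the mean-curvature part via the Nemytskii composition $a_{ij}(\nabla\vp)$ is the standard argument the paper calls easy. One small caveat, which you correctly flag but do not fully carry out: the quantitative bound $\|D^k\cF(\vp)\|\le c(1+\|\vp\|_{C^{2,\a}})^c$ requires propagating these pointwise estimates into the H\"older modulus in $t$, which (as in the paper) is where the bulk of the bookkeeping actually lives.
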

\begin{proof}
We notice that 
\begin{align}
&\cF(\vp)(t) =-\div \frac{\n \vp(t)}{\sqrt{1+|\n \vp(t)|^2}} \nonumber\\
&\quad  +\g  \int_{\R^2}\vp(s) \int_{-1 }^1G_\k((|t-s|^2+(\vp(t) -\vp(s)y)^2)^{1/2}) dyds.
\end{align}
The regularity of the map
$$
C^{2,\a}(\R^2)\to C^{0,\a}(\R^2),\qquad  \vp\mapsto  \div \frac{\n \vp(t)}{\sqrt{1+|\n \vp(t)|^2}} 
$$
is easy, we therefore consider the second term.  Fro every $\d>0$, we define 
$$
\cO_\d=\{ \vp\in C^{2,\a}(\R^2)\:\, \vp>\d\}.
$$
Let us first compute the formal derivative of this map which we will show that it is smooth. Once this is done we can easily deduce the regularity of $\cF_1$ and hence of $\cF$. We have 
\begin{align}\label{eq:DefcF1-slab-proof}
D\cF_1(\vp)[w](t)=\int_{\R^2}w(s) \int_{-1}^1f(y)dy ds+  \int_{\R^2} \int_{-1}^1 (w(s)y-w(t))  f'(y)dy ds,
\end{align}
where 
$$
f(y)=G_\k(|(t, \vp(t)  )-(s,\vp(s)y)|) 
$$
which satisfy 
$$
  f'(y)=\vp(s)(\vp(s)y- \vp(t)  ) G_\k'(|(t, \vp(t) )-(s,\vp(s)y)|) .
$$
Integration by parts give
\begin{align*}
\int_{-1}^1& (w(s)y-w(t) )   f'(y)dy=-\int_{-1}^1  w(s) f(y)dy+  (w(s)\th-w(t) )   f(\th)\Big|^{\th=1}_{\th=-1}.
\end{align*}
Inserting this in \eqref{eq:DefcF1-slab-proof}, we deduce that 
\begin{align*}
D\cF_1(\vp)[w](t)&=\int_{\R^2}  (w(s)\th-w(t))    f(\th)ds \Big|^{\th=1}_{\th=-1}\\
&=\int_{\R^2}  (w(s)-w(t))   f(1)ds-\int_{\R^2}  (-w(s)-w(t))   f(-1)ds\\
&=  \int_{\R^2}  (w(s)-w(t))   G_\k(|(t, \vp(t)  )-(s,\vp(s))|)  ds\\
&\quad+\int_{\R^2}  (w(s)+w(t))   G_\k(|(t, \vp(t)  )-(s,-\vp(s))|) ds.
\end{align*}
This gives \eqref{eq:DefcF1-slab-General} after the change of variable $s=t-r$. 
We now have 
\begin{align}
\label{eq:DefcF1-slab}
D\cF_1(\vp)[w](t) 
&=  \int_{\R^2}  (w(t-r)-w(t))   G_\k(|(|r|^2+ ( \vp(t)  - \vp(t-r))^2)^{1/2}|)  ds\nonumber\\
&\quad+\int_{\R^2}  (w(t-r) +w(t)) G_\k(|(|r|^2+ ( \vp(t)  + \vp(t-r))^2)^{1/2}|) ds\nonumber\\
%
%
&=  \int_{\R^2}  \frac{w(t-r)-w(t)}{|r|}   G_{{\k}{|r|}}\left(  \left(1+ \left(\L_0(\vp,t,r)\right)^2\right)^{1/2} \right)  dr\nonumber\\
&\quad+\int_{\R^2}  (w(t-r) +w(t))  G_\k((|r|^2+ ( \vp(t)  + \vp(t-r))^2)^{1/2}) dr,
%
\end{align}
where $\L_0: C^{2,\a}(\R^2)\times \R^4\to \R$ is given by 
$$
\L_0(\vp,t,r)= \int_0^1\n\vp(t-\varrho r )\cdot \frac{r}{|r|} d\varrho.
$$
For every fixed $w\in C^{2,\a}(\R^2)$, we can define $\cJ_i: \cO_\d \to L^\infty(\R^2)$ by 
$$
\cJ_1(\vp )(t):=   \int_{\R^2}  \frac{w(t-r)-w(t)}{|r|}   G_{{\k}{|r|}}\left(\left| \left(1+ \left(\L_0(\vp,t,r)\right)^2\right)^{1/2}\right|\right)  dr
$$
and 
$$
\cJ_2(\vp)(t):= \int_{\R^2}  (w(t-r) +w(t))  G_\k(|(|r|^2+ ( \vp(t)  + \vp(t-r))^2)^{1/2}|) dr.
$$
Therefore by \eqref{eq:DefcF1-slab}
\be\label{eq:cFeq-cJ-is}
 D\cF_1(\vp)[w]= \cJ_1(\vp )+\cJ_2(\vp ).
\ee
Thanks to \eqref{eq:Dk-LT2} and the fact that the integrand in $\cJ_2$ is bounded away from zero, it becomes straightforward to see that  the map $\cJ_2: \cO_\d\to C^{0,\a}(\R^2)$,  is smooth  and for every $k\in \N$, 
 \be \label{eq:Dk-J-i}
\|D^k \cJ_2(\vp)\|\leq c (1+\|\vp\|_{C^{2,\a}(\R^2)})^c\,\|w\|_{C^{2,\a}(\R^2) },
 \ee
 where $c=c(\k,\a,\d,k)>0$. Next, we prove    the regularity of  $\cJ_1: \cO_\d\to C^{0,\a}(\R^2)$.  For $|r|>0$,    we consider the function 
 $$
 h_r(x)= G_{{\k}{|r|}}\left( \left(1+x^2\right)^{1/2} \right).
 $$
 We  observe  for every $k\in \N$, there exists a constant $c=c(\k,k)$ such that 
 $$
 |h^{(k)}_{r}(x)|\leq c\,    G_{{\k}{|r|}}(1/2)\qquad \textrm{ for every $x\in \R$, $r\in \R^{2}\setminus\{0\}$}. 
 $$
 Letting  $\cK_1= C^{2,\a}(\R^2)\times \R^4\to \R$, given by 
 $$
 \cK_1(\vp,t,r):= G_{{\k}{|r|}}\left(\left| \left(1+ \left(\L_0(\vp,t,r)\right)^2\right)^{1/2}\right|\right).
 $$
 We can use \eqref{eq:Faa-de-Bruno}, $g(x)=h_{r}(x)$ and $ T(\vp)= \cK(\vp,t,r)$ to get the following estimates. For every $(t,r)\in \R^2\times \R^2$,
 $$
\| D^k_\vp \cK_1(\vp,t,r)\|\leq c (1+\|\vp\|_{2,\a})^cG_{{\k}{|r|}}(1/2) ,
 $$
 with $c=c(\a,\k,k)>0$. Since also $\int_{\R^2}|r|^{-1} G_{{\k}|r|}(1/2)<\infty$, by  using  \eqref{eq:Dk-LT2}, we get, for every $k\in \N$, 
$$
\|D^k \cJ_1(\vp)\|\leq c (1+\|\vp\|_{C^{2,\a}(\R^2)})^c  \, \|w\|_{C^{2,\a}(\R^2) },
$$
 with $c=c(\k,\a,\d,k)>0$.  This then shows that   $\cJ_1: \cO_\d\to C^{0,\a}(\R^2)$.
 
 From this together with  \eqref{eq:Dk-J-i} and \eqref{eq:cFeq-cJ-is}, we deduce that $D\cF_1: \cO_\d\to \cB(C^{2,\a}(\R^2), C^{0,\a}(\R^2) ) $ is smooth. Hence $\cF$ is smooth in $\cO_\d$ and  
 $$
\|D^k \cF_1(\vp)\|\leq c (1+\|\vp\|_{C^{2,\a}(\R^2)})^c , \qquad  \textrm{ for some positive constant  $c=c(\k,\a,\d,k)$.} 
$$
In addition, we have that 
\begin{align*}
D^k\cF_1(\vp)[w](t) 
&=  \int_{\R^2}  \frac{w(t-r)-w(t)}{|r|}   D^k_\vp \cK_1(\vp,t,r) dr\nonumber\\
&\quad+\int_{\R^2}  (w(t-r) +w(t))  D^k_\vp \cK_2(\vp,t,r) dr,
\end{align*}
where $\cK_2(\vp,t,r)= G_\k(|(|r|^2+ ( \vp(t)  + \vp(t-r))^2)^{1/2}|)$.
 Since $\d$ was arbitrarily chosen, this proves  the smoothness of $\cF_1$ on $\cO$. 

Finally, letting $\vp\equiv \l$ in \eqref{eq:DefcF1-slab}, we obtain \eqref{eq:DcF-lamb-slab}.
\QED
\end{proof}
  We now study the spectral properties of $D\cF(\l)$.
\begin{Lemma}\label{lem:def-linearized-eigen-slab}
 For every $\l>0$ we define the linear operator 
 \be \label{eq:def-L-lamb-slab}
 L_\l:=D\cF(\l) : C^{2,\a}_{p,\cR}\to C^{0,\a}_{p,\cR}.
 \ee
 Then the functions $e_k\in C^{2,\a}_{p,\cR}$, given by 
 \be \label{eq:eigen-fonc-slab}
 e_k(t):=\frac{1}{\pi}\prod_{i=1}^2\cos(k_i t_i),
 \ee
 for $k\in \N^2$,
 are the    eigenfunctions of $L_\l$ and moreover
 \be \label{eq:Ll-eigen-slab}
 L_\l(e_k)=\s_{\l,\g}(|k|) e_k,
 \ee 
where $|k|=\sqrt{k_1^2+k_2^2}$ and  for every $\ell\in \N$, 
\begin{align}\label{eq:eqgien-slab}
\s_{\l,\g}(\ell)
  &= \ell^2 - 2\pi \g  \left\{  \frac{1}{\k}-  \frac{1}{\sqrt{\k^2+\ell^2  }} -  \frac{\exp(-2\l\sqrt{\k^2+\ell^2  })}{  \sqrt{\k^2 +\ell^2 }} -  \frac{\exp(-2\l\k )}{  \k} \right\}  .
\end{align}
 In particular,

  \be\label{eq:monoton-lambda-slab} 
   \de_\l   \s_{\l,\g}(1)<0   \qquad\textrm{  for every $\g,\l>0$  }
   \ee
   and  
 \be\label{eq:asymp-sigk-slab}
 \lim_{\ell\to \infty}  \frac{ \s_{\l,\g}(\ell)}{\ell^2}= 1.
 \ee
\end{Lemma}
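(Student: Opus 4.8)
The plan is to compute $L_\l(e_k)$ directly from the explicit formula \eqref{eq:DcF-lamb-slab} for $D\cF(\l)$, exploiting that each $e_k$ is a product of cosines, hence a Laplace eigenfunction, and that the two remaining integral terms are convolution-type operators against radial kernels. First I would recall that $-\D e_k = |k|^2 e_k$, which produces the $\ell^2$ term with $\ell=|k|$. For the nonlocal terms, the key observation is that for a radial kernel $F(|r|)$ one has
$$
\int_{\R^2}\bigl(e_k(t)-e_k(t-r)\bigr)F(|r|)\,dr = e_k(t)\int_{\R^2}\bigl(1-\cos(k\cdot r)\bigr)F(|r|)\,dr,
$$
using the product-to-sum identities for $\cos(k_i(t_i-r_i))$ and the fact that the odd (sine) contributions integrate to zero by symmetry of $F(|r|)$; similarly $\int_{\R^2}(e_k(t)+e_k(t-r))F(|r|)\,dr = e_k(t)\int_{\R^2}(1+\cos(k\cdot r))F(|r|)\,dr$. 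Thus $e_k$ is indeed an eigenfunction, with eigenvalue
$$
|k|^2 - \g\int_{\R^2}(1-\cos(k\cdot r))G_\k(|r|)\,dr + \g\int_{\R^2}(1+\cos(k\cdot r))G_\k\bigl((|r|^2+4\l^2)^{1/2}\bigr)\,dr.
$$

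The main computational task is then to evaluate these four scalar integrals and check they assemble into \eqref{eq:eqgien-slab} with $\ell=|k|$. I would pass to a coordinate system in which $k\cdot r = |k| r_1$, write $dr = dr_1\,dr_2$, and integrate in $r_2$ first using the preliminary formulas: \eqref{eq:int-R2-G-k-sqrt} gives $\int_{\R^2}G_\k(|r|)\,dr = 2\pi/\k$; formula \eqref{eq:int-cos-G-k-sqrt} (with $\a=|r_2|$, and noting $G_\k(\sqrt{r_1^2+r_2^2})$) followed by \eqref{eq:int-cos-K0} yields $\int_{\R^2}\cos(|k|r_1)G_\k(|r|)\,dr = 2\pi/\sqrt{\k^2+|k|^2}$; for the shifted kernel, \eqref{eq:int-G-k-R2-sqrt} gives $\int_{\R^2}G_\k(\sqrt{|r|^2+4\l^2})\,dr = 2\pi e^{-2\l\k}/\k$, and combining \eqref{eq:int-cos-G-k-sqrt} with \eqref{eq:int-cos-K0-sqrt} (with $\d=2\l$) gives $\int_{\R^2}\cos(|k|r_1)G_\k(\sqrt{|r|^2+4\l^2})\,dr = 2\pi e^{-2\l\sqrt{\k^2+|k|^2}}/\sqrt{\k^2+|k|^2}$. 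Substituting the four values and collecting terms produces exactly the bracketed expression in \eqref{eq:eqgien-slab}, where the $\frac1\k$ and $-\frac{1}{\sqrt{\k^2+\ell^2}}$ come from the first (difference) integral and the two exponential terms from the second. Since $\ell\mapsto\sigma_{\l,\g}(\ell)$ depends on $k$ only through $|k|$, formula \eqref{eq:Ll-eigen-slab} follows, and the normalization $\frac1\pi$ in \eqref{eq:eigen-fonc-slab} is the one making $\{e_k\}$ orthonormal in $L^2([-\pi,\pi]^2)$ — not needed for the eigenvalue computation but consistent with later use.

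For the remaining two assertions: \eqref{eq:asymp-sigk-slab} is immediate since the bracket in \eqref{eq:eqgien-slab} tends to $\frac1\k - 0 - 0 - \frac{e^{-2\l\k}}{\k}$, a finite constant, as $\ell\to\infty$, so $\sigma_{\l,\g}(\ell)/\ell^2\to1$. For the monotonicity \eqref{eq:monoton-lambda-slab} I would differentiate $\sigma_{\l,\g}(1)$ in $\l$: only the two exponential terms depend on $\l$, and
$$
\de_\l\sigma_{\l,\g}(1) = -2\pi\g\left\{\frac{2\l\sqrt{\k^2+1}\,e^{-2\l\sqrt{\k^2+1}}}{\sqrt{\k^2+1}} + \frac{2\l\k\,e^{-2\l\k}}{\k}\right\} = -4\pi\g\l\left(e^{-2\l\sqrt{\k^2+1}} + e^{-2\l\k}\right) < 0
$$
for all $\g,\l>0$. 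The only point requiring a little care is the sign and placement of the two nonlocal contributions — I would double-check that the difference term $\int(1-\cos)$ contributes $+\g\bigl(\frac1\k-\frac1{\sqrt{\k^2+\ell^2}}\bigr)$ with the minus sign in front (so it enters the bracket with a plus), and that the shifted-kernel term $\int(1+\cos)$ contributes $+\g\bigl(\frac{e^{-2\l\k}}{\k}+\frac{e^{-2\l\sqrt{\k^2+\ell^2}}}{\sqrt{\k^2+\ell^2}}\bigr)$, which enters the bracket with a minus; a sign slip here would be the only real pitfall, since everything else is a mechanical application of the quoted integral identities. \QED
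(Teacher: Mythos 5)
Your proposal takes essentially the same route as the paper: apply the explicit linearization \eqref{eq:DcF-lamb-slab}, observe the convolution structure so each $e_k$ is an eigenfunction (the sine contributions vanish after integration against the radial kernels), and evaluate the resulting scalar integrals via \eqref{eq:int-cos-G-k-sqrt}, \eqref{eq:int-cos-K0}, \eqref{eq:int-G-k-R2-sqrt}, and \eqref{eq:int-cos-K0-sqrt}; the only difference---rotating coordinates so that $k\cdot r=|k|r_1$, versus the paper's iterated one-dimensional integrals keeping $\prod_i\cos(k_ir_i)$---is cosmetic and leads to the same four values. One small computational slip in your check of \eqref{eq:monoton-lambda-slab}: differentiating $e^{-2\l\sqrt{\k^2+1}}$ in $\l$ produces $-2\sqrt{\k^2+1}\,e^{-2\l\sqrt{\k^2+1}}$ with no extra factor of $\l$, so the correct identity is $\de_\l\s_{\l,\g}(1)=-4\pi\g\bigl(e^{-2\l\sqrt{\k^2+1}}+e^{-2\l\k}\bigr)$; the sign, and hence the conclusion, is unaffected.
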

\begin{proof}
  By direct computations using Fourier decomposition, we have 
 \begin{align}\label{eq:exp-intial-sigma-l-g}
 \s_{\l,\g}(|k|)
  &= |k|^2 - \g    \int_{\R^2} \left(1- \prod_{i=1}^2\cos(k_ir_i) \right) G_\k\left(|r| \right)dr \nonumber\\
  &\quad + \g      \int_{\R^2}\left(1+\prod_{i=1}^2 \cos(k_i r_i)\right) G_\k\left((|r|^2+4 \l^2 )^{1/2} \right) dr   .
\end{align} 
By \eqref{eq:int-cos-G-k-sqrt}, we have 
$$
\int_{\R}\cos(k_2 r_2) \int_{\R} \cos(k_1 r_1) G_\k\left((r^2_1+r_2^2)^{1/2} \right)dr_1 dr_2= 2\int_{\R}\cos(k_2 r_2) K_0(r_2\sqrt{\k^2+k^2_1})dr_2
$$
and by \eqref{eq:int-cos-K0}
$$
\int_{\R}\cos(k_2 r_2) K_0(r_2\sqrt{\k^2+k^2_1})dr_2= \frac{2\pi}{2\sqrt{\k^2+|k|^2  }}.
$$
We then deduce that 
$$
\int_{\R}\cos(k_2 r_2) \int_{\R} \cos(k r_1) G_\k\left((r^2_1+r_2^2)^{1/2} \right)dr_1 dr_2= \frac{2\pi}{\sqrt{\k^2+|k|^2  }}.
$$
Moreover, taking $k=0$ in the above expression, we have 
$$
 \int_{\R^2}   G_\k\left(|r|\right)dr=\frac{2\pi}{\k}.
$$
From the  two qualities above, we deduce that
\be\label{eq:frst-exp} 
 \int_{\R^2}\left(1- \prod_{i=1}^2 \cos(k_ir_i)\right) G_\k\left(|r| \right)dr=\frac{2\pi}{\k}-  \frac{2\pi}{\sqrt{\k^2+|k|^2  }}.
\ee
By \eqref{eq:int-cos-G-k-sqrt} and \eqref{eq:int-cos-K0-sqrt}, we get
 \be\label{eq:sec-exp} 
 \int_{\R^2}\prod_{i=1}^2 \cos(k_i r_i) G_\k\left((|r|^2+4 \l^2 )^{1/2} \right) dr=  2 \pi\frac{\exp(-2\l\sqrt{|k|^2+\k^2})}{\sqrt{|k|^2+\k^2}}
 \ee
and letting $k=0$, we have 
$$
  \int_{\R^2}  G_\k\left((|r|^2+4 \l^2 )^{1/2} \right) dr =  2 \pi\frac{\exp(-2\l \k) }{ \k}. 
$$
Using this with \eqref{eq:frst-exp} and \eqref {eq:sec-exp}  in  \eqref{eq:exp-intial-sigma-l-g}, we get the desired expression of $\s_{\l,\g}(|k|)$ in \eqref{eq:eqgien-slab}.
\QED
\end{proof}
Next, we give some qualitative properties of $\s_{\l,\g} $ .

\begin{Lemma}\label{lem:Lambda-star-slab}
Let  $\k>0$ and let $\g$ be such that     
\be \label{eq:restric-gamma-slab}
 \frac{1}{2\pi} \frac{1}{  \frac{1}{\k}-  \frac{1}{\sqrt{\k^2+1 }}  }<  \g <  \frac{1}{2\pi} \frac{   1}{  \frac{1}{(\k^2+1 )^{3/2}}  }.
\ee
Then 
there exists a  unique $\l_*=\l_*(\g,\k)>0$ such that
  \be \label{eq:s-eq0-slab}
  \s_{\l_*,\g}(1)=0 ,
  \ee
\be \label{eq:sig-l-st-g-at-o-posi}
\s_{\l_*,\g}(0)=\frac{4\pi\g \exp(-2\l_* \k)}{\k}>0
\ee
and 
\be \label{eq:sk-neq-0-slab}
 \s_{\l_*,\g}(\ell+1)> \s_{\l_*,\g}(\ell) \qquad \textrm{ for every $\ell\geq 1  $}.
 \ee
\end{Lemma}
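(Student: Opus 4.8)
The plan is to derive all three assertions directly from the closed form \eqref{eq:eqgien-slab} of $\s_{\l,\g}$, establishing the existence and uniqueness of $\l_*$ first and then reading off the remaining two properties at that value. For \eqref{eq:s-eq0-slab} I would fix $\ell=1$ and regard $g(\l):=\s_{\l,\g}(1)$ as a function of one real variable on $(0,\infty)$. It is continuous, and by \eqref{eq:monoton-lambda-slab} it is strictly decreasing, so it suffices to examine its two boundary behaviours. As $\l\to0^+$ the two exponentials in \eqref{eq:eqgien-slab} tend to $1$, the brace tends to $-2/\sqrt{\k^2+1}$, and hence $g(\l)\to 1+4\pi\g/\sqrt{\k^2+1}>0$. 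As $\l\to+\infty$ the exponentials vanish and $g(\l)\to 1-2\pi\g\big(\tfrac1\k-\tfrac1{\sqrt{\k^2+1}}\big)$, which is strictly negative precisely because of the lower bound on $\g$ in \eqref{eq:restric-gamma-slab}. The intermediate value theorem together with strict monotonicity then produces a unique $\l_*>0$ with $g(\l_*)=0$.

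Assertion \eqref{eq:sig-l-st-g-at-o-posi} is then immediate from \eqref{eq:eqgien-slab}: substituting $\ell=0$ the terms $\tfrac1\k$ and $\tfrac1{\sqrt{\k^2+\ell^2}}$ cancel and the two exponential terms coincide, so that $\s_{\l,\g}(0)=\tfrac{4\pi\g}{\k}e^{-2\l\k}$ for every $\l>0$; in particular this is positive at $\l=\l_*$.

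For the monotonicity \eqref{eq:sk-neq-0-slab} I would prove the stronger statement that the function $x\mapsto\s_{\l,\g}(x)$ obtained from \eqref{eq:eqgien-slab} by letting $\ell$ range over real $x\ge1$ is strictly increasing, for every $\l>0$; this obviously implies \eqref{eq:sk-neq-0-slab} for all integers $\ell\ge1$. Writing $u=\sqrt{\k^2+x^2}$ and differentiating, a short computation gives
$$
\partial_x\s_{\l,\g}(x)=2x\left(1-\frac{\pi\g}{u^3}\Big(1+e^{-2\l u}(2\l u+1)\Big)\right).
$$
The elementary inequality $e^{-t}(1+t)\le1$ for $t\ge0$, strict for $t>0$, gives $1+e^{-2\l u}(2\l u+1)<2$, while $u^3=(\k^2+x^2)^{3/2}\ge(\k^2+1)^{3/2}$ for $x\ge1$; hence
$$
\partial_x\s_{\l,\g}(x)>2x\left(1-\frac{2\pi\g}{(\k^2+1)^{3/2}}\right)\ge0,
$$
the last inequality being exactly the upper bound on $\g$ in \eqref{eq:restric-gamma-slab}.

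The main technical point is this last step: the derivative computation together with the two pointwise bounds that force $\partial_x\s_{\l,\g}>0$ on $[1,\infty)$. Everything else is soft: the existence of $\l_*$ is a monotonicity-plus-continuity argument once \eqref{eq:monoton-lambda-slab} is available, and \eqref{eq:sig-l-st-g-at-o-posi} is a one-line substitution. Note that both \eqref{eq:sig-l-st-g-at-o-posi} and \eqref{eq:sk-neq-0-slab} are in fact valid for every $\l>0$ under the hypothesis \eqref{eq:restric-gamma-slab}, and only \eqref{eq:s-eq0-slab} singles out the value $\l_*$.
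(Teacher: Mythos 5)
Your proof is correct and follows essentially the same route as the paper: limiting values of $\s_{\l,\g}(1)$ at $\l\to0^+$ and $\l\to\infty$ together with the monotonicity \eqref{eq:monoton-lambda-slab} give the unique zero $\l_*$; substitution at $\ell=0$ gives \eqref{eq:sig-l-st-g-at-o-posi}; and \eqref{eq:sk-neq-0-slab} is reduced to showing that the extension $x\mapsto\s_{\l,\g}(x)$ has positive $x$-derivative on $[1,\infty)$. The only cosmetic difference is in how the derivative bound is closed: you invoke the inequality $e^{-t}(1+t)\le 1$, whereas the paper argues that the bracketed function $u^{-3}+u^{-3}e^{-2\l u}+2\l u^{-2}e^{-2\l u}$ is monotone decreasing in $\l$ and $x$ and then evaluates the supremum at $\l\to0^+$, $x=1$; these are two phrasings of the same pointwise estimate and both are sound.
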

\begin{proof}
We first note that, by elementary calculations, the strict inequality
$$
\frac{1}{  \frac{1}{\k}-  \frac{1}{\sqrt{\k^2+1 }}  }<  \frac{   1}{  \frac{1}{(\k^2+1 )^{3/2}}  } 
$$
 is equivalent to 
$$
(\k^2+1)^2-(\k^4+2\k^2)>0,
$$
which holds true for every $\k>0$. Therefore we can always pick $\g$ as in \eqref{eq:restric-gamma-slab}.\\

In view of \eqref{eq:eqgien-slab}, we have 
 $$
\s_{\l,\g}(1)
  = 1 - 2\pi \g  \left\{  \frac{1}{\k}-  \frac{1}{\sqrt{\k^2+1  }} - \frac{\exp(-2\l\sqrt{\k^2+1}) }{ \sqrt{\k^2+1}} - \frac{\exp(-2\l\k) }{ \k}\right\}  .
  $$
It is easy to see that 
$$
\lim_{\l\to 0}\s_{\l,\g}(1)= 1+  \frac{4\pi \g }{\sqrt{\k^2+1  }}>0.
$$
Now if $\g$ satisfies the lower bound in  \eqref{eq:restric-gamma-slab}, we have that  
$$
 \lim_{\l\to +\infty}\s_{\l,\g}(1)= 1 - 2\pi \g  \left\{ \frac{1}{\k}-   \frac{1}{\sqrt{\k^2+1  }}   \right\}  <0.
$$ 
Therefore  \eqref{eq:s-eq0-slab} follows, for  a unique $\l_*=\l_*(\g,\k)$ thanks to  \eqref{eq:monoton-lambda-slab}. We now prove  \eqref{eq:sk-neq-0-slab}, where the assumption  on the  upper bound in \eqref{eq:s-eq0-slab} will be used. 

Indeed,  we consider the smooth function $g_{\l,\g}:[1,\infty)\to \R$, given by
$$
g_{\l,\g}(x)=  x^2 - 2\pi \g  \left\{  \frac{1}{\k}-  \frac{1}{\sqrt{\k^2+x^2  }} - \frac{\exp(-2\l\sqrt{\k^2+x^2}) }{ \sqrt{\k^2+x^2}} - \frac{\exp(-2\l\k) }{ \k}\right\}  .
$$
Then 
$$
g'_{\l,\g}(x)=2x\left (1-  \frac{\pi \g }{(\k^2+x^2  )^{3/2}}  -   \frac{\pi \g \exp(-2\l\sqrt{\k^2+x^2})}{(\k^2+x^2   )^{3/2}}-  \frac{2\pi \g \l \exp(-2\l\sqrt{\k^2+x^2})}{\k^2+x^2    } \right).
$$
We want $g'_{\l,\g}(x)>0$   for every $\l>0$ and $x\geq 1$, which holds true  provided
$$
 \pi \g <\frac{   1}{\sup_{\stackrel{\l>0}{ x\geq 1}}  \left(\frac{1 }{(\k^2+x^2  )^{3/2}}  +   \frac{  \exp(-2\l\sqrt{\k^2+x^2})}{(\k^2+x^2   )^{3/2}}+  \frac{  2\l \exp(-2\l\sqrt{\k^2+x^2})}{\k^2+x^2    }  \right)}.
$$
But   since the (maximized) function in the denominator  is decreasing in $x$ and $\l$, this is equivalent to 
$$
 \pi \g <\frac{   1}{  \frac{2}{(\k^2+1 )^{3/2}}  }
$$
which is our assumed  upper bound   in  \eqref{eq:restric-gamma-slab}.
It is now clear hat  as long as  $\g$ satisfies  \eqref{eq:restric-gamma-slab},   we have  that  $g'_{\l,\g}(x)>0$  for every $\l>0$ and $x\geq 1$.   This  completes the proof of   \eqref{eq:sk-neq-0-slab}  since  $\s_{\l,\g}(\ell)=g_{\l,\g}(\ell)$ for every $\ell\geq 1$.
\QED
\end{proof}
We will consider the subspaces of  Sobolev spaces which are invariant under the rotations   
$$
\cR(2):=\left\{\left( \begin{array}{cc}
0&1\\
1&0
\end{array}\right), 
\left( \begin{array}{cc}
-1&0\\
0&1
\end{array}\right)   \right\}. 
$$
In the following, for $j\in \N$, 
we define
\begin{equation*}
  \label{eq:def-hpe}
H^j_{p,\cR} := \Bigl \{v \in H^{j}_{loc}(\R^2) \::\: \text{$v$ is 
$2\pi\Z^2$-periodic  and $v(M t)=v(t)$,  $\,M\in\cR(2),\, t\in \R^2$}\Bigl \}  
\end{equation*}
and we put $L^2_{p,\cR}:= H^{0}_{p,\cR}$. Note that $L^2_{p,\cR}$ is a Hilbert space with scalar product
$$
(u,v) \mapsto \langle u,v \rangle_{L^2([-\pi,\pi]^2)} := \int_{[-\pi,\pi]^2} u(t)v(t)\,dt \qquad \text{for $u,v \in L^2_{p,\cR}$.}
$$
 Then   the functions $e_{k}$ in \eqref{eq:eigen-fonc-slab} form an orthonormal basis for $L^2_{p,\cR}$. Moreover, 
$H^j_{p,\cR} \subset L^2_{p,\cR}$ is characterized as the subspace of all functions $v \in L^2_{p,\cR}$ such that
$$
\sum_{k \in \N  ^2} (1+|k|^2)^{j} \langle v, e_k \rangle_{L^2([-\pi,\pi]^2)} ^2 < \infty.
$$
Therefore, $H^j_{p,\cR}$ is also a Hilbert space with scalar product
\begin{equation*}
  \label{eq:scp-hj}
(u,v) \mapsto \langle u,v \rangle_{H^j_{p,\cR}} :=  \sum_{k \in \N^2} (1+|k|^2)^{j} \langle u, e_k \rangle_{L^2([-\pi,\pi]^2)}  \langle v, e_k \rangle_{L^2([-\pi,\pi]^2)}  \qquad \text{for $u,v \in H^j_{p,\cR}$.}
\end{equation*}
We  consider   the eigenscapces corresponding to $\s_{\l,\g}(\ell)$, given by  
\begin{equation}
  \label{eq:def-Vell}
V_\ell:= \textrm{span}\{ e_k \;:\; |k|=\ell \}  
\end{equation}
 and $P_\ell: L^2_{p,\cR}\to  V_\ell $ be the $L^2$-orthogonal projection on $V_\ell$. We will  denote by $V_\ell^\perp$ to $L^2$-complement of $V_\ell$, which is given by
 $$
 V_\ell^\perp:=\{v\in V_\ell\,:\, P_\ell v=0\}.
 $$
Obviously, the direct-sum  $ \bigoplus_{\ell\in\N}V_\ell$ provides  a complete   decomposition  of $L^2_{p,\cR}$. More precisely for any $v\in L^2_{p,\cR}$, we can write it as 
$$
v=\sum_{\ell\in \N} P_\ell v.
$$
 Moreover, by \eqref{eq:Ll-eigen-slab}, we have  
\be\label{eq:-Lv-Fpurier-slab}
 L_\l(v) =\s_{\l,\g}(\ell) v \qquad \textrm{ for all $v\in V_\ell$}. 
\ee
It is easy to see that 
\be\label{eq:V_1-cap-XcP}
 V_1\cap C^{0,\a}_{p,\cR} =\textrm{span}\{\ov{v}\}=\R \ov v\qquad   \textrm{ with }\,\,\,\ov{v}(t_1,t_2)= \cos(t_1) + \cos(t_2).
\ee
We also consider the nonlinear operator $\cF$ defined in \eqref{eq:dexp-cF-slab}, and we note that $\cF$ maps $\cO \cap C^{2,\a}_{p,\cR}$ into $C^{0,\a}_{p,\cR}$. Consider the open set
$$
\cU:= \{(\lambda,u) \in \R \times C^{2,\a}_{p,\cR} \::\: \lambda>0, \: u> - \lambda \}  \subset \R \times C^{2,\a}_{p,\cR}.
$$
The proof of  our main theorem will be completed by applying the Crandall-Rabinowitz bifurcation theorem to the smooth nonlinear operator
\be \label{eq:def--G}
\cG: \cU \subset \R \times C^{2,\a}_{p,\cR}  \to C^{0,\a}_{p,\cR}, \qquad \cG(\lambda,u) =
\cF(\lambda+u)-\cF(\l).
\ee

We have the following

\begin{Proposition}\label{propPhi}
Let $\g$ satisfy \eqref{eq:restric-gamma-slab}. 
Then there exists a unique $\lambda_*=\l_*(\g,\k)>0$ such that  
the linear operator $  L_{\l_*}  = D_u \cG(\l_*,0) $
has the following properties.
\begin{itemize}
\item[(i)] The kernel $N(L_{\l_*} )$ of $L_{\l_*} $ is  given by 
  \begin{equation}
    \label{eq:def-v-0}
N(L_{\l_*} )=V_1=\R\ov{v} , \qquad \ov{v}(t_1,t_2)= \cos(t_1) + \cos(t_2).
  \end{equation}
\item[(ii)] The range $R(L_{\l_*} )$ of $L_{\l_*} $ is given by
$$
R(L_{\l_*} )=C^{0,\a}_{p,\cR}\cap V_1^\perp .
$$
%
\item[(iii)]Finally, 
\begin{equation}
  \label{eq:transversality-cond}
\partial_\lambda \Bigl|_{\lambda= \lambda_*} L_\lambda\ov{v}=\partial_\lambda \Bigl|_{\lambda= \lambda_*} \s_{\l,\g}(1)\ov{v}  \;\not  \in \; R(L_{\l_*}).
\end{equation}
\end{itemize}

\end{Proposition}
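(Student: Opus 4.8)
The strategy is to extract all three statements from the explicit diagonalization of $L_\l=D\cF(\l)$ in Lemma~\ref{lem:def-linearized-eigen-slab} together with the sign information on $\s_{\l_*,\g}$ supplied by Lemma~\ref{lem:Lambda-star-slab}. First I would record that $D_u\cG(\l,0)=D\cF(\l)=L_\l$ by \eqref{eq:def--G}, and let $\l_*=\l_*(\g,\k)$ be the number produced by Lemma~\ref{lem:Lambda-star-slab}; its uniqueness is the uniqueness in \eqref{eq:s-eq0-slab}, which rests on the strict monotonicity \eqref{eq:monoton-lambda-slab}. For part (i), I would take $u\in C^{2,\a}_{p,\cR}\subset L^2_{p,\cR}$ with $L_{\l_*}u=0$, expand $u=\sum_\ell P_\ell u$, and use \eqref{eq:Ll-eigen-slab} (equivalently \eqref{eq:-Lv-Fpurier-slab}) to get $\s_{\l_*,\g}(\ell)\,P_\ell u=0$ for every $\ell$ appearing in the decomposition $L^2_{p,\cR}=\bigoplus_\ell V_\ell$. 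By \eqref{eq:sig-l-st-g-at-o-posi}, $\s_{\l_*,\g}(0)>0$; by \eqref{eq:s-eq0-slab}, $\s_{\l_*,\g}(1)=0$; and since $\ell\mapsto\s_{\l_*,\g}(\ell)$ coincides on $[1,\infty)$ with the strictly increasing function $g_{\l_*,\g}$ used inside the proof of Lemma~\ref{lem:Lambda-star-slab}, we get $\s_{\l_*,\g}(\ell)>0$ for all real $\ell>1$, in particular for every $|k|$ with $k\in\N^2$, $|k|>1$. Hence $P_\ell u=0$ for $\ell\neq1$, so $u\in V_1\cap C^{2,\a}_{p,\cR}=\R\ov v$ by \eqref{eq:V_1-cap-XcP}; conversely $L_{\l_*}\ov v=\s_{\l_*,\g}(1)\ov v=0$. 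This gives \eqref{eq:def-v-0}.

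For part (ii), the inclusion $R(L_{\l_*})\subset C^{0,\a}_{p,\cR}\cap V_1^\perp$ follows from the symmetry of $L_{\l_*}$ on $L^2_{p,\cR}$ (it is diagonal with real eigenvalues in the orthonormal system $\{e_k\}$): if $f=L_{\l_*}u$ with $u\in C^{2,\a}_{p,\cR}$ and $w\in V_1$, then $\langle f,w\rangle_{L^2}=\langle u,L_{\l_*}w\rangle_{L^2}=0$. For the reverse inclusion, given $f\in C^{0,\a}_{p,\cR}\cap V_1^\perp$ (so $P_1f=0$), I would set
$$
u:=\sum_{\ell\neq1}\frac{1}{\s_{\l_*,\g}(\ell)}\,P_\ell f .
$$
Since $\s_{\l_*,\g}(\ell)>0$ for $\ell\neq1$ and $\s_{\l_*,\g}(\ell)/\ell^2\to1$ by \eqref{eq:asymp-sigk-slab}, there is $c_0>0$ with $\s_{\l_*,\g}(|k|)\geq c_0(1+|k|^2)$ for all $k\in\N^2$ with $|k|\neq1$, so the series converges in $H^2_{p,\cR}$ and $L_{\l_*}u=f$ in $L^2_{p,\cR}$. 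To upgrade $u$ to $C^{2,\a}_{p,\cR}$ I would write $L_\l=-\D+\cK_\l$ as in \eqref{eq:DcF-lamb-slab}: the two integral terms forming $\cK_{\l_*}$ map $C^{0,\a}_{p,\cR}$ boundedly into itself (immediate from $\|G_\k\|_{L^1(\R^2)}=2\pi/\k$, see \eqref{eq:int-R2-G-k-sqrt}, plus elementary differencing in $t$), while $u\in H^2_{p,\cR}\embed C^{0,\a}_{p,\cR}$; hence $-\D u=f-\cK_{\l_*}u\in C^{0,\a}_{p,\cR}$, and periodic Schauder estimates yield $u\in C^{2,\a}_{p,\cR}$, the $2\pi\Z^2$-periodicity and the $\cR(2)$-symmetry being inherited from $f$. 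An equivalent route: $\cK_{\l_*}$ differs from multiplication by a constant by a smoothing, hence compact, operator, so $L_{\l_*}$ is a compact perturbation of $-\D$ minus a constant and thus Fredholm of index zero; then (ii) follows from (i) by a codimension count, since $C^{0,\a}_{p,\cR}\cap V_1^\perp$ has codimension one in $C^{0,\a}_{p,\cR}$.

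For part (iii), since $\ov v\in V_1$ for every $\l>0$, \eqref{eq:Ll-eigen-slab} gives the identity $L_\l\ov v=\s_{\l,\g}(1)\,\ov v$ in $C^{0,\a}_{p,\cR}$, valid for all $\l>0$; as $\cF$, hence $\l\mapsto D\cF(\l)[\ov v]$, is smooth by Lemma~\ref{lem:Diff-of-cF-slab}, differentiating this identity in $\l$ at $\l_*$ yields
$$
\partial_\l\big|_{\l=\l_*}L_\l\ov v=\Big(\partial_\l\big|_{\l=\l_*}\s_{\l,\g}(1)\Big)\,\ov v .
$$
By \eqref{eq:monoton-lambda-slab} the scalar on the right is strictly negative, hence nonzero, and $\ov v\in V_1$ forces $P_1\ov v=\ov v\neq0$, i.e. $\ov v\notin V_1^\perp$. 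Therefore $\partial_\l|_{\l=\l_*}L_\l\ov v\notin C^{0,\a}_{p,\cR}\cap V_1^\perp=R(L_{\l_*})$, which is exactly \eqref{eq:transversality-cond}.

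The only genuinely non-formal point is the regularity upgrade in part (ii): the Fourier series produces merely an $H^2$ solution, and because $L_{\l_*}$ carries a nonlocal component one must justify that $-\D u\in C^{0,\a}_{p,\cR}$, which is where the mapping bounds of Lemma~\ref{lem:Diff-of-cF-slab} (or the explicit kernel estimates) combined with periodic Schauder theory enter. Everything else — the sign bookkeeping in (i) and (iii) and the identification $V_1\cap C^{2,\a}_{p,\cR}=\R\ov v$ via \eqref{eq:V_1-cap-XcP} — is routine once Lemmas~\ref{lem:def-linearized-eigen-slab} and \ref{lem:Lambda-star-slab} are in hand.
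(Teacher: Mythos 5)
Your proof is correct and follows essentially the same route as the paper's: diagonalize $L_{\l_*}$ via Lemma~\ref{lem:def-linearized-eigen-slab}, use the sign information from Lemma~\ref{lem:Lambda-star-slab} for parts (i) and (iii), and for part (ii) solve in $H^2_{p,\cR}$ via Fourier series and then upgrade to $C^{2,\a}$ by rewriting $L_{\l_*}w=f$ as a Poisson equation with H\"older right-hand side and invoking elliptic Schauder regularity. One small remark: you are slightly more careful than the paper in noting that $\s_{\l_*,\g}(\ell)>0$ is needed for the (generally non-integer) values $\ell=|k|$, $k\in\N^2$, $|k|\neq 1$, which indeed follows because $g_{\l_*,\g}$ is strictly increasing on $[1,\infty)$ by the proof of Lemma~\ref{lem:Lambda-star-slab}; the paper states the conclusion only for integer $\ell$, but the argument it gives actually covers the real interval as you observe.
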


\proof
For $\g$ satisfying  \eqref{eq:restric-gamma-slab}, by Lemma~\ref{lem:Lambda-star-slab}, there exists a unique $\lambda_*=\l_*(\g,\k)>0$ such that $\sigma_{\lambda_*,\g}(1) =0$ and $\sigma_{\lambda_*,\g}(\ell) >0 $ for all $\ell\in \N\setminus\{1\}$. Hence by    \eqref{eq:-Lv-Fpurier-slab}, we have that   $L_{\l_*} v=0$ if and only if $v\in V_1$. This with \eqref{eq:V_1-cap-XcP} prove (i).\\

We now prove   (ii). We  pick $f \in C^{0,\a}_{p,\cR}\cap V_1^\perp\subset L^2_{p,\cR}\cap V_1^\perp$. Then using Fourier decomposition,  \eqref{eq:-Lv-Fpurier-slab},    \eqref{eq:asymp-sigk-slab} and \eqref{eq:sk-neq-0-slab}, we have a unique $w\in H^2_{p,\cR}\cap V_1^\perp$,   such that 
$$
L_{\l_*} w=f \qquad \textrm{ in $\R^2$}.
$$ 
 The proof of (ii) finishes once we show that $w\in  C^{2,\a}_{loc}(\R^2)$.

 By Morrey's embedding, $w\in  H^2_{loc}(\R^2)\subset C^{0,\a}_{loc}(\R^2)$, for every $\a\in (0, 1)$.   We then have
\be \label{eq:L-star-w-eq-f-slab}
-\D w=\g  w(t)\int_{\R^2}   G_\k\left(|r| \right)dr- \g   w(t) \int_{\R^2}  G_\k\left((|r|^2+4 \l^2 )^{1/2} \right) dr- F(t),
\ee
where 
 %
  %
%
%
$$
F(t):=\int_{\R^2} w(t-r) G_\k\left(|r| \right)dr+\int_{\R^2} w(t-r) G_\k\left((|r|^2+4 \l^2_* )^{1/2} \right) dr  .
$$
Then  for every $t\in \R^2$
\begin{align*}
|F(t)|&\leq  \|w\|_{C^{0,\a}(\R^2)}\left( \int_{\R^2}  G_\k\left(|r| \right)dr+\int_{\R^2}   G_\k\left((|r|^2+4 \l^2_* )^{1/2} \right) dr  \right)  \\
&\leq  C \|w\|_{C^{0,\a}(\R^2)}.
\end{align*}
It is also easy to see that for $t, \ov t\in \R^2$
\begin{align*}
|F(t)-F(\ov t)|& \leq C\|w\|_{C^{0,\a}(\R^2)}  |t-\ov t|^\a .
\end{align*}
We then obtain that $F\in C^{0,\a}(\R^2)$, for every $\a\in (0,1)$.  Now  by \eqref{eq:L-star-w-eq-f-slab} and  standard elliptic regularity theory, $w\in C^{2,\a}_{loc}(\R^2)$ for every  $\a\in (0,1)$.   Hence (ii) follows.\\
It remains to prove~\eqref{eq:transversality-cond}, which follows
from \eqref{eq:monoton-lambda-slab}  and the identity
\begin{equation}\label{Dlambda}
\partial_\lambda \Bigl|_{\lambda= \lambda_*} L_\lambda \ov{v} =  \partial_\lambda \Bigl|_{\lambda= \lambda_*}\sigma_{\l,\g}(1)\ov{v} .
\end{equation}
\QED
\subsection{Proof of Theorem \ref{th:Slab}}
Thanks to \eqref{propPhi}, we can apply 
  the Crandall-Rabinowitz Theorem (see \cite[Theorem 1.7]{M.CR}), to have
\begin{Proposition}\label{prop:exist-slab-CR} 
There exists  ${s_0}>0$, only depending on $ \k$ and $\g$, and a smooth curve
\be \label{eq:exist-branch-slab}
(-{s_0},{s_0}) \to (0,\infty)\times C^{2,\a}_{p,\cR}, \qquad s \mapsto (\lambda_s,u_s)
\ee
such that
\begin{enumerate}
\item[(i)] For every  $s \in (-{s_0},{s_0})$ 
 \be\label{eq:solved-slabl-l-s--u-s} \cG(\lambda_s, u_s)= \cF(\l_s+ u_s)-\cF(\l_s)=0   . \ee
\item[(ii)] $\lambda_0= \lambda_*$.
\item[(iii)]  $u_s = s \bigl( \ov{v}  + v_s\bigr)$ for all $s \in (-s_0,s_0)$,  with a smooth curve
$$
(-{s_0},{s_0}) \to C^{2,\a}_{p,\cR}, \qquad s \mapsto v_s
$$
satisfying $v_0=0$  and $\int_{[-\pi, \pi]^2}v_s(t)\ov{v}(t)dt=0$.
\end{enumerate}
\end{Proposition}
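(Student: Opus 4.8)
The plan is to check that $(\cG,\l_*)$ fulfils the hypotheses of the Crandall--Rabinowitz bifurcation theorem \cite[Theorem 1.7]{M.CR} and then simply transcribe its conclusions. First I would record three structural facts about $\cG$. By Lemma~\ref{lem:Diff-of-cF-slab} the map $\cF$ is of class $C^\infty$ on $\cO$ with derivatives bounded on bounded sets, and since $\cF$ manifestly commutes with the reflections generating $\cR(2)$ and with the $2\pi\Z^2$--translations, it carries $\cO\cap C^{2,\a}_{p,\cR}$ into $C^{0,\a}_{p,\cR}$; hence $\cG:\cU\to C^{0,\a}_{p,\cR}$, $\cG(\l,u)=\cF(\l+u)-\cF(\l)$, is a well-defined smooth map on the open set $\cU$. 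Second, $\cG(\l,0)=\cF(\l)-\cF(\l)=0$ for all $\l>0$, so $\{(\l,0):\l>0\}$ is a branch of trivial solutions. Third, $D_u\cG(\l,0)=D\cF(\l)=L_\l$, the operator analysed in Lemma~\ref{lem:def-linearized-eigen-slab} and Proposition~\ref{propPhi}.

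Next I would invoke Proposition~\ref{propPhi} at the unique point $\l=\l_*=\l_*(\g,\k)$ guaranteed by \eqref{eq:restric-gamma-slab}: the kernel $N(L_{\l_*})=V_1\cap C^{2,\a}_{p,\cR}=\R\ov v$ is one-dimensional with $\ov v(t_1,t_2)=\cos t_1+\cos t_2$ (here \eqref{eq:V_1-cap-XcP} is used; it is precisely the $\cR(2)$--symmetry that cuts the two-dimensional space $V_1$ down to a line, making the crossing eigenvalue simple); the range $R(L_{\l_*})=C^{0,\a}_{p,\cR}\cap V_1^\perp$ is closed of codimension one; and the transversality condition \eqref{eq:transversality-cond} holds, i.e.\ $\partial_\l\Bigl|_{\l=\l_*}L_\l\ov v\notin R(L_{\l_*})$. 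These are exactly the Fredholm and transversality assumptions of Crandall--Rabinowitz. To obtain the normalization stated in (iii), in the Lyapunov--Schmidt splitting $C^{2,\a}_{p,\cR}=N(L_{\l_*})\oplus Z$ I would take the complement $Z:=\{v\in C^{2,\a}_{p,\cR}:\int_{[-\pi,\pi]^2}v(t)\ov v(t)\,dt=0\}=C^{2,\a}_{p,\cR}\cap V_1^\perp$; with this choice the nontrivial solutions furnished by the theorem are automatically of the form $u_s=s\ov v+s v_s$ with $v_s\in Z$, that is $\int_{[-\pi,\pi]^2}v_s(t)\ov v(t)\,dt=0$.

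Applying \cite[Theorem 1.7]{M.CR} then produces $s_0>0$ and a curve $s\mapsto(\l_s,u_s)$ through $(\l_*,0)$ with $\l_0=\l_*$, $u_0=0$ and $u_s=s(\ov v+v_s)$, $v_0=0$, solving $\cG(\l_s,u_s)=0$ for $|s|<s_0$; since $\cG$ is $C^\infty$, the reduced scalar equation of the Lyapunov--Schmidt procedure is $C^\infty$ and the implicit function theorem gives that $s\mapsto(\l_s,v_s)$ is smooth. Finally all data entering the construction --- $\l_*$, the spaces $C^{k,\a}_{p,\cR}$, the operator $L_{\l_*}$ and its kernel, range and transversality --- depend only on $\k$ and $\g$, so $s_0$ can be chosen depending only on $\k,\g$. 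I do not expect a genuine obstacle at this stage: the single analytically substantive point is the identification of $R(L_{\l_*})$ with the codimension-one H\"older subspace, and that has already been settled in Proposition~\ref{propPhi}(ii) by solving $L_{\l_*}w=f$ in $H^2_{p,\cR}\cap V_1^\perp$ through the Fourier series (using \eqref{eq:asymp-sigk-slab} and \eqref{eq:sk-neq-0-slab}) and bootstrapping to $C^{2,\a}_{loc}(\R^2)$ via standard elliptic regularity; everything else is bookkeeping with the statement of the theorem.
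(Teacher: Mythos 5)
Your proof is correct and follows exactly the same route as the paper: verify the Crandall--Rabinowitz hypotheses via Proposition~\ref{propPhi} (one-dimensional kernel $\R\ov v$, closed codimension-one range $C^{0,\a}_{p,\cR}\cap V_1^\perp$, transversality) and read off the conclusions of \cite[Theorem 1.7]{M.CR}. The paper simply states this in one line, while you spell out the bookkeeping (well-definedness of $\cG$, the trivial branch, the choice of complement $Z$, and the dependence of $s_0$ on $\k,\g$), all of which is accurate.
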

%

%
%
\section{Periodic lamellae and proof of Theorem \ref{th:Lamellae}}
For $\e\in \R\setminus\{0\}$, with $|\e|$ small, we define 
$$
\cL_{\psi}^\e=\O_{\psi}+\frac{1}{|\e|} \Z e_3=\bigcup_{p\in \Z}\left( \O_{\psi}+\left(0,0,\frac{p}{|\e|}\right)\right),
$$
where as above $\psi\in C^{2,\a}_{p,\cR}\cap \cO$.
 For $x\in \{-1,1\} $, the mean curvature takes the form
\be
H_{\de \cL_{\psi}^\e}((t, \psi(t)x) )= -\div  \frac{\n \psi(t)}{\sqrt{1+|\n \psi(t)|^2}} ,
\ee
and by a change of variable, the nonlocal part takes the form
\begin{align*}
\int_{\cL_{\psi}^\e }&G_\k(|(t, \psi(t)x)-\xi|) d\xi
=  \int_{\R^2} \int_{-1}^1\psi(s) G_\k(|(t, \psi(t)x)-(s,\psi(s)y) |) dy ds \\
&\quad + \sum_{p\in \Z _*} \int_{\R^2} \int_{-1}^1\psi(s) G_\k(|(t, \psi(t)x)-(s,\psi(s)y)-(0,0,{|\e|}^{-1}
{p})|) dy ds.
\end{align*}
Now using the fact that $ \Z _*=-\Z _*$ and the change of variable $y$ to $-y$, we see that 
$$
\int_{\cL_{\psi}^\e }G_\k(|(t,- \psi(t))-\xi|) d\xi=\int_{\cL_{\psi}^\e }G_\k(|(t, \psi(t))-\xi|) d\xi.
$$
This implies, in particular,  that for every $x\in \{-1,1\}$,  $t\in \R^2$ and $|\e|<\frac{1}{2\l}$,
$$
H_{\de \cL_{\psi}^\e}((t, \l x) )= \g   \l \sum_{p\in \Z } \int_{\R^2} \int_{-1}^1  G_\k\left(( |r|^2+ (\l(1-y)-{|\e|}^{-1}
{p} )^2  )^{1/2} \right) dy dr.
$$
Namely $\cL_{\l}^\e$ is an equilibrium pattern  for every $\l>0$,  with $|\e|<\frac{1}{2\l}$.

Recalling  \eqref{eq:dexp-cF-slab}, we get 
\begin{align}\label{eq:def-cH-lamellae}
& \cH_{ \cL_{\psi}^\e}((t, \psi(t)x) )=H_{\de \cL_{\psi}^\e}((t, \psi(t)) )+ \int_{\cL_{\psi}^\e }G_\k(|(t, \psi(t))-\xi |) d\xi \nonumber  \\
&      =\cF(\psi)(t)  + \sum_{p\in \Z _*} \int_{\R^2} \int_{-1}^1\psi(s) G_\k\left(\left|(t, \psi(t))-(s,\psi(s)y)-(0,0,{|\e|}^{-1}
{p})\right|\right) dy ds,
\end{align}
where  $\cF: \cO\to C^{0,\a}(\R^2)$ is as in the previous section, is given by
\begin{align}\label{eq:def-cF-lamellae}
&\cF(\psi)(t):= -\div \frac{\n \psi(t)}{\sqrt{1+|\n \psi(t)|^2}}   +\g  \int_{\R^2}\int_{-1 }^1\psi(s)G_\k(|(t, \psi(t) )-(s,\psi(s)y)|) dyds .
\end{align}
We now 
define   $\widehat \cF:\R\setminus\{0\}\times \cO\to C^{0,\a}(\R^2)$   by 
\begin{align}\label{eq:def-hat-F-e-psi}
\widehat \cF(\e,\psi )(t):=\sum_{p\in \Z _*} \int_{\R^2} \int_{-1}^1\psi(s) G_\k\left(\left|(t, \psi(t))-(s,\psi(s)y)-(0,0,{|\e|}^{-1}
 {p} )\right|\right) dy ds .
\end{align}
By similar arguments as in the proof of Lemma \ref{lem:Diff-of-cF-slab}, we can deduce    that 
\begin{align*}
\label{eq:DefcF1-slab-General}
&D \widehat \cF(\e,\psi ) [w](t) \\
&= \sum_{p\in \Z_*} \int_{\R^2}  (w(t-r)-w(t))   G_\k((|r|^2+ ( \psi(t)  - \psi(t-r)+|\e|^{-1}p)^2)^{1/2})  dr\nonumber\\
& + \sum_{p\in \Z_*} \int_{\R^2}  (w(t-r)  +w(t))G_\k((|r|^2+ ( \psi(t)  + \psi(t-r)+|\e|^{-1}p)^2)^{1/2}) dr .
\end{align*}
Let $\cU_\l$ be a neighborhood  of $\l>0$ in $\cO$. Then there exists $\ov{\e},c>0$, only depending on $\cU_\l$ and $\k$, such that for 
  $|\e|\in (0, \ov{\e})$   and $\psi\in \cU_\l$, we have  
\begin{align*}
G_\k((|r|^2+ ( \vp(t)  - \vp(t-r)+|\e|^{-1}p)^2)^{1/2}) &\leq   G_\k\left( \left( |r|^2+c^2 {|\e|}^{-2}
 {p^2} \right)^{1/2} \right).
 %
\end{align*}
Therefore by a change of variable and \eqref{eq:int-G-k-R2-sqrt}, if  $|\e|\in (0, \ov{\e})$   and $\psi\in \cU_\l$, then 
\begin{align*}
\| D \widehat\cF(\e, \vp)[w]\|_{C^{0,\a}(\R^2)} &\leq 4\|w\|_{C^{0,\a}(\R^2) }\sum_{p\in \Z_*}    \int_{\R^2}G_\k\left( \left( |r|^2+c^2 {|\e|}^{-2}
 {p^2} \right)^{1/2} \right) dr\\
&\leq  4 c\|w\|_{C^{0,\a}(\R^2) } \sum_{p\in \Z_*}  |\e|^{-1}|p|    \int_{\R^2}G_{\k c\frac{|p|}{ |\e|}}\left( \left( |t|^2+1 \right)^{1/2} \right) dt \\
&\leq  4 c\|w\|_{C^{0,\a}(\R^2) }  |\e| \sum_{p\in \Z_*}  |p|^{-1}    \exp\left(-\k c {|p|}{ |\e|^{-1}} \right) \\
&\leq  4 c\|w\|_{C^{0,\a}(\R^2) }|\e|  \exp\left(-\frac{\k}{2} c { |\e|^{-1}} \right) .
\end{align*} 
Up to decreasing $\ov{\e}$ and choosing a smaller neighborhood of $\l$, we can easily see that 
  $\widehat \cF$ extends to a  smooth function $(-\ov{\e},\ov{\e})\times \cU_\l \to C^{0,\a}(\R^2)$ with $D^k\widehat \cF(0,\psi )=0$ for all $\psi\in \cU_\l$ and $k\in \N$.
In view of \eqref{eq:def-cF-lamellae},\eqref{eq:def-hat-F-e-psi} and \eqref{eq:def-cH-lamellae}, we define  $$\cN_\l: (-\ov{\e},\ov{\e})  \times  \cU_\l\cap C^{2,\a}_{p,\cR}\to  C^{0,\a}_{p,\cR}$$  by
\be \label{eq:def-cN-lamel}
   (\e,\psi)\mapsto \cN_\l(\e,\psi)(t):= \cF(\psi)(t)+\g\widehat\cF(\e,\psi)(t)=\cH_{ \cL_{\l }^\e}((t, \psi (t)) ),
\ee
In the following, we let   $\g$ satisfy  \eqref{eq:restric-gamma-slab},

Instead of applying the Crandall-Rabinowitz Theorem  \cite[Theorem 1.7]{M.CR}, we will use the same argument  of its proof, mainly based on the Implicit Function Theorem. More precisely, 
we  apply the latter theorem to solve the equation $\cM=0$, with  the function 
$$
\cM: (-\ov{\e},\ov{\e})\times (- \l_* /8, \l_* /8) \times (1/2,3/2)  \times    C^{2,\a}_{p,\cR}\to  C^{0,\a}_{p,\cR}, 
$$
  given by
\begin{align}\label{eq:def-cM-lamell}
 \cM(\e,s,\d, v)(t)&:= \frac{\cN_{\l_*}(\e, \d \l_*+ s (\ov{v}+v))(t)-\cN_{\l_*}(\e, \d \l_* ) }{s}\\
 &=\int_0^1D_\psi\cN_{  \l_*}(\e, \d\l_*+\varrho s(\ov{v}+v)) )[\ov{v}+v ](t)d\varrho. \nonumber
\end{align}
It is clear that $\cM$ is smooth in $(-\ov{\e},\ov{\e})\times    (- \l_* /8, \l_* /8) \times  (1/2,3/2) \times B_{C^{2,\a}_{p,\cR}}(0, 1 /8)$. Now 
from Proposition \ref{propPhi}(i), we have that $\cM(0,0,1,0)= L_{\l_*} (\ov{v})=0$.  
We also have that 
 $$ D_v\cM(0,0,1,0) =  L_{\l_*} : C^{2,\a}_{p,\cR}\cap V_1^\perp\to C^{0,\a}_{p,\cR}\cap V_1^\perp$$  is an isomorphism  by Proposition \ref{propPhi}(ii). Moreover,  by  Proposition \ref{propPhi}(iii),  $$   D_\d  \cM(0,0, 1,0): \R  \to   V_1$$ is an isomorphism. From these, we can now apply the implicit function theorem to get $s_0,\e_0 >0$ and  a smooth curve
 $$
 (-\e_0,\e_0)\times   (-s_0,s_0)\to   (1/2,3/2)  \times   C^{2,\a}_{p,\cR}\cap V_1^\perp,\qquad ( \e,s)\mapsto (\d_{\e,s}, v_{\e,s})
 $$
with $ \d_{0,0}=1$ and $v_{0,0}=0$ such that 
 $$
  \cM(\e, s, \d_{\e,s},v_{\e,s})=0 \qquad \textrm{ on $C^{0,\a}_{p,\cR}$}.
  $$
From this together with \eqref{eq:def-cM-lamell} and \eqref{eq:def-cN-lamel}, we conclude that, for every $t\in \R^2$, 
\begin{align} \label{eq:solved-eq-Lamellae}
 \cF(\l_{\e,s}+ s (\ov{v}+v_{\e,s}))(t)+\g\widehat\cF(\e,\l_{\e,s}+ s (\ov{v}+v_{\e,s}))(t)&=  \cF(\l_{\e,s})+\g\widehat\cF(\e, \l_{\e,s})
\end{align}
where we have set $\l_{\e,s}:= \d_{\e,s}\l_*$. This ends the first part of the proof of  Theorem \ref{th:Lamellae}.  It follows from the uniqueness, inherent to the construction, that $\l_{0,s}=\l_s $ and $v_{0,s}= v_s $, for all $s\in (-s_0,s_0)$, where $\l_s$ and $v_s$ are given by Theorem \ref{th:Slab}.

 \QED

 \subsubsection{Proof of Theorem \ref{th:Lamellae} (completed)}
 The existence of the smooth curve of equilibrium patterns   $\cL_{\psi_{\e,s}}^\e$  follows from  the identity \eqref{eq:solved-eq-Lamellae}. Obviously $\psi_{0,s}=\vp_s$ as a consequence of the uniqueness form the Crandall-Rabinowitz theorem. 
 \QED

\section{Periodic lattice of near-cylinders  and near-spheres  }\label{s:latt-Cyl-Sph}
\subsection{Some preliminary and notations}\label{ss:prel-not}
Let $M\in\N$ with $1\leq M\leq N$.  Let  $\left\{ \textbf{a}_1;\dots;\textbf{a}_M\right\}$  be a basis of $\R^M$.  
 We recall   from the first section the    $M$-dimensional  {Bravais lattice} in $\R^N$:
\be 
\scrL^M=\left\{\sum_{i=1}^N k_i \textbf{a}_i\,:\,  k=(k_1,\dots,k_M)\in \Z^N\right\}\subset \R^N.
\ee 
%



%
%
Throughout this paper, we will put  $\scrL_*^M:= \scrL^M \setminus \{0\}$ and   $\Z^M_*:= \Z^M \setminus \{0\}$.

We let  $S^{N-1}$ be the unit sphere on $\R^N$.  We
denote by $\cS_k$  the finite dimensional subspace of spherical harmonics of degree $k$ on $S^{N-1}$, and by $(Y^i_{k})_{k\in\N,\, i=1,\dots, n_k}$ an orthonormal basis of $\cS_k$.  It is well known that  $n_0=1$ with $Y^1_0(\th)=\frac{1}{\sqrt{|S^{N-1}|}}$ and $n_1=N$ with $Y^i_1(\th)=\frac{\th_i}{\sqrt{|B^N|}}$, for each $i=1,\dots,N$.  We recall that $-\D_{S^{N-1}} Y = k(k+N-2)  Y $, for every $Y\in \cS_k$.  Moreover for a  function $f: (0,2)\to \R_+$, with $\int_{0}^2f(t)(t(2-t))^{\frac{N-3}{2}}dt<\infty$,  by the Funck-Hecke formula, see\cite[page 247]{El},  for $k\geq1$ and $\th\in S^{N-1}$,  we have 
$$
\int_{S^{N-1}}(Y(\th)-Y(\s))f(|\th-\s|)\, d\s=\mu_k Y(\th), \qquad\textrm{ for every $Y\in \cS_k$},
$$
for some real number $\mu_k\geq0$.
From this, we deduce that 
$$
\mu_k=\frac{1}{2}\int_{S^{N-1}}\int_{S^{N-1}}(Y^i_{k}(\th)-Y^i_{k}(\s))^2f(|\th-\s|)\, d\s d\th  \qquad \textrm{ for every $i=1,\dots, n_k$}. 
$$
For a function $w\in L^2(S^{N-1} )$, we can decompose it as
$$
w=\sum_{k\in \N}\sum_{i=1}^{n_k} w_k^i  Y_k^i,\qquad \textrm{ with } \qquad   w_k^i=\int_{S^{N-1}}w(\s) Y_k^i(\s)\, d\s.
$$
For $j\in \N$,  we define the Sobolev  spaces 
\be
H^j(S^{N-1} )=\left\{w\in L^2(S^{N-1})\,:\,  \sum_{k\in \N}\sum_{i=1}^{n_k}(1+|k|^2)^{{j}} (w_k^i )^2 <\infty  \right\}
\ee
and
\be
H^j_{\textbf{e}}(S^{N-1} )=\{w\in  H^j(S^{N-1} )\,:\, \textrm{ $w$ is even}    \}. 
\ee
  We define also    H\"older spaces
$$
C^{j,\alpha}_{\textbf{e}}(  S^{N-1}):= \biggl\{  \phi \in C^{j,\alpha}(  S^{N-1}), \quad \textrm{$\phi(-\th)=\phi(\th)$      }\biggl\}
$$
 endowed with  usual H\"older norm of $C^{j,\alpha}(  S^{N-1})$.

Following \cite{Cabre2015B},  for a fixed  $e \in S^{N-1} $ and a Lipschitz continuous map of rotations $S^{N-1}  \mapsto SO(N)$, $\th \mapsto R_\theta$ with the property that 
\begin{equation}
  \label{eq:def-Se}
S_e:=  \{\th \in S^{N-1}\::\: \th \cdot e \ge 0\}  \subset  \{\th \in S^{N-1} \::\: R_{\th}e = \th \}.
\end{equation}

\begin{Example}[\cite{Cabre2015B}]
\label{example-rotations}  
Consider the map $\th \mapsto R_\theta$ defined as follows. For $\theta \in S^{N-1} $ with $\th \cdot  e \ge 0$, we let $R_\theta$ be the rotation of the angle $\arccos \th \cdot e$ which maps $e$ to $\th$ and keeps all vectors perpendicular to $\th$ and $e$ fixed. We then extend the map $\th \mapsto R_\theta$ to all of $S^{N-1} $ as an even map with respect to reflection at the hyperplane $\{\th \in \R^N \::\: \th \cdot e = 0\}$.
\end{Example}
It is easy to see that 
\be\label{eq:Rota-isom}
|R_{\th}\s-\th |=|e-\s| \qquad \textrm{ for all $\th\in S_e$ and $\s\in S^{N-1} $}. 
\ee
Moreover, the Lipschitz property of the map $\th \mapsto R_\th$ implies that there is a constant $C>0$ with 
\be \label{eq:R-Lipschitz}
\|R_{\th_1} - R_{\th_2}\| \le C |\th_1-\th_2| \qquad \text{for all $\th_1,\th_2 \in S^{N-1} $,} 
\ee 
where, here and in the following, $\|\cdot\|$ denotes the usual operator norm.
We put
$$\cD:= \{\phi \in C^{2,\a}(S^{N-1}):\: \phi>0\}  .$$
For every $\vp\in \cD$,  we let  
\be\label{eq:def-F_vp-paramet} 
F_\vp : \R_+ \times S^{N-1} \to \R^N,\qquad F_\vp(r ,\s):= r \s \vp (\s ).
\ee
The map $F_\vp$ defines  a parameterization of  the set 
\be 
B_\vp^N:= F_{\vp}([0,1) \times S^{N-1}).
\ee 
We will denote $S^{N-1}_\vp:= \de B_\vp^N= F(1,S^{N-1} )$.
See e.g. \cite{Cabre2015B}, the unit outer normal   of $B_\vp^N$ at a point $F_\vp(1,\sigma)$, $\sigma \in S^{N-1}$, is given by
\be \label{eq:normal-S-vp}
\nu_\vp(F_\vp(1,\s)) = \frac{\phi(\s)\s  -\n \vp(\s)}{\sqrt{\phi^2(\s) + |\n\vp(\s)|^2}}
\ee
and for every continuous function $f$ on $\R^N$, we have
\begin{equation}
  \label{eq:transformation-rule}
\int_{S^{N-1}_\vp} f(y) \,dV(y) =  \int_{S^{N-1}} [f\circ  F_\vp(1,\cdot )](\s)  \G_\vp(\s)  \, d\s \qquad \text{with}\quad  \G_\vp=    \vp^{N-2}\sqrt{ \vp^{2}+  |\n \vp|^2}.
\end{equation}
Moreover letting
$$
\cA(\vp)= \int_{S^{N-1}}   \G_\vp(\s)  \, d\s,
$$
then differentiating $\cA$, we get 
$$
D\cA(\vp)[v]= \int_{S^{N-1}} \cH(\vp)(\s)v(\s) \, d\s,
$$
where $ \cH(\vp)(\s)=H_{S^{N-1}_\vp }( \vp(\s)\s)$ is the mean curvature operator of $\de B^N_\vp=S^{N-1}_\phi$ and is given by
\be \label{eq:MC-operator-sph}
 \cH(\vp) =- \div\frac{ \vp^{N-2} \n \vp}{\sqrt{ \vp^{2}+  |\n \vp|^2}} +(N-2)\frac{\vp^{N-3} }{ \sqrt{ \vp^{2}+  |\n \vp |^2}}+\frac{\vp^{N-1}}{( \vp^{2} +  |\n \vp |^2)^{-1/2}} .
\ee
\subsubsection{From 2D near-balls  to 3D near-cylinders}\label{sp:from-balls-to-cyl}
We define the perturbed cylinder
\be
C_\vp:=  \left\{( r,\th,t)\in
 \R_+\times S^{1}\times \mathbb{R} \,:\, r<\vp(\th)  \right\}= B_\vp^2\times \R.
\ee
We then consider the lattice of cylinders
$$
  \cC_{\vp}^\e:=\bigcup_{p \in \scrL^M} \left(C_\vp+\frac{p}{|\e|}\right)= \bigcup_{p \in \scrL^M} \left(C_\vp+\left(\frac{p}{|\e|},0\right)\right) ,
$$
where  $\vp:   S \to (0,\infty)$ is an even and $\scrL^M$ is the $M$-dimensional Bravais lattice, with $M\leq 2$. \\
We note that   the integral  of the  Yukawa  interaction $G_\k$ in the $t$ direction gives $2K_0$ which is   the $2$-dimensional screened Coulomb potential.  Our problem is therefore reduced to find  spherical lattices of  equilibrium pattern   in   $\R^2$. Indeed, for a point   $Z_0=(\vp(\th)\th,t)\in \de C_\vp$, the Yukawa interaction takes the form
\begin{align*}
\int_{ \cC_{\vp}^\e}G_\k(|Z_0-Z|) dZ&= \sum_{p\in \scrL^M }\int_{ C_\vp}   G_\k(|Z_0-Z-\frac{p}{|\e|} |)dZ\\
&=\int_{ \cC_{\vp}^\e}   G_\k(|Z_0-Z|)dZ+ \sum_{p\in \scrL ^M} \int_{ C_{\vp}}   G_\k(|Z_0-Z-\frac{p}{|\e|} |)dZ\\
&= \int_{B_\vp^2}   \int_{ \R} G_\k(|(\vp(\th)\th,t) -(y, s) -\frac{p}{|\e|}|) ds   dy  \\
&\quad+ \sum_{p\in \scrL^M }  \int_{B_\vp^2}   \int_{ \R} G_\k(|(\vp(\th)\th,t) -(y, s) |) ds   dy     .
\end{align*} 
By \eqref{eq:int-cos-G-k-sqrt},  
$$
 \int_{\R}   G_\k\left((r^2+\d^2)^{1/2} \right)dr= 2K_0( \k \d ),
$$
where as before $K_0$ is the modified Bessel function of the second kind of order $0$.
We then deduce that 
\begin{align*}
\int_{ \cC_{\vp}^\e}G_\k(|Z_0-Z|) dZ 
&=2  \int_{B_\vp^2}    K_0(\k |(\vp(\th) \th)- y|)  dy  \\
&+2\sum_{p\in \scrL_* ^M}    \int_{B_\vp^2}   K_0\left(\k  |(\vp(\th) \th)-y -\frac{p}{|\e|}  | \right)  dy.
\end{align*} 

Since the mean curvature $H_{C_\vp}$ does not depend on $t$, from the computations above, $\cH_{\cC^\e_\vp}$ does not depend on $t$.  We are therefore reduce to find near-ball lattices $B_\vp^2+\frac{1}{|\e|}\scrL^M$ in $\R^2$ with Yukawa potential  (or screened Coulomb potential) $2K_0$. We will therefore treat simultaneously the 2D and 3D cases. For that, we 
 define 
\be \label{eq:def-GkN}
G_{\k,N}(r):=\left\{\begin{array}{cc}
2K_0(\k r)&\qquad\textrm{ for $N=2$}\vspace{3mm}\\
G_\k(r)&\qquad\textrm{ for $N=3$}. 
\end{array}
\right.
\ee

\subsection{  Lattices of perturbed balls   in 2D and 3D }\label{ss:cyl-lattice}

For $|\e|$ small and $\phi\in \cD $, we then define 
$$
\cB_\phi^{\e} := \bigcup_{p \in \scrL^M} \Bigl( B_\phi^N  + \frac{p}{|\e|}\Bigr)=B_{\phi}^N+\frac{1}{|\e|} \scrL^M.
$$
We will consider in this section the Yukawa potential $G_{\k,N}$  defined in \eqref{eq:def-GkN}. 
For $Z_0=\th \phi(\th) \in \de  B_\phi^N$, we have 
\be\label{eq:cH-eq-latt-cyl-sph}
\cH_{\cB_\phi^{\e} } (Z_0)= H_{\de B^N_\phi}(Z_0)+ \g \int_{\cB_\phi^{\e}}G_{\k,N}(|Z_0-Z|) dZ.
\ee

The Yukawa interaction takes the form
\begin{align*}
\int_{\cB_\phi^{\e}}G_{\k,N}(|Z_0-Z|) dZ&= \sum_{p\in \scrL ^M}\int_{ B_\phi^N}   G_{\k,N}(|Z_0-Z-\frac{p}{|\e|} |)dZ\\
&=\int_{ B_\phi^N}   G_{\k,N}(|Z_0-Z|)dZ+ \sum_{p\in \scrL_* ^M} \int_{ B_\phi^N}   G_{\k,N}(|Z_0-Z-\frac{p}{|\e|} |)dZ.
\end{align*} 
We define the following maps   
\be
\cD\to  C^{0,\alpha}(  S^{N-1}),\qquad \cF_{0}(\phi)(\th):=  \int_{ B_\phi^N}   G_{\k,N}(|Z_0-Z|)dZ,
\ee
\be
\R\setminus\{0\}\times \cD\to  C^{0,\alpha}( S^{N-1}), \qquad \cF_{p}(\e, \phi)(\th):=  \int_{ B_\phi^N}   G_{\k,N}(|Z_0-Z-\frac{p}{|\e|} |)dZ
\ee
and    
\be
\cF: \R\setminus\{0\}\times \cD\to  C^{0,\alpha}(  S), \qquad \cF (\e, \phi)(\th):=\sum_{p\in \scrL_* ^M} \int_{ B_\phi^N}   G_{\k,N}(|Z_0-Z-\frac{p}{|\e|} |)dZ.  
\ee
We observe  that for any neighborhood $\cU$ of $1$ in  $\cD$, there exist  constants $C, \e_0>0$ (depending only on $\k$ and $\scrL^M$)  such that for every $|\e|\in (0,\e_0)$ and any $ p\in \scrL_*^M$
$$
|\cF_{p}(\e, \phi)(\th)  |  \leq C |p|^ {-1/2}  |\e|^ {1/2} \exp\left(\frac{- \k |p|}{2|\e|} \right)  ,\qquad\textrm{ for every $\vp\in \cU$ and $\th\in  S^{N-1}$}
$$
and 
\be\label{es:-cFp-eps}
\|\cF(\e, \phi)  \|_{C^{0,\a}(  S^{N-1})}  \leq  \exp(-c |\e|^{-1})  ,\qquad\textrm{ for every $\vp\in \cU$ and $\th\in  S^{N-1}$,}
\ee
for some $c>0$ depending only on $\a,N,\k$ and $\scrL^M$.\\
It follows that   $\cF$ extends smoothly in a neighborhood of $(0,1)$ in $\R\times \cD$.  Now we define the mean curvature operator   $\cH: \cD\to  C^{0,\alpha}(  S^{N-1})$ (computed in \eqref{eq:MC-operator-sph}) by 
\be
\cH(\phi)(\th)=H_{\de B_\phi^N}(\phi(\th)).
\ee
We define $\cQ: \R\times   \cD\to C^{0,\a}(  S^{N-1})$ by 
\be \label{eq:def-cG-Operator-sph}
\cQ(\e,\phi):=\cH(\phi)+\g \cF_{0}(\phi)+\g  \cF(\e, \phi)
\ee
and we note that from \eqref{eq:cH-eq-latt-cyl-sph},
$$
\cH_{\cB_\phi^{\e} }(\th\phi(\th))=\cQ(\e,\phi)(\th).
$$
Our aim is to apply the implicit function theorem   to solve the equation
\be \label{eq:equation-to solve-sphere-lattice}
\cQ(\e,\phi)  -\cQ(0,1)=0.
\ee
Let us then study the linearization of $D_\phi\cQ(0,1)$. 
\begin{Lemma}\label{Regularity -cF-0-lattice-sph-cyl}
The map $\cQ: \cD\to C^{0,\a}(  S^{N-1})$ is smooth. Moreover for every $\l>0$  and $w\in C^{2,\a}(  S^{N-1})$, we have 
\begin{align}
D_\phi \cQ(0,1)[w](\th )& =-\D_{S^{N-1}} w(\th)-(N-1)w(\th) - \g   \int_{S^{N-1}}(w(\th)-w( \s))  G_{\k,N}\left( |\th -\s| \right)  d\s \nonumber  \\
  &\quad + \g w(\th)  \int_{S^{N-1}}(1-\th\cdot \s )  G_{\k,N}\left( |\th -\s| \right)   d\s. \label{eq:express-lin-sph-latt}
\end{align}
\end{Lemma}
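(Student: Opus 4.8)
The plan is to use the decomposition $\cQ(\e,\phi)=\cH(\phi)+\g\cF_{0}(\phi)+\g\cF(\e,\phi)$ from \eqref{eq:def-cG-Operator-sph} and to treat the three summands in turn. Smoothness of $\cH:\cD\to C^{0,\a}(S^{N-1})$ follows at once from the explicit formula \eqref{eq:MC-operator-sph}: it is built from the smooth map $\phi\mapsto(\phi,\n\phi)$ and rational expressions whose only denominators are powers of $\sqrt{\phi^{2}+|\n\phi|^{2}}$, which stays bounded away from $0$ on $\cD$. For the far--interaction term, the bound \eqref{es:-cFp-eps} — together with the analogous estimates for its $\phi$--derivatives, obtained exactly as in the proof of Lemma~\ref{lem:Diff-of-cF-slab} — shows that $\cF$ extends to a smooth map on a neighbourhood of $(0,1)$ in $\R\times\cD$ with $D^{k}_{\phi}\cF(0,\cdot)\equiv 0$ for every $k$; in particular $D_{\phi}\cF(0,1)[w]=0$. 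Thus the only genuine work is the smoothness of $\cF_{0}$ and the identification of $D_{\phi}\cF_{0}(1)$.

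For $\cF_{0}$ I would pass to polar coordinates and write, for $\phi\in\cD$,
\[
\cF_{0}(\phi)(\th)=\int_{S^{N-1}}\int_{0}^{\phi(\s)}G_{\k,N}\bigl(|\phi(\th)\th-\rho\s|\bigr)\,\rho^{N-1}\,d\rho\,d\s ,
\]
so that the domain of integration becomes fixed and the only singularity of the integrand sits at $\s=\th$, $\rho=\phi(\th)$, where $G_{\k,N}$ has an integrable $|x|^{2-N}$ (for $N=3$) or logarithmic (for $N=2$) behaviour. After extracting this singular factor one differentiates under the integral sign and controls the resulting kernels by the Fa\`{a} di Bruno formula \eqref{eq:Faa-de-Bruno} together with the same integration-by-parts device used in Lemma~\ref{lem:Diff-of-cF-slab} (now in the radial variable), obtaining $\cF_{0}\in C^{\infty}(\cD,C^{0,\a}(S^{N-1}))$ and hence the smoothness of $\cQ$. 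I expect this singular-kernel bookkeeping to be the only real obstacle, and it is entirely parallel to the slab case already carried out.

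It then remains to compute $D_{\phi}\cF_{0}(1)[w]$ by differentiating $t\mapsto\cF_{0}(1+tw)(\th)$ at $t=0$ in the displayed formula. Only two contributions arise: the variation of the upper limit $\rho=1+tw(\s)$ gives $\int_{S^{N-1}}w(\s)\,G_{\k,N}(|\th-\s|)\,d\s$, while the variation of the base point $(1+tw(\th))\th$ gives
\[
w(\th)\int_{B^{N}}G_{\k,N}'(|\th-Z|)\,\frac{(\th-Z)\cdot\th}{|\th-Z|}\,dZ
=-w(\th)\int_{B^{N}}\div_{Z}\!\bigl[G_{\k,N}(|\th-Z|)\,\th\bigr]\,dZ
=-w(\th)\int_{S^{N-1}}(\th\cdot\s)\,G_{\k,N}(|\th-\s|)\,d\s ,
\]
the divergence theorem being applicable on $B^{N}$ because the flux of this singular field through a sphere of radius $\eps$ about $\th$ is $O\bigl(\eps^{N-1}G_{\k,N}(\eps)\bigr)\to 0$. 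Adding the classical Jacobi operator of the unit sphere, $D\cH(1)[w]=-\D_{S^{N-1}}w-(N-1)w$ (see \cite{Cabre2015B}, or linearize \eqref{eq:MC-operator-sph} directly, noting that the radial perturbation $\phi\mapsto 1+tw$ agrees to first order with the normal graph at $S^{N-1}$), together with $\g D_{\phi}\cF(0,1)[w]=0$, and rewriting
\[
\int_{S^{N-1}}\!w(\s)\,G_{\k,N}(|\th-\s|)\,d\s-w(\th)\!\int_{S^{N-1}}\!(\th\cdot\s)\,G_{\k,N}(|\th-\s|)\,d\s
=-\!\int_{S^{N-1}}\!(w(\th)-w(\s))\,G_{\k,N}(|\th-\s|)\,d\s+w(\th)\!\int_{S^{N-1}}\!(1-\th\cdot\s)\,G_{\k,N}(|\th-\s|)\,d\s ,
\]
yields exactly \eqref{eq:express-lin-sph-latt}.
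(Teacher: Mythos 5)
Your proposal is correct and follows essentially the same route as the paper: decompose $\cQ=\cH+\g\cF_0+\g\cF$, note that $\cF$ and all its $\phi$-derivatives vanish at $\e=0$ by the exponential estimate \eqref{es:-cFp-eps}, take $D\cH(1)[w]=-\D_{S^{N-1}}w-(N-1)w$ from the Jacobi operator of the unit sphere, and compute $D\cF_0(1)[w]$ by differentiating the polar-coordinate representation and converting the interior contribution to a boundary integral via the divergence theorem (this is precisely the derivation carried out in Section \ref{ss:reg-cF_0-latt}, Lemma \ref{lem:smooth-cF-0-N}). The one place you abbreviate is the smoothness of $\cF_0:\cD\to C^{0,\a}(S^{N-1})$: in the paper this occupies Proposition \ref{Propo-reg-h} and Lemmas \ref{lem:est-cK}--\ref{lem:est-cand-deriv} and, for $N=2$, requires writing $2K_0(\k r)=\int_\R G_\k(\sqrt{r^2+t^2})\,dt$ to introduce an auxiliary integration variable, which leads to a small loss of H\"older exponent ($C^{1,\b}\to C^{0,\b-\e}$) that must be absorbed; your claim that it is ``entirely parallel to the slab case'' is true in spirit but undersells the extra care the logarithmic singularity demands.
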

  The proof of this result is not immediate (at least for $N=2$). We therefore postpone its proof  to Section \ref{ss:reg-cF_0-latt} below. 
\begin{Lemma}\label{eq:def-linearized-eigen-latt-Del}
We define the linear operator $$L:=D_u\cQ(0,1):C^{2,\a}(  S^{N-1}) \to C^{0,\a}(  S^{N-1}). $$ Then the  spherical harmonics $Y_k^i$
 are  the   eigenfunctions of $L$ and moreover
$$ 
 L(Y_k^i(\th))=\s_{\g}(k) Y_k^i(\th),
$$
 with eigenvalues 
\begin{align}
\s_{\g}(k)
  &= \l_k- \l_1 -  \g ( \mu_k-  {\mu}_1)     ,
\end{align}
where $\l_k=k(k+N-2)$   and, for $i=1,\dots, n_k$,
 $$
 \mu_k =\frac{1}{2}\int_{S^{N-1}\times S^{N-1}}(Y_k^i(\th)-Y_k^i(\s))^2 G_{\k,N}( |\th-\s|) d\th d\s.
$$
%
\end{Lemma}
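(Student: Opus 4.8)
The plan is to diagonalize the linear operator $L = D_\phi\cQ(0,1)$ by testing it against the spherical harmonics $Y_k^i$ and using the Funck--Hecke formula already recorded in Section~\ref{ss:prel-not}. First I would start from the explicit formula \eqref{eq:express-lin-sph-latt} for $D_\phi\cQ(0,1)[w]$ obtained in Lemma~\ref{Regularity -cF-0-lattice-sph-cyl}, and apply it with $w = Y_k^i$. The term $-\D_{S^{N-1}}Y_k^i - (N-1)Y_k^i$ contributes $\big(k(k+N-2) - (N-1)\big)Y_k^i = (\l_k - \l_1)Y_k^i$, using $-\D_{S^{N-1}}Y = k(k+N-2)Y$ for $Y \in \cS_k$ and $\l_1 = 1\cdot(1+N-2) = N-1$. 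This handles the local (mean curvature) part.

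Next I would treat the two nonlocal integrals. For the first, $\int_{S^{N-1}}(Y_k^i(\th) - Y_k^i(\s))G_{\k,N}(|\th-\s|)\,d\s$, I would apply the Funck--Hecke formula directly with $f(r) = G_{\k,N}(r)$; this is legitimate because $G_{\k,N}$ has only a mild (logarithmic for $N=2$, or $r^{-1}$ for $N=3$) singularity at the origin, so the integrability condition $\int_0^2 f(t)(t(2-t))^{(N-3)/2}\,dt<\infty$ needed for Funck--Hecke holds. This yields $\mu_k Y_k^i(\th)$ with $\mu_k$ as in the statement, and the identity $\mu_k = \frac12\int_{S^{N-1}\times S^{N-1}}(Y_k^i(\th)-Y_k^i(\s))^2 G_{\k,N}(|\th-\s|)\,d\th\,d\s$ follows from the symmetrization argument recorded earlier in Section~\ref{ss:prel-not} (expanding the square and using $\int (Y_k^i)^2 = 1$ together with the fact that $\mu_k$ is independent of $i$). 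For the second integral, $w(\th)\int_{S^{N-1}}(1-\th\cdot\s)G_{\k,N}(|\th-\s|)\,d\s$, I would observe that $1 - \th\cdot\s = \tfrac12|\th-\s|^2$ and, more to the point, that $\int_{S^{N-1}}(1-\th\cdot\s)G_{\k,N}(|\th-\s|)\,d\s$ is a constant independent of $\th$ by rotational invariance; evaluating this constant via Funck--Hecke applied to the degree-$1$ harmonic $Y_1^i(\s) = \s_i/\sqrt{|B^N|}$ (since $\th\cdot\s = \sum_i \th_i\s_i$ and $1 = \frac{1}{|S^{N-1}|}\int_{S^{N-1}} 1\,d\s$ up to normalization) shows it equals precisely $\mu_1$. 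Hence the second term contributes $\g\mu_1 w(\th)$, which with $w = Y_k^i$ gives $\g\mu_1 Y_k^i$.

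Combining the three contributions gives
$$
L(Y_k^i) = (\l_k - \l_1)Y_k^i - \g\mu_k Y_k^i + \g\mu_1 Y_k^i = \big(\l_k - \l_1 - \g(\mu_k - \mu_1)\big)Y_k^i,
$$
which is exactly $\s_\g(k)Y_k^i$. Since the $Y_k^i$ form an orthonormal basis of $L^2(S^{N-1})$ and $L$ acts as a scalar on each eigenspace $\cS_k$, this also confirms that $L$ is diagonalized by the spherical harmonics and that its spectrum is $\{\s_\g(k)\}_{k\in\N}$.

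The main obstacle I anticipate is not in this lemma itself but in justifying the clean use of the Funck--Hecke formula for the singular kernel $G_{\k,N}$ --- in particular, making sure the two nonlocal integrals in \eqref{eq:express-lin-sph-latt} are absolutely convergent and that the difference structure $(w(\th) - w(\s))$ is genuinely needed only for the integrability bookkeeping rather than for cancellation (for $N=2$ the $2K_0$ kernel is only logarithmically singular, so even $\int_{S^1} Y_k^i(\s)G_{\k,2}(|\th-\s|)\,d\s$ converges, and one can split the difference). A secondary point to be careful about is the constant-term computation: one must verify that the formula \eqref{eq:express-lin-sph-latt} has been correctly normalized so that the $(1-\th\cdot\s)$ integral reproduces $\mu_1$ and not some other multiple; this is where the identification $1-\th\cdot\s = \frac12|\th-\s|^2$ together with the Funck--Hecke eigenvalue for degree one keeps the constants consistent. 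Everything else is routine once Lemma~\ref{Regularity -cF-0-lattice-sph-cyl} is in hand.
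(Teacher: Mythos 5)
Your proof is correct and follows essentially the same route as the paper: apply the formula of Lemma \ref{Regularity -cF-0-lattice-sph-cyl} to $w=Y_k^i$, use $-\D_{S^{N-1}}Y_k^i=\l_k Y_k^i$ with $\l_1=N-1$, diagonalize the first nonlocal integral by Funck--Hecke to get $\mu_k$, and identify the constant $\int_{S^{N-1}}(1-\th\cdot\s)G_{\k,N}(|\th-\s|)\,d\s$ with $\mu_1$ by writing $1-\th\cdot\s=\tfrac12|\th-\s|^2=\tfrac12\sum_i(\th_i-\s_i)^2$ and relating to the degree-one harmonics. The only cosmetic difference is that the paper makes the evaluation of the constant slightly more explicit (integrating the rotationally invariant quantity over $\th$ and using $|S^{N-1}|=N|B^N|$), whereas you compress that step; the underlying argument is identical.
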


\begin{proof}
Using Fourier decomposition in spherical harmonics, see Section \ref{ss:prel-not}, we easily deduce that 
\begin{align*}
\s_{\g}(k)
  = \l_k-(N-1)   +\g  \int_{S^{N-1}}(1-\th\cdot \s ) G_{\k,N}\left(|\th -\s|  \right)  d\s \\
  - \frac{\g}{2}  \int_{S^{N-1}\times S^{N-1}}(Y_k(\th)-Y_k(\s))^2  G_{\k,N}\left(    |\th-\s |  \right)  d\th d\s,
\end{align*}
for every $Y_k\in \cS_k$.
We have that 
\begin{align*}
  \int_{S^{N-1}}(1-\th\cdot \s ) G_{\k,N}\left(|\th -\s|  \right)  d\s&= \frac{1}{2}\sum_{i=1}^N \int_{S^{N-1}}(\th_i-\s_i )^2 G_{\k,N}\left(|\th -\s|  \right)  d\s\\
    &=\frac{ |B^N|}{2}\sum_{i=1}^N   \int_{S^{N-1}}(Y^i_1(\th)-Y^i_1(\s) )^2 G_{\k,N}\left(|\th -\s|  \right)  d\s\\
  &=\frac{N |B^N|}{2}  \int_{S^{N-1}}(Y^i_1(\th)-Y^i_1(\s) )^2 G_{\k,N}\left(|\th -\s|  \right)  d\s.
  \end{align*}
  Since the  integral on the  left hand side above does not depend on $\th$, integrating the above equality  and using that $|S^{N-1}|=N|B^N|$, we get 
\begin{align*}
    \int_{S^{N-1}\times S^{N-1} }&(1-\th\cdot \s ) G_{\k,N}\left(|\th -\s|  \right)  d\s d\th\\
    &= \frac{ 1}{2}    \int_{S^{N-1}\times S^{N-1} }(Y^i_1(\th)-Y^i_1(\s) )^2 G_{\k,N}\left(|\th -\s|  \right)  d\s d\th\\
    &=  \mu_1.
 \end{align*}

\QED
\end{proof}

The following result  will be useful in order to study the invertibility of the linearized operator on the space of even function.
\begin{Lemma}\label{eq:Lambda-star-latt-sph}
We let     
\be \label{eq:gam-star-lattice-sph-cyl}
\g_N:=\min \left( \frac{\l_1}{  {\mu}_1}, \inf_{k\geq 2}\frac{\l_k-\l_1}{\mu_k-\mu_1} \right).
\ee
Then    for every $\g\in (0,\g_N)$ 
 \be \label{eq:sk-neq-0-latt-sph}
 \s_{\g}(k)> 0\qquad \textrm{ for all $k\geq 2$},  \qquad\textrm{ }\qquad    \s_{\g}(0)<0
 \ee
 and 
 \be\label{eq:asymp-sigk-latt-sph}
 \lim_{k\to \infty}  \frac{ \s_{ \g}(k)}{k^2}= 1.
 \ee
\end{Lemma}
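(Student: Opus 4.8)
The plan is to establish the three claims in Lemma~\ref{eq:Lambda-star-latt-sph} by unwinding the definition of $\sigma_{\g}(k)$ from Lemma~\ref{eq:def-linearized-eigen-latt-Del} together with the choice of $\g_N$ in \eqref{eq:gam-star-lattice-sph-cyl}. Recall $\sigma_{\g}(k) = (\l_k - \l_1) - \g(\mu_k - \mu_1)$ with $\l_k = k(k+N-2)$, so in particular $\sigma_{\g}(1) = 0$ always, $\sigma_{\g}(0) = -\l_1 - \g(\mu_0 - \mu_1) = -\l_1 + \g\mu_1$ since $\mu_0 = 0$ (the degree-zero spherical harmonic is constant, so $Y_0(\th) - Y_0(\s) \equiv 0$), and for $k \ge 2$ we have $\sigma_{\g}(k) = (\l_k - \l_1) - \g(\mu_k - \mu_1)$.

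For the first claim, I would argue as follows. For $\s_{\g}(0) < 0$: from $\g < \g_N \le \l_1/\mu_1$ we get $\g\mu_1 < \l_1$, hence $\s_{\g}(0) = \g\mu_1 - \l_1 < 0$; one should note here that $\mu_1 > 0$, which follows because $Y_1^i$ is non-constant and $G_{\k,N}$ is strictly positive, so the defining integral $\mu_1 = \tfrac12\int_{S^{N-1}\times S^{N-1}}(Y_1^i(\th) - Y_1^i(\s))^2 G_{\k,N}(|\th - \s|)\,d\th\,d\s$ is strictly positive. For $\s_{\g}(k) > 0$ when $k \ge 2$: first observe $\l_k - \l_1 > 0$ since $k \mapsto k(k+N-2)$ is strictly increasing on $\N$; next, $\mu_k - \mu_1$ could in principle have either sign, but the definition $\g_N = \min(\l_1/\mu_1, \inf_{k\ge2}(\l_k-\l_1)/(\mu_k-\mu_1))$ is set up precisely so that for those $k$ with $\mu_k - \mu_1 > 0$ we have $\g < (\l_k - \l_1)/(\mu_k - \mu_1)$, giving $\s_{\g}(k) = (\l_k - \l_1) - \g(\mu_k - \mu_1) > 0$; and for $k$ with $\mu_k - \mu_1 \le 0$ we trivially get $\s_{\g}(k) = (\l_k - \l_1) - \g(\mu_k - \mu_1) \ge \l_k - \l_1 > 0$ since $\g > 0$. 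One subtlety to address is that $\g_N$ as defined should be verified to be strictly positive, so that the interval $(0,\g_N)$ is nonempty — this requires knowing that $\inf_{k\ge2}(\l_k - \l_1)/(\mu_k - \mu_1)$, taken over the $k$ with $\mu_k > \mu_1$, is bounded below by a positive constant, which is exactly what the asymptotic \eqref{eq:asymp-sigk-latt-sph} will deliver.

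For the asymptotic \eqref{eq:asymp-sigk-latt-sph}, the key point is that $\l_k/k^2 \to 1$ as $k \to \infty$ (immediate from $\l_k = k^2 + (N-2)k$), so it suffices to show $\mu_k/k^2 \to 0$, or even just that $\mu_k$ stays bounded, or grows slower than $k^2$. The cleanest route is to bound $\mu_k$ using the bound $(Y_k^i(\th) - Y_k^i(\s))^2 \le 2 Y_k^i(\th)^2 + 2 Y_k^i(\s)^2$, which after integrating and using the normalization $\|Y_k^i\|_{L^2(S^{N-1})} = 1$ gives $\mu_k \le 2\int_{S^{N-1}} G_{\k,N}(|\th - \s|)\,d\s$, a constant independent of $k$ (finite because $G_{\k,N}$ has only a mild singularity: $2K_0(\k r) \sim -\log r$ for $N=2$ by \eqref{eq:decKnuInf0}, and $G_\k(r) = r^{-1}e^{-\k r}$ is integrable on $S^2$ for $N=3$). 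Hence $0 \le \mu_k \le C$ uniformly in $k$, so $\mu_k/k^2 \to 0$ and $\sigma_{\g}(k)/k^2 = \l_k/k^2 - \l_1/k^2 - \g(\mu_k - \mu_1)/k^2 \to 1$. I would organize the proof so this uniform bound on $\mu_k$ comes first, as it simultaneously gives \eqref{eq:asymp-sigk-latt-sph} and, via $\l_k - \l_1 \to \infty$ against bounded $\mu_k$, shows that $(\l_k - \l_1)/(\mu_k - \mu_1) \to +\infty$ along the relevant subsequence, hence the infimum defining $\g_N$ is attained at a positive value and $\g_N > 0$.

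The main obstacle I anticipate is not any single calculation but rather handling the sign ambiguity of $\mu_k - \mu_1$ carefully: one must not assume $\mu_k$ is monotone in $k$, and the case split (between $\mu_k > \mu_1$ and $\mu_k \le \mu_1$) must be made explicit so that the definition of $\g_N$ does its job in both cases. The other point requiring a little care is confirming $\g_N > 0$, which as noted reduces to the uniform boundedness of $\mu_k$; once that observation is in place the rest is routine.
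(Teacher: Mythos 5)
Your proof is correct and in fact takes a cleaner, more unified route than the paper for the key estimate. The paper proves the uniform control on $\mu_k$ by a dimension split: for $N=3$ it compares $\mu_k$ with the eigenvalues $\mu_k^0$ of the $\k=0$ kernel and cites FFMMM (Section 7) for $\lambda_k/\mu_k^0 \to \infty$, while for $N=2$ (where the comparison $G_{\k,2}\le G_{0,2}$ is unavailable, since $K_0(0)=\infty$) it works out an explicit trigonometric estimate for $\mu_k$ on $S^1$. Your elementary inequality $(Y_k^i(\th)-Y_k^i(\s))^2 \le 2\,Y_k^i(\th)^2 + 2\,Y_k^i(\s)^2$, combined with $\|Y_k^i\|_{L^2}=1$ and symmetry, gives $\mu_k \le 2\int_{S^{N-1}}G_{\k,N}(|e-\s|)\,d\s$ uniformly in $k$, and this constant is finite for both $N=2,3$ by \eqref{eq:decKnuInf0} and the integrability of $r^{-1}$ on $S^2$; this handles both dimensions at once, is self-contained, and delivers \eqref{eq:asymp-sigk-latt-sph} immediately. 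You are also more careful than the paper about the possible sign of $\mu_k-\mu_1$: the paper only notes $\mu_i\neq\mu_j$ and proceeds, whereas you explicitly split into $\mu_k>\mu_1$ (where the definition of $\g_N$ is operative) and $\mu_k\le\mu_1$ (where $\s_\g(k)\ge\l_k-\l_1>0$ trivially). One small caveat applies to both you and the paper: for the interval $(0,\g_N)$ to be nonempty as \eqref{eq:gam-star-lattice-sph-cyl} is written, the infimum must in effect be taken over $k\ge2$ with $\mu_k>\mu_1$; you flag this and handle it implicitly, and your uniform bound on $\mu_k$ then makes $\g_N>0$ immediate, whereas the paper leaves this point tacit.
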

\begin{proof}
We observe that, for $N=3$, 
$$
\mu_k\leq \mu_k^0:= \frac{1}{2}\int_{S^{N-1}\times S^{N-1}}(Y_k^i(\th)-Y_k^i(\s))^2 G_{0,N}( |\th-\s|) d\th d\s,
$$
and $\mu_k^0$ is the eigenvalues of the well known \textit {hypersingular Riesz operator
on the sphere} (\cite[Section 7]{FFMMM}). In this case,   we have that $\lim_{k\to\infty}\frac{\l_k}{\mu_k^0}=\infty$, see e.g.  \cite[Section 7]{FFMMM}.\\

 For $N=2$, we have       $Y_k^1(\cos(x),\sin(x))=\frac{\cos(k x)}{\pi^{1/2}}$ and $Y_k^2(\cos(x),\sin(x))=\frac{\cos(k x)}{\pi^{1/2}}$. Moreover $K_0(\k r)\leq C  |\log(\k r)|$.  Therefore, we can estimate 
\begin{align*}
\mu_k^0&\leq C \int_{0}^{2\pi}  (\cos(k x)-\cos(k y))^2|\log(|\k \sqrt{1-\cos(x-y)|})|dxdy\\
%
%
%
&\leq C \int_{0}^{\pi} \int_{0}^{\pi} |\log(\k \sqrt{2}|\sin(x-y)|)|dxdy,
\end{align*}
with $C$ independent on $k$. 
Since $\mu_i\neq\mu_j$ for $i\neq j,$ we
  can therefore  define 
$$
\g_N:=\min \left( \frac{\l_1}{\mu_1}, \inf_{k\geq 2}\frac{\l_k-\l_1}{\mu_k-\mu_1} \right).
$$
It follows that for every $\g\in (0, \g_N)$, we have $\s_\g(0)<0$ and $\s_\g(k)>0$ for every $k\geq 2$. 
\QED
\end{proof}
We then have the following result. 
\begin{Lemma}\label{lem:rang-L-star-latt-sph}
Let $\g\in (0, \g_N)$. 
Then the linear map $$\cL_{\textbf{e}}:=L\big|_{C^{2,\a}_{\textbf{e}}(S^{N-1})}: C^{2,\a}_{\textbf{e}}(S^{N-1}) \to C^{0,\a}_{\textbf{e}}(S^{N-1}) $$ is an isomorphism.
\end{Lemma}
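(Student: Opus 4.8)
The plan is to diagonalise the operator $L=D_u\cQ(0,1)$ in the spherical‑harmonic basis and to exploit that the even subspace is spanned by the eigenfunctions of \emph{even} degree. First I would record that a degree‑$k$ spherical harmonic $Y$ satisfies $Y(-\th)=(-1)^kY(\th)$, so that $f\in L^2(S^{N-1})$ is even if and only if its expansion $f=\sum_k\sum_i f_k^iY_k^i$ is supported on even $k$; in particular $H^j_{\textbf e}(S^{N-1})$ and $C^{2,\a}_{\textbf e}(S^{N-1})$ consist exactly of such functions, and since $L$ maps $Y_k^i$ to a multiple of itself it preserves evenness, so $\cL_{\textbf e}$ is well defined. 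By Lemma~\ref{eq:def-linearized-eigen-latt-Del} we have $LY_k^i=\s_\g(k)Y_k^i$, and by Lemma~\ref{eq:Lambda-star-latt-sph}, for $\g\in(0,\g_N)$ one has $\s_\g(0)<0$ and $\s_\g(k)>0$ for all $k\geq2$. The unique vanishing eigenvalue $\s_\g(1)=0$ occurs at the odd degree $k=1$, so $\s_\g(k)\neq0$ for every even $k$, and hence $\cL_{\textbf e}$ is injective.

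For surjectivity, fix $f\in C^{0,\a}_{\textbf e}(S^{N-1})\subset L^2_{\textbf e}(S^{N-1})$, write $f=\sum_{k\text{ even}}\sum_{i=1}^{n_k}f_k^iY_k^i$, and set $w:=\sum_{k\text{ even}}\sum_{i=1}^{n_k}\s_\g(k)^{-1}f_k^iY_k^i$. By \eqref{eq:asymp-sigk-latt-sph} we have $\s_\g(k)\sim k^2$, so together with $\s_\g(k)\neq0$ for all even $k$ the quantity $c_0:=\inf_{k\text{ even}}|\s_\g(k)|(1+k^2)^{-1}$ is strictly positive (only finitely many even $k$ could make the ratio small, and none makes it vanish). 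Consequently $\sum_{k\text{ even}}\sum_i(1+|k|^2)^2\,|\s_\g(k)^{-1}f_k^i|^2\leq c_0^{-2}\|f\|_{L^2}^2<\infty$, so $w\in H^2_{\textbf e}(S^{N-1})$ and $Lw=f$ in the $L^2$ sense.

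It remains to upgrade $w$ to $C^{2,\a}_{\textbf e}(S^{N-1})$, which I would do exactly as in the proof of Proposition~\ref{propPhi}(ii). By Morrey's embedding $w\in H^2(S^{N-1})\hookrightarrow C^{0,\a}(S^{N-1})$, using that $N-1\in\{1,2\}$ and $\a<1$. Rewriting $Lw=f$ by means of \eqref{eq:express-lin-sph-latt},
$$
-\D_{S^{N-1}}w=f+(N-1)w+\g\int_{S^{N-1}}(w(\th)-w(\s))G_{\k,N}(|\th-\s|)\,d\s-\g\,w(\th)\int_{S^{N-1}}(1-\th\cdot\s)G_{\k,N}(|\th-\s|)\,d\s.
$$
The kernel $G_{\k,N}(|\th-\s|)$ is integrable over $S^{N-1}$ — it has an integrable $|\th-\s|^{-1}$ singularity for $N=3$ and a logarithmic one for $N=2$ — and the difference structure $w(\th)-w(\s)$ absorbs one Hölder power, so a direct estimate (splitting into near‑ and far‑diagonal parts, as for the function $F$ in the proof of Proposition~\ref{propPhi}(ii)) shows that both nonlocal terms lie in $C^{0,\a}(S^{N-1})$ with norm bounded by a constant times $\|w\|_{C^{0,\a}(S^{N-1})}$. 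Hence the right‑hand side is in $C^{0,\a}(S^{N-1})$, and elliptic (Schauder) regularity for $-\D_{S^{N-1}}$ on the compact manifold $S^{N-1}$ yields $w\in C^{2,\a}(S^{N-1})$; being even, $w\in C^{2,\a}_{\textbf e}(S^{N-1})$. Thus $\cL_{\textbf e}$ is a bounded linear bijection between Banach spaces — boundedness coming from the smoothness of $\cQ$ in Lemma~\ref{Regularity -cF-0-lattice-sph-cyl} — hence an isomorphism by the open mapping theorem.

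The step I expect to be the main obstacle is the Hölder mapping property of the nonlocal terms for $N=2$: the logarithm $2K_0(\k r)$ is integrable, but bounding the $C^{0,\a}$‑seminorm of $\int_{S^1}(w(\th)-w(\s))G_{\k,2}(|\th-\s|)\,d\s$ requires a careful near‑diagonal/far‑diagonal splitting; by contrast, the positivity of $c_0$ used in the surjectivity step is an immediate consequence of \eqref{eq:asymp-sigk-latt-sph}.
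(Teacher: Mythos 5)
Your proposal follows the same strategy as the paper: diagonalize $L$ in spherical harmonics, observe that the kernel of $L$ is the eigenspace at $k=1$ (odd degree), so injectivity on the even subspace holds; construct an $H^2_{\textbf e}$ solution $w$ via the Fourier series and the asymptotics $\s_\g(k)\sim k^2$; then upgrade $w$ to $C^{2,\a}$ via Morrey embedding, a H\"older estimate on the nonlocal term, and Schauder theory. The one point where your sketch is lighter than the paper is the H\"older estimate on the nonlocal term. You invoke the near/far-diagonal splitting ``as for the function $F$ in the proof of Proposition~\ref{propPhi}(ii)'' --- but that proposition lives on $\R^2$, where the convolution structure $r\mapsto w(t-r)$ makes the kernel translation-invariant and the splitting is immediate. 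On $S^{N-1}$ the kernel $G_{\k,N}(|\th-\s|)$ moves with $\th$, and the paper handles this by the rotation map $\th\mapsto R_\th$ from \cite{Cabre2015B} (Example~\ref{example-rotations}), rewriting $F(\th)=\int_{S^{N-1}}\bigl(w(\th)-w(R_\th\s)\bigr)G_{\k,N}(|e-\s|)\,d\s$ so that the kernel is frozen and the H\"older seminorm estimate involves only $w$; this also produces a logarithmic loss (so $F\in C^{0,\b}$ for all $\b<1$, which is why one needs Morrey's embedding to give $w\in C^{0,\a}$ for \emph{all} $\a<1$). Your end conclusion and every other step are correct, but the rotation device is the genuine idea at this point and your sketch treats it as routine; you should spell it out rather than lean on the $\R^2$ analogy.
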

\begin{proof}
By Lemma \ref{eq:Lambda-star-latt-sph} the elements of the Kernel of $L$ correspond with the eigenvalue $\s_\g(1)$ which is the eigenvalues corresponding to   the first non-constant  spherical harmonics $Y_1^1$ and $Y_1^2$ which  are odd. This implies that the kernel of $\cL_{\textbf{e}}$ is  $ \{0\}$.   By Lemma \ref{eq:Lambda-star-latt-sph}, for every $f\in Y\in L^2_{\textbf{e}}(S^{N-1})$, there exists a unique $w\in H^2_{\textbf{e}}(S^{N-1})$ such that $L(w)=f$.  Since $2\leq N\leq 3$,  by Morrey's embedding theorem, $w\in C^{0,\a}(S^{N-1})$ for every $\a\in (0,1)$.\\
We define
$$
F(\th)= \int_{S^{N-1}}(w(\th)-w(\s)) G_{\k,N}( |\th -\s| )  d\s
$$
so that 
\begin{align}\label{eq:Lw-eq-F-f}
L(w) =-\D_{S^{N-1}} w(\th) -  \l_1 w(\th ) +2\g  \mu_1 w(\th ) - 2 \g   F(\th)= f    .
\end{align}

Then for every $\th\in S^{N-1}$, 
\be \label{eq:F-in-Holder}
|F(\th)|\leq C \|w\|_{C^{0,\a}(S^{N-1})} \int_{S}|\th -\s|^\a G_{\k,N}( |\th -\s| )  d\s\leq   C \|w\|_{C^{0,\a}(S^{N-1})} .
\ee
We fix $e \in S^{N-1}$ and $R_\th$ be given by Example \ref{example-rotations}.  
Thanks to \eqref{eq:Rota-isom},   a change of variable gives
$$
F(\th)=  \int_{S^{N-1}}(w(\th)-w(R_\th \s)) G_{\k,N}( |e -\s| )  d\s.
$$
By \eqref{eq:Rota-isom} and \eqref{eq:R-Lipschitz},  for all $\th_1,\th_2\in S_e$ and $\s\in S^{N-1}$, we have 
$$
|(w(\th_1)-w(R_{\th_1} \s))-(w(\th_2)-w(R_{\th_2} \s)) |\leq C  \|w\|_{C^{0,\a}(S)} \int_{S} \min(  |e -\s|^\a, |\th_1-\th_2|^\a ).
$$
We assume that $|\th_1-\th_2 |\leq 1/4$.   Then by \eqref{eq:decKnuInf0}, we have 
\begin{align*}
&|F(\th_1)-F(\th_2)|\leq C \|w\|_{C^{0,\a}(S^{N-1})}\int_{S^{N-1}}  \min(  |e -\s|^\a, |\th_1-\th_2|^\a )    |e -\s|^{-1} d\s\\
&\leq  C  \|w\|_{C^{0,\a}(S^{N-1})}  \int_{ |e -\s| <|\th_1-\th_2|}  |e -\s|^{-1+\a}   d\s\\
&\quad+ C  \|w\|_{C^{0,\a}(S^{N-1})} |\th_1-\th_2|^\a  \int_{1>  |e -\s|\geq |\th_1-\th_2|} |e -\s|^{-1}  d\s\\
&\leq  C   \|w\|_{C^{0,\a}(S^{N-1})}  |\th_1-\th_2|^\a(-\log(|\th_1-\th_2 |)).
\end{align*}
From this and \eqref{eq:F-in-Holder}, 
it then follows that $F\in C^{0,\b}(S^{N-1})$ for every $\b\in (0,1)$. Now from \eqref{eq:Lw-eq-F-f}, we can apply standard   elliptic regularity estimates to  deduce that $w\in C^{2,\b}_{\textbf{e}}(S^{N-1})$.
\QED
\end{proof}
\subsubsection{Proof  of Theorem   \ref{th:latt-cyl} and \ref{th:latt-sph} (completed)}
We  recall that    the  function $$\cQ: \R \times   C^{2,\a}_{\textbf{e}}(S^{N-1})\cap \cD \to  C^{0,\a}_{\textbf{e}}(S^{N-1}) $$ is well-defined and   smooth in a neighborhood of $(0, 1)\in \R\times  C^{2,\a}_{\textbf{e}}(S^{N-1})\cap \cD$. Moreover, by Lemma \ref{lem:rang-L-star-latt-sph}, the linear map  $D_\vp  \cQ(0, 1)=\cL_e : X \to Y $ is an isomorphism. 
 We can therefore apply the implicit function theorem  to obtain a  constant $\e_0 >0$ and a smooth curve
\be
(-\e_0,\e_0)  \longrightarrow  X ,\qquad \e\mapsto  \o_{\e}
\ee
such that  $\|\o_\e\|_{C^{2,\a}(S^{N-1})}\to 0$ as $\e\to 0$  and 
\be \label{eq:latt-solve}
\cQ(\e, \o_{\e})=\cQ(0, 1) \qquad \textrm{  on $ C^{0,\a}_{\textbf{e}}(S^{N-1})$.}
\ee
  Moreover it is easy to see from \eqref{es:-cFp-eps} and \eqref{eq:latt-solve} that 
\be \label{eq:est-o-eps-latt-sph-cyl}
\|\o_\e\|_{C^{2,\a}(S^{N-1})}\leq  \|\cL_e^{-1} (\cF(\e,1+\o_\e))\|_{ C^{2,\a}(S^{N-1})} \leq e^{-\frac{c}{|\e|} },
\ee
for some positive constants $c$.
\QED
\subsection{ Remarks on the perturbations of near-cylinders and near-balls}\label{s:Remarks}
The aim of this section is to show that the perturbation $\vp_\e$ and $\phi_\e$ from  Theorem   \ref{th:latt-cyl} and \ref{th:latt-sph} are not constant in the respective  lower dimensional lattice $M=1$ (for cylinder) and $1\leq M\leq 2$ (for the balls).  This is equivalent to show that $\cC_{1}^\e$ and $\cB_{1}^\e$ are not equilibrium patterns,  for small $\e$.
\begin{Proposition}
Let $\vp_\e$ and $\phi_\e$ be given by    Theorem \ref{th:latt-cyl} and  \ref{th:latt-sph}, respectively,  and let $1\leq M\leq N$, with $N=2,3$. Then 
$$
\vp_\e(\th)=1-  \cL_e^{-1}(U_\e)(\th)+ \e^{5/2}  O\left( \sum_{p\in\scrL_*^M} {|p|^{-3/2}} e^{ - \k\frac{|p|}{\e}} \right)
$$ 
and 
 $$
\phi_\e(\th)=1-  \cL_{\textbf{e}}^{-1}(V_\e)(\th)+ \e^2    O\left(\sum_{p\in\scrL_*^M} |p|^{-1}e^{-\k\frac{|p|}{\e} }\right).
$$
Here   $(\cL_{\textbf{e}})^{-1}: C^{0,\a}_{\textbf{e}}(S^{N-1})\to C^{2,\a}_{\textbf{e}}(S^{N-1})$ denotes the inverse of $\cL_{\textbf{e}}$ (see \ref{lem:rang-L-star-latt-sph}).  Moreover 
$$
U_\e(\th)=  \sum_{p\in\scrL_*^M} \cosh\left(\k \frac{\th\cdot p }{|p|}\right)  \xi_\e(|p|),\qquad V_\e(\th)=  \sum_{p\in\scrL_*^M}  \cosh\left(\k \frac{\th\cdot p }{|p|}\right)  \z_\e(|p|),
$$
with 
$$
\xi_\e(\ell)= c_2\frac{\sqrt{\pi}}{\sqrt{2}} \frac{\e^{3/2}}{\ell^{5/2}} e^{ -\k  \frac{\ell}{\e}},\qquad \xi_\e(\ell)= c_3  \frac{\e}{\ell} e^{ -\k  \frac{\ell}{\e}},
$$
where 
 $$
c_N:=    \int_{B^N}e^{-\k y\cdot e_1}\,dy.
 $$
Moreover if $M=1$ then $\vp_\e$ is not constant and if $M\leq 2$ then $\phi_\e$ is not constant.
\end{Proposition}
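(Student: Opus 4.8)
The plan is to post-process the branch $\e\mapsto\phi_\e$ (resp.\ $\e\mapsto\vp_\e$) produced by the implicit function theorem in the proof of Theorems~\ref{th:latt-cyl}--\ref{th:latt-sph}, and peel off its leading, exponentially small term. Write $\o_\e:=\phi_\e-1$ (resp.\ $\vp_\e-1$); from $\cQ(\e,1+\o_\e)=\cQ(0,1)$ we get $\cL_{\textbf{e}}\,\o_\e=-\g\,\cF(\e,1+\o_\e)-\cR(\o_\e)$, where $\cR(\o_\e):=[\cH(1+\o_\e)-\cH(1)-D\cH(1)\o_\e]+\g[\cF_0(1+\o_\e)-\cF_0(1)-D\cF_0(1)\o_\e]$ is the quadratic remainder of the local part, so $\|\cR(\o_\e)\|_{C^{0,\a}}\le C\|\o_\e\|_{C^{2,\a}}^2$. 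I would first observe that the argument giving \eqref{es:-cFp-eps} applies verbatim to the $\phi$-derivatives of $\cF(\e,\cdot)$, so $\cF$ and $D_\phi\cF$ are $O(e^{-c/|\e|})$ near $\phi=1$; combined with the a priori bound $\|\o_\e\|_{C^{2,\a}}\le e^{-c/|\e|}$ of \eqref{eq:est-o-eps-latt-sph-cyl}, this gives $\cF(\e,1+\o_\e)=\cF(\e,1)+O(e^{-2c/|\e|})$ and $\cR(\o_\e)=O(e^{-2c/|\e|})$. Applying $\cL_{\textbf{e}}^{-1}$, bounded by Lemma~\ref{lem:rang-L-star-latt-sph}, yields $\o_\e=-\g\,\cL_{\textbf{e}}^{-1}\bigl(\cF(\e,1)\bigr)+(\text{term of exponential rate }2c/|\e|)$.

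The heart of the proof is the asymptotic evaluation of $\cF(\e,1)=\sum_{p\in\scrL_*^M}\cF_p(\e,1)$, where $\cF_p(\e,1)(\th)=\int_{B^N}G_{\k,N}\bigl(|\th-y-p/|\e||\bigr)\,dy$. Since $|p|/|\e|$ is large I would expand $|\th-y-p/|\e||=\tfrac{|p|}{|\e|}-\tfrac{p}{|p|}\cdot(\th-y)+O(|\e|/|p|)$, insert it into the large-argument asymptotics \eqref{eq:decKnuInf} of the potential (namely $G_\k(r)\sim r^{-1}e^{-\k r}$ for $N=3$ and $2K_0(\k r)\sim\sqrt{2\pi}(\k r)^{-1/2}e^{-\k r}$ for $N=2$), and carry out a Laplace/Watson-type expansion of the integral over $B^N$. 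This gives $\cF_p(\e,1)(\th)=e^{-\k|p|/|\e|}\,e^{\k\,\frac{p}{|p|}\cdot\th}\Bigl(\xi^{(N)}_\e(|p|)\int_{B^N}e^{-\k\,\frac{p}{|p|}\cdot y}\,dy\Bigr)\bigl(1+O(|\e|/|p|)\bigr)$, and rotational invariance of $B^N$ identifies $\int_{B^N}e^{-\k\,\frac{p}{|p|}\cdot y}\,dy$ with the $\th$- and $p$-independent constant $c_N=\int_{B^N}e^{-\k y\cdot e_1}\,dy$. Summing over $p$, the lattice symmetry $\scrL_*^M=-\scrL_*^M$ pairs $p$ with $-p$ and converts $e^{\k\,\frac{p}{|p|}\cdot\th}$ into $\cosh\!\bigl(\k\,\frac{p}{|p|}\cdot\th\bigr)$, producing exactly $U_\e$ (resp.\ $V_\e$); bounding the $O(|\e|/|p|)$ corrections termwise yields the stated remainders $\e^{5/2}O(\sum_p|p|^{-3/2}e^{-\k|p|/\e})$ and $\e^2O(\sum_p|p|^{-1}e^{-\k|p|/\e})$. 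Since this also pins the leading exponential rate at $e^{-\k\ell_0/|\e|}$, $\ell_0:=\min\{|p|:p\in\scrL_*^M\}$, a bootstrap shows the rate-$2c/|\e|$ error from Step~1 is subordinate, so the two displayed expansions follow.

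For the last assertion it suffices to show $\cL_{\textbf{e}}^{-1}(U_\e)$ (resp.\ $\cL_{\textbf{e}}^{-1}(V_\e)$) is non-constant for $|\e|$ small. Since $\cL_{\textbf{e}}$ maps the constants bijectively onto the constants (the constant function being the eigenfunction with eigenvalue $\s_\g(0)\ne0$) and preserves the orthogonal complement of the constants with spectrum bounded away from $0$, the operator $\cL_{\textbf{e}}^{-1}$ preserves non-constancy with comparable oscillation; so it is enough that $U_\e$ (resp.\ $V_\e$) be non-constant. As $|\e|\to0$ the sum defining $U_\e$ is dominated by the finitely many minimal lattice vectors, so up to a strictly smaller error $U_\e(\th)=\xi_\e(\ell_0)\sum_{|p|=\ell_0}\cosh\!\bigl(\k\,\frac{p}{|p|}\cdot\th\bigr)$ with $\xi_\e(\ell_0)>0$. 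If $M\le N-1$, all these vectors lie in the proper subspace $W:=\spann(\textbf{a}_1,\dots,\textbf{a}_M)\subsetneq\R^N$: at $\th\in S^{N-1}\cap W^\perp$ the right-hand side equals $\xi_\e(\ell_0)\cdot\#\{p:|p|=\ell_0\}$, whereas at $\th=p/|p|$ for a minimal $p$ one summand equals $\cosh(\k)>1$, so the value is strictly larger. Hence $U_\e$ is non-constant with oscillation $\asymp\xi_\e(\ell_0)$, which dominates the remainder for small $|\e|$; the same argument applies to $V_\e$. This gives non-constancy of $\vp_\e$ when $M=1$ and of $\phi_\e$ when $M\le2$.

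The step I expect to be hardest is the asymptotic evaluation of $\cF_p(\e,1)$ with error bounds uniform over all $p\in\scrL_*^M$ simultaneously, and especially in dimension $N=2$: there the leading term of $K_0$ alone is insufficient, so the expansion of the $B^2$-integral must be pushed one order to isolate the constant $c_2$ together with a remainder of the claimed size, and keeping the bookkeeping of powers of $|\e|$ and $|p|$ honest --- so that the remainder stays genuinely subordinate to $U_\e$, uniformly in $\th$ --- is the delicate point.
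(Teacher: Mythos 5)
Your proposal is correct and takes essentially the same route as the paper: reduce, via the implicit-function-theorem structure of the construction, to the asymptotic evaluation of $\cF(\e,1)=\sum_{p\in\scrL_*^M}\cF_p(\e,1)$; expand each $\cF_p$ by a Laplace/Watson argument using the large-argument asymptotics of $K_0$ and of $G_\k$; use $\scrL^M_*=-\scrL^M_*$ to produce the $\cosh$ factors and hence $U_\e$, $V_\e$; and detect non-constancy by comparing the value at a direction orthogonal to $\spann(\textbf{a}_1,\dots,\textbf{a}_M)$ with the value in a minimal lattice direction. Your write-up is in fact more explicit than the paper's on the fixed-point bookkeeping --- you isolate the equation $\cL_{\textbf{e}}\,\o_\e=-\g\cF(\e,1+\o_\e)-\cR(\o_\e)$, bound the quadratic remainder, observe that a bootstrap is needed to upgrade the crude rate $e^{-c/|\e|}$ of \eqref{eq:est-o-eps-latt-sph-cyl} (whose exponent is only $\k\ell_0/(2|\e|)$) to the sharp rate $e^{-\k\ell_0/|\e|}$ so that the correction is genuinely subordinate to $U_\e$, and spell out why $\cL_{\textbf{e}}^{-1}$ preserves oscillation --- all of which the paper compresses into ``it follows from the construction, via the Implicit Function Theorem.'' Note also that your derivation naturally carries the factor $\g$ in front of $\cL_{\textbf{e}}^{-1}(U_\e)$ (since the nonlocal term in $\cQ$ is $\g\cF$), which the paper's displayed formulas omit; since $\g>0$ this does not affect the non-constancy conclusion, and you may wish to flag this as a harmless discrepancy when writing things up.
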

\proof
  We  consider  the function $f_\e: S^1\to \R$, given by 
$$
f_\e (\th)=\int_{\cC_1^\e\setminus C_1}G_\k(|\th-y-\frac{p}{\e}|) dy=2 \sum_{p\in \scrL^M_*} \int_{B_1^2}K_0(\k |\th-z-\frac{p}{\e}|) dz.
$$
For $\e>0$,  we write
 $$
 |\th-z-\frac{p}{\e}|=\frac{|p|}{\e}(1+m_\e)^{1/2}=\frac{|p|}{\e}\left(1+\frac{1}{2}m_\e+ O(\e^2/|p|)\right),
 $$
 where 
 $$
 m_\e=\frac{\e^2|y-\th|+2\e(y-\th)\cdot p}{|p |^2}. 
 $$
Therefore
$$
|\th-z-\frac{p}{\e}|=\frac{|p|}{\e}\left(1+ \frac{\e(y-\th)\cdot p }{|p|^2}+ O(\e^2/|p|)\right).
$$
By \eqref{eq:decKnuInf}, 
$$
K_0(r)\sim \frac{\sqrt{\pi }}{\sqrt{2}} r^{-1/2}e^{-r} \qquad \textrm{  as
$r\to +\infty$} .
$$
We then deduce that, as $\e\to 0$, 
\begin{align*}
\frac{2\sqrt{2 }}{\sqrt{\pi }} K_0( |\th-z-\frac{p}{\e}|) &=\frac{|p|^{-1/2}}{\e^{-1/2}}\left(1- \frac{\e(y-\th)\cdot p }{2|p|^2}+ O(\e^2/|p|)\right)e^{ - \k \frac{|p|}{\e}} e^{-\k \frac{(y-\th)\cdot p }{|p|} } e^{O(\e)}\\
&=\left( \frac{\e^{3/2}}{|p|^{5/2}} e^{ - \k \frac{|p|}{\e}} e^{-\k \frac{(y-\th)\cdot p }{|p|} } + O( {\e^{5/2}}{|p|^{-3/2}} e^{ -\k \frac{|p|}{\e}})\right)e^{O(\e)} \\
&=\left( \frac{\e^{3/2}}{|p|^{5/2}} e^{ - \k\frac{|p|}{\e}} e^{-\k \frac{y\cdot p }{|p|} } e^{\k\frac{\th\cdot p }{|p|} } + O( {\e^{5/2}}{|p|^{-3/2}} e^{ - \k\frac{|p|}{\e}}) \right) .
\end{align*}
We define 
 $$
c_2:=  \int_{B^2}e^{-\k \frac{y\cdot p }{|p|} }\,dy= \int_{B^2}e^{-\k y\cdot e_1}\,dy
 $$
 and 
\be \label{eq:def-xi}
\xi_\e(\ell)= c_2\frac{\sqrt{\pi}}{\sqrt{2}} \frac{\e^{3/2}}{\ell^{5/2}} e^{ -\k  \frac{\ell}{\e}}  .
\ee
To see that $f_\e$ is not constant, it suffices to check that the map
$$
\th\mapsto U_\e(\th):= \sum_{p\in\scrL_*^M} e^{-\k \frac{\th\cdot p }{|p|} } \xi_\e(|p|)
$$
is not constant.
Since  $\scrL_*^M=-\scrL_*^M$, we deduce that
$$ 
U_\e(\th):= \sum_{p\in\scrL_*^M} \cosh \left(\k \frac{\th\cdot p }{|p|} \right) \xi_\e(|p|).
$$
We then conclude that 
\be \label{eq:Taylor-of-f-eps}
f_\e(\th)=  \int_{\cC_1^\e\setminus C_1}G_\k(|\th-y-\frac{p}{\e}|) dy=    U_\e(\th)+\e^{5/2}  O\left( \sum_{p\in\scrL_*^M} {|p|^{-3/2}} e^{ - \k\frac{|p|}{\e}} \right).
\ee
Provided $M=1$, we may assume that $\scrL^1=\Z\times \{0\}\subset\R^2$ the subspace of $\R^2= \textrm{span}\{e_1,e_2\}$ with $e_1=(1,0)$ and $e_2=(0,1)$. Therefore  we have  that $e_2\cdot p=0$ and thus 
$$
U_\e(e_2)=\sum_{p\in\Z_*}  \xi_\e(|p|).
$$
However
$$
U_\e(e_1)= \cosh(\k)\sum_{p\in\Z_*}   \xi_\e(|p|).
$$
This implies  that $U_\e(e_1)> U_\e(e_2)$.  That is $\cH_{ \scrC_1^\e}(e_1)\neq \cH_{ \scrC_1^\e}(e_2)$, for $\e>0$ small.\\

We now turn to the sphere lattices and provide Taylor expansion of  $\phi_\e$ in Theorem \ref{th:latt-sph}, and prove that it is not constant for $M\leq 2$. We adopt the same strategy as a above. We  consider  the function $F_\e: S^2\to \R$, given by 
$$
F_\e (\th)=  \int_{\cB_1^\e\setminus B_1^2}G_\k(|\th-y |) dy= \sum_{p\in \scrL^M_*} \int_{B_1^2}G_\k( |\th-z-\frac{p}{\e}|) dz.
$$
Using the Taylor expansions as above, we get  
\begin{align*}
 G_\k( |\th-z-\frac{p}{\e}|)&=\frac{\e}{|p|}\left(1- \frac{\e(y-\th)\cdot p }{|p|^2}+ O(\e^2/|p|)\right) e^{-\k\frac{|p|}{\e}\left(1+ \frac{\e(y-\th)\cdot p }{|p|^2}+ O(\e^2/|p|)\right) }\\
 &= \frac{\e}{|p|}e^{-\k\frac{|p|}{\e} } e^{O(\e)} e^{-\k (y-\th)\cdot \frac{p}{|p|}} (1+O(\e^2/|p|))\\
 &= \frac{\e}{|p|} e^{-\k\frac{|p|}{\e} } e^{-\k (y-\th)\cdot \frac{p}{|p|}}+ O\left(\frac{ \e^2}{|p|} e^{-\k\frac{|p|}{\e} }\right).
\end{align*}
We define 
$$
c_2:= \int_{B^3} e^{-\k y\cdot \frac{p}{|p|}}dy = \int_{B^3} e^{-\k y\cdot e_1}dy
$$
and    
\be \label{eq:def-z}
\z_\e(\ell):= c_1 \frac{\e}{\ell} e^{ -\k \frac{\ell}{\e}}  .
\ee
Letting
$$
V_\e(\th):= \sum_{p\in\scrL_*} e^{\frac{\th\cdot p }{|p|} \k} \z_\e(|p|),
$$
it  follows that 
\be \label{eq:Taylor-of-F-eps}
F_\e(\th)=\sum_{p\in\scrL_*^M} G_\k( |\th-z-\frac{p}{\e}|)=V_\e(\th)+\e^2    O\left(\sum_{p\in\scrL_*^M} |p|^{-1}e^{-\k\frac{|p|}{\e} }\right).
\ee
Since $\scrL_*^M=-\scrL_*^M$, we deduce that
$$ 
V_\e(\th):= \sum_{p\in\scrL_*^M} \cosh \left(\k\frac{\th\cdot p }{|p|} \right) \z_\e(|p|)=\sum_{\ell\in \N_*} \sum_{p\in E_\ell}\cosh(\k\th\cdot \frac{p}{\ell})\z_\e(\ell),
$$
where $E_\ell:=\{p\in \scrL_*^M\,:|p|=\ell\}$. 
We are therefore reduced to prove that $V_\e$ is not constant.  Now if
  $M\leq 2$,  then   $e_3\cdot p=0$ and thus 
$$
V_\e(e_3)=\sum_{\ell \in \N_*}| E_\ell|\z_\e(\ell).
$$
We have 
$$ 
V_\e(e_1):= \sum_{\ell\in \N_*} \sum_{p\in E_\ell}\cosh(\k e_1\cdot \frac{p}{\ell})\z_\e(\ell).
$$
It is clear that for some  $\ell\in\N_*$, we must have $\sum_{p\in E_\ell}\cosh(\k e_1\cdot \frac{p}{\ell})>| E_\ell|$, since $e_1 $ cannot be perpendicular to both $\textbf{a}_1$  and   $\textbf{a}_2$.

It follows from the construction, via the Implicit Function Theorem, of near-cylinder and the near- sphere lattices that, $\vp_\e$  in Theorem \ref{th:latt-cyl} has the following Taylor expansion
$$
\vp_\e=1-  \cL_{\textbf{e}}^{-1}(U_\e)+ \e^{5/2}  O\left( \sum_{p\in\scrL_*} {|p|^{-3/2}} e^{ - \k\frac{|p|}{\e}} \right).
$$ 
and   $\phi_\e$  in Theorem \ref{th:latt-sph}  Taylor expands as 
 $$
\phi_\e =1-  \cL_{\textbf{e}}^{-1}(V_\e)+ \e^2    O\left(\sum_{p\in\scrL_*} |p|^{-1}e^{-\k\frac{|p|}{\e} }\right).
$$
Here $\cL_{\textbf{e}}$ is given by    Lemma \ref{lem:rang-L-star-latt-sph}  (with the corresponding dimensions $N=2,3$) and 
$$
U_\e(\th)=  \sum_{p\in\scrL_*^M} \cosh\left(\k \frac{\th\cdot p }{|p|}\right)  \xi_\e(|p|),\qquad V_\e(\th)=  \sum_{p\in\scrL_*^M}  \cosh\left(\k \frac{\th\cdot p }{|p|}\right)  \z_\e(|p|),
$$
with $\xi_\e$ and $\z$ defined in \eqref{eq:def-xi} and \eqref{eq:def-z} respectively.  The above expressions of $\vp_\e$ and $\phi_\e$ follow from 
\eqref{eq:Taylor-of-f-eps} and  \eqref{eq:Taylor-of-F-eps} without any restriction on $M$, the dimension of the lattices. 
\QED
%
%

%
\section{Appendix: Proof of Lemma \ref{Regularity -cF-0-lattice-sph-cyl} }\label{ss:reg-cF_0-latt}
We  prove  the regularity of the map $\cQ: \cD\to C^{0,\a}(S^{N-1})$. We note that  $\cH$ and $\cF$ are smooth in $\cD$ and $(-\e_0,\e_0)\times \cD$, for small $\e_0$, respectively.  We  will only consider $\cF_{0,N}$ in the following. Recalling the notations in  Section \ref{ss:prel-not}, we have  the following result.
\begin{Lemma}\label{lem:smooth-cF-0-N}
The map $\cF_{0,N}: \cD\to C^{0,\a}(  S^{N-1})$ is smooth. Moreover for every $\phi\in \cD$  and $w\in C^{2,\a}(  S^{N-1})$, 
 \begin{align}\label{eq:deriv-cF_0-full}
 D \cF_{0,N}( \phi)[w](\th)  =\int_{S^{N-1}}{(w(\s)\s-w(\th)\th)\cdot \nu_\phi(\s)}G_{\k,N}(|F_\phi(\th)-F_\phi(\s) | )\G_\phi(\s)\, d\s  ,
 \end{align}
 where $F_\phi:=F_\phi(1, \cdot)$.
\end{Lemma}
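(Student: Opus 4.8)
The plan is to pass to a $\phi$-independent domain of integration, factor out the diagonal singularity of the kernel, and then run the Faà di Bruno bookkeeping exactly as in the proof of Lemma \ref{lem:Diff-of-cF-slab}. Writing $Z_0 = F_\phi(1,\theta) = \phi(\theta)\theta$, passing to polar coordinates on $B_\phi^N$ and substituting $\rho = r\phi(\sigma)$ gives
$$
\cF_{0,N}(\phi)(\theta) = \int_{S^{N-1}}\int_0^1 G_{\kappa,N}\bigl(\sqrt{A_\phi(r,\theta,\sigma)}\,\bigr)\,r^{N-1}\phi(\sigma)^N\,dr\,d\sigma,
$$
where, using $\theta\cdot\sigma = 1 - \tfrac12|\theta-\sigma|^2$,
$$
A_\phi(r,\theta,\sigma) := \bigl|\phi(\theta)\theta - r\phi(\sigma)\sigma\bigr|^2 = \bigl(\phi(\theta)-r\phi(\sigma)\bigr)^2 + r\,\phi(\theta)\phi(\sigma)\,|\theta-\sigma|^2 .
$$
The key elementary observations are: $A_\phi$ is a \emph{quadratic} polynomial in $\phi$, so $D^k_\phi A_\phi \equiv 0$ for $k\geq 3$; on the region where $1-r$ and $|\theta-\sigma|$ are small one has $A_\phi \asymp \rho^2$ with $\rho^2 := (1-r)^2 + |\theta-\sigma|^2$, uniformly for $\phi$ in a fixed neighbourhood of any $\phi_0\in\cD$ (the lower bound comes from $(\phi(\theta)-r\phi(\sigma))^2$ when $|\theta-\sigma|\lesssim 1-r$ and from $r\phi(\theta)\phi(\sigma)|\theta-\sigma|^2$ otherwise, using $\phi\in C^1$); and $|D^k_\phi A_\phi(r,\theta,\sigma)[w_1,\dots,w_k]| \leq c\,\rho^2 \prod_j \|w_j\|_{C^1(S^{N-1})}$ for $k=1,2$. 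Hence $B_\phi := A_\phi/\rho^2$ is valued in a fixed compact subinterval of $(0,\infty)$ and is a smooth map of $\phi$ all of whose Fréchet derivatives are bounded, uniformly in $(r,\theta,\sigma)$.

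Next I would isolate the singular part of the kernel using the scaling $G_{\kappa,N}(\lambda t) = \lambda^{-(N-2)}G_{\lambda\kappa,N}(t)$, which gives $G_{\kappa,N}(\sqrt{A_\phi}) = \rho^{-(N-2)}\,G_{\kappa\rho,N}(\sqrt{B_\phi})$. For $N=3$ the function $x\mapsto G_{\kappa\rho,3}(\sqrt x) = x^{-1/2}e^{-\kappa\rho\sqrt x}$ has all $x$-derivatives bounded on the relevant compact $x$-interval, uniformly in $\rho\in(0,1]$, so $\rho^{-(N-2)} = \rho^{-1}$ is a $\phi$-independent weight that is integrable over the $N$-dimensional domain; this is precisely the slab situation and the argument of Lemma \ref{lem:Diff-of-cF-slab} transcribes. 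For $N=2$, $G_{\kappa\rho,2}(\sqrt x) = 2K_0(\kappa\rho\sqrt x)$ blows up like $-2\log\rho$ as $\rho\to0$, which is the extra difficulty; to absorb it I would instead write, using $K_0(z) = -I_0(z)\log(z/2) - \gamma I_0(z) + (\text{even entire function of }z)$,
$$
G_{\kappa,2}\bigl(\sqrt{A_\phi}\,\bigr) = -\log A_\phi + \widetilde R_\kappa(A_\phi), \qquad \widetilde R_\kappa(A) = \bigl(1-I_0(\kappa\sqrt A)\bigr)\log A + (\text{analytic in }A),
$$
and note that $1 - I_0(\kappa\sqrt A) = A\cdot(\text{analytic in }A)$, so that $\widetilde R_\kappa\circ A_\phi$, after the factorisation $A_\phi = \rho^2 B_\phi$, is of the form $\rho^2\times(\text{analytic in }A_\phi)\times(2\log\rho + \log B_\phi)$ plus a term analytic in $A_\phi$ — in particular a smooth map of $\phi$ with uniformly bounded derivatives, multiplied by the harmless $\phi$-independent weight $1$. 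The genuinely singular contribution for $N=2$ is therefore $\int\int(-\log A_\phi)\,r\,\phi(\sigma)^2 = \int\int(-2\log\rho - \log B_\phi)\,r\,\phi(\sigma)^2$, in which the $\phi$-dependence again sits only in $\log B_\phi$ and $\phi(\sigma)^2$ while $-2\log\rho$ is $\phi$-independent and log-integrable; the spurious $\log\rho$ of the naive scaling has cancelled.

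With this structure, smoothness follows as in Lemma \ref{lem:Diff-of-cF-slab}: localise by a partition of unity into a near-diagonal piece (where the factorisation above applies, after passing to geodesic normal coordinates on $S^{N-1}$ centred at $\theta$) and a far piece (integrand and all $\phi$-derivatives bounded and smooth, nothing to do); apply \eqref{eq:Faa-de-Bruno} with $g$ equal to $x\mapsto G_{\kappa\rho,3}(\sqrt x)$, respectively $\log$ and $\widetilde R_\kappa$, composed with $T = B_\phi$, respectively $A_\phi$, and use \eqref{eq:Dk-LT2} to carry the polynomial weight $r^{N-1}\phi(\sigma)^N$; the $k$-th formal derivative of the integrand is then bounded pointwise by $c(\phi)\,\rho^{-(N-2)}\prod_j\|w_j\|_{C^1}$ (for $N=2$ by $c(\phi)(1+|\log\rho|)\prod_j\|w_j\|_{C^1}$), which is integrable in $(r,\sigma)$ and Hölder in $\theta$ uniformly, justifying differentiation under the integral and giving $\cF_{0,N}\in C^\infty(\cD, C^{0,\alpha}(S^{N-1}))$. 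Finally, to identify the first derivative, differentiate under the integral in the displayed representation; the $\phi$-derivative of the integrand splits into the weight term $G_{\kappa,N}(\sqrt{A_\phi})\,N r^{N-1}\phi(\sigma)^{N-1}w(\sigma)$ and the kernel term $G_{\kappa,N}'(\sqrt{A_\phi})\tfrac{D_\phi A_\phi[w]}{2\sqrt{A_\phi}}r^{N-1}\phi(\sigma)^N$ with $D_\phi A_\phi[w] = 2(Z_0-Z)\cdot\theta\,w(\theta) - 2r(Z_0-Z)\cdot\sigma\,w(\sigma)$, $Z = r\phi(\sigma)\sigma$. The $w(\theta)$-part of the kernel term equals $w(\theta)\theta\cdot\nabla_{Z_0}G_{\kappa,N}(|Z_0-Z|)\,r^{N-1}\phi(\sigma)^N$; changing variables back to $Z\in B_\phi^N$, using $\nabla_{Z_0} = -\nabla_Z$ and the divergence theorem (legitimate since $G_{\kappa,N}(|Z_0-\cdot|)\in W^{1,1}(B_\phi^N)$ for $N=2,3$, the boundary singularity being excised and passed to the limit), this integrates to $-w(\theta)\theta\cdot\int_{S^{N-1}}G_{\kappa,N}(|F_\phi(\theta)-F_\phi(\sigma)|)\nu_\phi(\sigma)\Gamma_\phi(\sigma)\,d\sigma$. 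The remaining $w(\sigma)$-part of the kernel term combines with the weight term: writing $(\nabla_Z G_{\kappa,N})\cdot\sigma = -\phi(\sigma)^{-1}\partial_r[G_{\kappa,N}(|Z_0 - r\phi(\sigma)\sigma|)]$ and integrating by parts in $r$ collapses the sum to $\int_{S^{N-1}}w(\sigma)\phi(\sigma)^{N-1}G_{\kappa,N}(|F_\phi(\theta)-F_\phi(\sigma)|)\,d\sigma$, i.e.\ (since $\sigma\cdot\nu_\phi(\sigma)\,\Gamma_\phi(\sigma) = \phi(\sigma)^{N-1}$ by \eqref{eq:normal-S-vp} and \eqref{eq:transformation-rule}) to $\int_{S^{N-1}}w(\sigma)\,\sigma\cdot\nu_\phi(\sigma)\,G_{\kappa,N}(|F_\phi(\theta)-F_\phi(\sigma)|)\Gamma_\phi(\sigma)\,d\sigma$. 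Adding the two pieces yields exactly \eqref{eq:deriv-cF_0-full}. The main obstacle throughout is the logarithmic kernel at $N=2$: it lacks the clean positive homogeneity that made the slab kernel transparent, and one must track the cancellation of the $\log\rho$ terms from the Bessel expansion with care — once that is done, the rest is parallel to the treatment of $\cF_1$ already carried out.
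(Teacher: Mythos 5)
Your proposal is correct in outline but follows a genuinely different route from the paper. The paper first converts the formal derivative to a surface integral over $S^{N-1}$ (via polar coordinates and the divergence theorem, exactly as you do to \emph{identify} the formula), and then proves smoothness by studying the map $\phi\mapsto D\cF_{0,N}(\phi)[w]$ as a surface integral; the singularity on $S^{N-1}$ is pulled back to a fixed pole $e$ via the rotations $R_\theta$, and the Hölder estimates in $\theta$ are established by the $\min(|\sigma-e|^\beta,|\theta_1-\theta_2|^\beta)/|\sigma-e|$ device of Lemma \ref{lem:est-cand-deriv}. The crucial point where the two approaches diverge is the treatment of $N=2$: the paper avoids the logarithm entirely by writing $2K_0(\kappa s)=\int_\R G_\kappa(\sqrt{s^2+t^2})\,dt$, which promotes the 2D kernel back to a 3D Yukawa kernel with one extra integration variable $\rho$; after rescaling $t=|\theta-\sigma|\rho$ the resulting $\cK$ kernel has clean exponential decay in $|\sigma-e|\rho$ and the Faà di Bruno bookkeeping proceeds exactly as in the slab Lemma \ref{lem:Diff-of-cF-slab}, with a $\beta-\epsilon$ loss of Hölder exponent caused by the $\int_{S^1}|\sigma-e|^{-1}\,d\sigma$ divergence. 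You instead stay with the \emph{bulk} polar integral and factor $A_\phi=\rho^2 B_\phi$ with $\rho^2=(1-r)^2+|\theta-\sigma|^2$, using the Bessel series of $K_0$ to split the logarithmic singularity off as $-\log A_\phi=-2\log\rho-\log B_\phi$. This works — the factorisation $A_\phi\asymp\rho^2$ with uniformly bounded $D^k_\phi B_\phi$ is correct, the Bessel decomposition isolates the right pieces, and the integration by parts in $r$ that recombines the kernel and weight terms into $\phi(\sigma)^{N-1}G(|F_\phi(\theta)-F_\phi(\sigma)|)$ via $\sigma\cdot\nu_\phi\,\Gamma_\phi=\phi^{N-1}$ is exactly right. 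The one point that deserves to be spelled out is the Hölder-in-$\theta$ estimate: both the weight $\rho^{-(N-2)}$ (resp. $|\log\rho|$) and the kernel depend on $\theta$, so the assertion "Hölder in $\theta$ uniformly" hides precisely the estimate the paper proves in Lemma \ref{lem:est-cand-deriv}, including the $\epsilon$-loss at $N=2$; as written you claim $C^{0,\alpha}$ target without acknowledging this loss. In short: same identification of the derivative, same Faà di Bruno skeleton, different singularity management for $N=2$ (Bessel series versus 3D lifting) and bulk versus surface parametrisation. The paper's 3D-lift trick is slicker in that it uniformises $N=2$ and $N=3$; yours is more elementary and makes the quadratic structure of $A_\phi$ in $\phi$ visible, at the price of bookkeeping the $\log\rho$ terms.
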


We first  make a formal   proof of  \eqref{eq:deriv-cF_0-full}. Then we  prove regularity estimates for the map   $\phi\mapsto D\cF_{0,N}(\phi)[w]$, for every fixed $w$. This will end the proof of the lemma.\\
For every $\d>0$, we define
\be
\cD_\d:=\{\phi\in C^{2,\a}(S^{N-1})\,:\, \phi>\d\}, 
\ee
Let $\phi\in \cD_\d$ and $w\in C^{2,\a}(S^{N-1})$ such that $\|w\|_{C^{2,\a}(S^{N-1})}< \d/2$.  We recall our definition of  $F_\vp $ in  \eqref{eq:def-F_vp-paramet} and    $S_\phi^{N-1}:=\de B^N_\phi$ for every $\phi \in \cD_\d$.  
 Using polar coordinates, \eqref{eq:normal-S-vp} and \eqref{eq:transformation-rule},  we have 
 \begin{align*}
 D \cF_{0,N}( \phi)[w](\th)& =  \frac{d}{dt}\Big|_{t=0} \int_{S^{N-1}}\int_{0}^{\phi(\s)+tw(\s)}G_{\k,N}(|F_{\phi+t w}(\th)-r\s |)r^{N-1} dr d\s\\
 &= \int_{S^{N-1}} G_{\k,N}(|F_\phi(\th)-\s \phi(\s) |)\phi^{N-1} w(\s) d\s\\
& + \int_{S^{N-1}}\int_{0}^{\phi(\s) }G_{\k,N}'(|F_\phi(\th)-r\s |) w(\th) \th\cdot (\th\phi(\th)-\s r))r^{N-1} dr\\
  & =\int_{S^{N-1}}{\s w(\s)\cdot \nu_\phi(\s)}G_{\k,N}(|F_\phi(\th)-F_\phi(\s) | )\G_\phi(\s)\, d\s \\
  &  - \int_{B_\phi^{N}}\n_y G_{\k,N}(|F_\phi(\th)-y | )\cdot \th w(\th ) \, dy  ,
 \end{align*}
 where $\G_\phi, \nu_\phi$ are defined in Section \ref{ss:prel-not}.
Using the divergence theorem in the last integral, we obtain
 \begin{align*}
& D \cF_{0,N}( \phi)[w](\th)  =\int_{S^{N-1}}{(w(\s)\s-w(\th)\th)\cdot \nu_\phi(\s)}G_{\k,N}(|F_\phi(\th)-F_\phi(\s) | )\G_\phi(\s)\, d\s  ,
 \end{align*}
 which is the expression in \eqref{eq:deriv-cF_0-full}. We will now prove that $\cF_{0,N} $ is smooth by deriving regularity estimates for $ D \cF_{0,N}( \phi)[w] $ computed above.\\
 By  \eqref{eq:normal-S-vp} and \eqref{eq:transformation-rule},  we have
   $$
  (\th-\s)  \cdot \nu_{\phi}(F_\phi(\s)) \, \G_\phi(\s)=  - (\th-\s)  \cdot\n\phi(\s) \phi^{N-2 }(\s)  +(\th-\s)  \cdot\s   \phi^{N-1 }(\s),
  $$
so that
   \begin{align}
 D \cF_{0,N}( \phi)[w](\th) & =\int_{S^{N-1}}(w(\s)-w(\th) )G_{\k,N}(|F_\phi(\th)-F_\phi(\s) | )\phi^{N-1}(\s) \, d\s \nonumber \\
&+{w(\th)}\int_{S^{N-1}}(\th-\s )\cdot \n\phi (\s)G_{\k,N}(|F_\phi(\th)-F_\phi(\s) | ) \phi^{N-2 }(\s) \, d\s \\
&+\frac{w(\th)}{2}\int_{S^{N-1}}|\th-\s |^2 G_{\k,N}(|F_\phi(\th)-F_\phi(\s) | ) \phi^{N-1 }(\s) \, d\s, \nonumber
 \end{align}
  where we have used $2(\th-\s)  \cdot\s=-|\th-\s|^2$.  To prove the regularity of $\phi\mapsto  D \cF_{0,N}( \phi)[w](\th)  $, for a fixed $w\in C^{2,\a}(S^1)$, it is enough to consider the first term. In fact, we will prove a stronger result than needed. \\
  
 For $\b\in (0,1)$ and $\d>0$, we consider the open sets 
  $$
  \cA:=\{\phi\in C^{1,\b}(S^{N-1})\,:\, \phi>0\}
  $$
  and
   $$
  \cA_\d:=\{\phi\in C^{1,\b}(S^{N-1})\,:\, \phi>\d\}.
  $$
We define the function  (the first term  in the expression of $ D \cF_{0,N}( \phi)[w](\th) $)  given by  
   \be\label{eq:def-h-1st-term-cF0}
  \cA_\d \to L^\infty(S^{N-1}),\qquad \phi\mapsto  h(\phi):= \int_{S^{N-1}}(w(\s)-w(\th) )G_{\k,N}(|F_\phi(\th)-F_\phi(\s) | )\phi^{N-1}(\s) \, d\s.  
 \ee 
 Recalling the notations in Section \ref{ss:prel-not}, by  a change of variable,  for every $\th\in S_e$,  we get 
     \begin{align}\label{eq:def-h-deriv-cF0N}
h(\phi)(\th)  =\int_{S^{N-1}}(w(R_\th\s)-w(\th) )G_{\k,N}(|F_\phi(\th)-F_\phi(R_\th\s) | )\phi^{N-1}(\s) \, d\s  .
 \end{align}
Once the following proposition is proved, Lemma \ref{lem:smooth-cF-0-N} follows immediately.
 \begin{Proposition}\label{Propo-reg-h}
 For every $\e\in (0,\b)$, 
the map $h:\cA\subset C^{1,\b}(S^{N-1})\to C^{0,\b-\e}(S^{N-1})$  is of class $C^\infty$. Moreover 
 \begin{align*}
\|D^k h(\phi)\| \leq  c\left(1+ \|{\phi }\|_{C^{1,\b}(S^{N-1})} \right)^{c}    \|w\|_{C^{\b}(S^{N-1})} ,
\end{align*}
for some positive constant $c$ depending only on  $\k,\b$ and $\e$. 
In addition if $N=3$, we can take $\e=0$.  In particular $\cF_{0,N}$ is of class $C^\infty$ in $\cD$.
\end{Proposition}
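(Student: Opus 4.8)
Since $\cH$ and $\cF$ are already known to be smooth, and the computation preceding the statement reduces everything to the three boundary integrals in the displayed formula for $D\cF_{0,N}(\phi)[w]$, it suffices to prove Proposition~\ref{Propo-reg-h}. The last two of those integrals are strictly less singular than the first — the factors $|\th-\s|^2$, respectively $(\th-\s)\cdot\n\phi(\s)$, cancel one power of the singularity of $G_{\k,N}$ — and are treated by the same argument, the only difference being that $\n\phi$ lies merely in $C^{0,\b}$, so one uses Hölder rather than Lipschitz moduli in $\th$; I therefore concentrate on $h$ as in \eqref{eq:def-h-1st-term-cF0}. Smoothness being local and $\cA=\bigcup_{\d>0}\cA_\d$, I fix $\d>0$ and work on $\cA_\d$. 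The proof rests on two structural facts. First, the rotation trick of Example~\ref{example-rotations}: cover $S^{N-1}$ by finitely many caps $S_e$ and on each perform the substitution $\s\mapsto R_\th\s$, which by \eqref{eq:Rota-isom} turns $h$ into \eqref{eq:def-h-deriv-cF0N} with $|\th-R_\th\s|=|e-\s|$, so the integrand is singular only at the \emph{fixed} point $\s=e$ and remains so under differentiation in $\th$. Second, $F_\phi(\s)=\s\phi(\s)$ is \emph{affine} in $\phi$, hence $\rho^2:=|F_\phi(\th)-F_\phi(R_\th\s)|^2$ is a quadratic polynomial in $\phi$: one has $D^k_\phi\rho^2\equiv0$ for $k\ge3$, $D_\phi\rho^2[v]=2\bigl(F_\phi(\th)-F_\phi(R_\th\s)\bigr)\cdot\bigl(\th v(\th)-R_\th\s\,v(R_\th\s)\bigr)$, and $D^2_\phi\rho^2[v_1,v_2]=2\bigl(\th v_1(\th)-R_\th\s\,v_1(R_\th\s)\bigr)\cdot\bigl(\th v_2(\th)-R_\th\s\,v_2(R_\th\s)\bigr)$.

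\textbf{Uniform derivative bounds.} On $\cA_\d$ one has the bi-Lipschitz estimate $\d|\th-\s|\le|F_\phi(\th)-F_\phi(\s)|\le c(1+\|\phi\|_{C^{1,\b}})|\th-\s|$, so after the rotation $\rho\asymp|e-\s|$; moreover $|\th v(\th)-R_\th\s\,v(R_\th\s)|\le c\,\|v\|_{C^1}|e-\s|$ by \eqref{eq:R-Lipschitz}, hence each factor $D^{|P|}_\phi\rho^2$ with $|P|\in\{1,2\}$ is $O\bigl((1+\|\phi\|_{C^{1,\b}})^{c}|e-\s|^2\bigr)$. Writing $G_{\k,N}(r)=\widetilde G_N(r^2)$, one has $|\widetilde G_3^{(m)}(s)|\le c_m\,s^{-1/2-m}$, while by \eqref{eq:decKnuInf0} $|\widetilde G_2^{(m)}(s)|\le c_{m,\eta}\,s^{-m-\eta}$ for every $\eta>0$ (for $m=0$ the logarithm of $K_0$ is absorbed into the small power). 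Applying the Fa\`a di Bruno formula \eqref{eq:Faa-de-Bruno} to $\phi\mapsto\widetilde G_N(\rho^2)$: since $D^{|P|}_\phi\rho^2$ vanishes for $|P|\ge3$, only partitions $\Pi$ of $\{1,\dots,k\}$ into blocks of size $\le2$ contribute; their $|\Pi|$ factors $D^{|P|}_\phi\rho^2$ supply $|e-\s|^{2|\Pi|}$, while $\widetilde G_N^{(|\Pi|)}(\rho^2)$ supplies $|e-\s|^{-1-2|\Pi|}$ ($N=3$), resp. $|e-\s|^{-2|\Pi|-\eta}$ ($N=2$), and the powers of $|e-\s|$ cancel, leaving, \emph{uniformly in} $k$,
\[
\bigl|D^k_\phi[G_{\k,3}(\rho)]\bigr|\ \le\ c_{k}\,(1+\|\phi\|_{C^{1,\b}})^{c_k}\,|e-\s|^{-1},\qquad \bigl|D^k_\phi[G_{\k,2}(\rho)]\bigr|\ \le\ c_{k,\eta}\,(1+\|\phi\|_{C^{1,\b}})^{c_k}\,|e-\s|^{-\eta}.
\]
Multiplying by $|w(R_\th\s)-w(\th)|\le\|w\|_{C^\b}|e-\s|^\b$ and by the $\phi$-derivatives of the smooth weight $\phi^{N-1}$ (product rule \eqref{eq:Dk-LT2}), the $k$-th $\phi$-derivative of the integrand of \eqref{eq:def-h-deriv-cF0N} is dominated by $\|w\|_{C^\b}(1+\|\phi\|_{C^{1,\b}})^{c_k}|e-\s|^{\b-1}$ (resp. $|e-\s|^{\b-\eta}$), which is integrable over $S^{N-1}$; dominated convergence then gives that $h$ is $k$-times differentiable with $\|D^kh(\phi)\|_{L^\infty}\le c(1+\|\phi\|_{C^{1,\b}})^c\|w\|_{C^\b}$.

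\textbf{H\"older regularity in $\th$ and conclusion.} To pass from $L^\infty$ to $C^{0,\b-\e}$ (to $C^{0,\b}$ when $N=3$), take $\th_1,\th_2$ in a common cap with $r:=|\th_1-\th_2|$ small. The integrand depends on $\th$ only through $w(R_\th\s)-w(\th)$, whose increment is $\le c\|w\|_{C^\b}\min(r^\b,|e-\s|^\b)$ by \eqref{eq:R-Lipschitz}, and through the factor $\widetilde G_N(\rho^2)\,\phi^{N-1}$, which is $C^1$ in $\th$ with $\bigl|\partial_\th\bigl[D^k_\phi(\widetilde G_N(\rho^2)\phi^{N-1})\bigr]\bigr|\le c_{k,\eta}(1+\|\phi\|_{C^{1,\b}})^{c_k}|e-\s|^{-2}$ (resp. $|e-\s|^{-1-\eta}$), i.e. one power worse than the sup bound, since $\partial_\th\rho^2=O(|e-\s|)$. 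Splitting $\int_{S^{N-1}}$ at $|e-\s|=2r$: on $\{|e-\s|\le2r\}$ one estimates the two $\th$-values by their pointwise size, contributing $O(r^\b)$; on $\{|e-\s|>2r\}$ one uses the increment bound $O\bigl(r^\b|e-\s|^{\b-1}+r|e-\s|^{\b-2}\bigr)$ for $N=3$, which integrates to $O(r^\b)$, while for $N=2$ the same splitting with $|e-\s|^{-1}$ replaced by $|e-\s|^{-\eta}$ (and the logarithm of $K_0$ absorbed) yields $O(r^{\b-\e})$ for every $\e>0$. This gives $\|D^kh(\phi)\|\le c(1+\|\phi\|_{C^{1,\b}})^c\|w\|_{C^\b}$ with $c=c(\k,\b,\e,\d,k)$, and the uniform bounds justify that these formal derivatives are the Fr\'echet ones; hence $h\in C^\infty(\cA,C^{0,\b-\e}(S^{N-1}))$, with $\e=0$ admissible for $N=3$ (there being no logarithm). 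Combining this with the easier estimates for the other two terms in $D\cF_{0,N}(\phi)[w]$ proves that $\cF_{0,N}$, hence $\cQ$, is $C^\infty$, i.e. Lemma~\ref{lem:smooth-cF-0-N} and Lemma~\ref{Regularity -cF-0-lattice-sph-cyl}.

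\textbf{Main obstacle.} The heart of the matter is the $k$-uniform bound on $D^k_\phi[G_{\k,N}(\rho)]$: taken naively, $\widetilde G_N^{(m)}(\rho^2)$ blows up like $|e-\s|^{-1-2m}$, which would destroy integrability for every $k\ge1$. It is precisely the affineness of $\phi\mapsto F_\phi$ — forcing $\rho^2$ to be quadratic, so that in Fa\`a di Bruno only size-$\le2$ blocks occur, each supplying a compensating factor $|e-\s|^2$ — that makes these powers cancel down to the single harmless $|e-\s|^{-1}$. Extracting the sharp H\"older exponent (and seeing that the logarithmic singularity of $K_0$ is what, for $N=2$, forces the loss $\e>0$) is a secondary, more routine, difficulty.
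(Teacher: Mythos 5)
Your proof is correct in substance, but it takes a route that is genuinely different from the paper's, and it is worth noting how.

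The paper follows the scheme of Cabr\'e--Fall--Weth: for $N=2$ it \emph{lifts} the two-dimensional kernel via the identity $2K_0(\k r)=\int_\R G_\k(\sqrt{r^2+t^2})\,dt$, introduces a scaled dummy variable $\rho=t/|\th-\s|$, and pulls $|\th-\s|$ out of the argument and into the \emph{parameter} of the kernel, writing $\cK(\phi,\th,\s,\rho)=G_{|\th-\s|\k}\bigl(\cdots\bigr)$. The virtue of this rescaling is that the remaining $\th$-dependence of $\cK$ is through quantities ($\L_0$ and $\phi(\th)\phi(R_\th\s)$) that are uniformly H\"older in $\th$, so the H\"older increment of $\cK$ carries \emph{no} additional singularity in $|\s-e|$; the prize, after integrating out $\rho$, is the kernel $|\s-e|^{-1}$ on $S^1$ and the resulting $|\log|\th_1-\th_2||$ factor, which forces the loss of $\e$ in the exponent for $N=2$. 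You instead estimate the modified-Bessel kernel $K_0$ directly, writing $G_{\k,N}(r)=\widetilde G_N(r^2)$ with power-law bounds on $\widetilde G_N^{(m)}$, and you make explicit the structural reason the Fa\`a di Bruno sum is benign: $\phi\mapsto F_\phi$ is \emph{affine}, so $\rho^2=|F_\phi(\th)-F_\phi(R_\th\s)|^2$ is \emph{quadratic} in $\phi$, only blocks of size $\le 2$ occur, and each supplies a compensating $|e-\s|^2$. This is a clean observation that the paper uses only implicitly (through the fact that $\L_0$ and $\phi(\th)\phi(\s)$ are polynomial in $\phi$). As a by-product your approach is sharper for $N=2$: if one uses the genuine bound $|\widetilde G_2(\rho^2)|\lesssim|\log|e-\s||$ rather than a power-law majorant, the splitting at $|e-\s|=r$ gives a clean $O(r^\b)$ increment without any logarithm, so in fact $\e=0$ is admissible for $N=2$ as well; the paper's loss of $\e$ comes from its own (non-sharp) majorization $\int_\R G_{|\s-e|\k}\,d\rho\lesssim|\s-e|^{-1}$, which discards the extra $\rho$-decay coming from the lower bound $\L_0^2+\phi\phi\ge\d^2$ in the denominator.

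Two small points you should tighten. First, the map $\th\mapsto R_\th$ of Example~\ref{example-rotations} is only Lipschitz, not $C^1$, so $\partial_\th\rho^2$ does not literally exist; your H\"older-in-$\th$ step should use difference-quotient (Lipschitz/H\"older increment) bounds in $\th$, as in the paper's \eqref{eq:estL2-th1-sig1}, rather than a true derivative. The estimates are unchanged, but phrasing them as increments is the honest argument. Second, the claim that the formal $k$-th derivative is the Fr\'echet derivative is asserted without proof; the paper closes this via a mean-value-theorem-type remainder estimate (last displayed equations in Section~\ref{ss:reg-cF_0-latt}), and your argument should include that step, since the uniform bounds alone do not quite give Fr\'echet differentiability without the remainder estimate.
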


 We now distinguish the two case $N=2$ and $N=3$. We start with the \\
 
 \noindent 
\textbf{Case $N=2$}.\\
\noindent 
Here, $h$ takes the form (recall \eqref{eq:int-cos-G-k-sqrt}) 
     \begin{align*}
h(\phi)(\th) & =\int_{\R}\int_{S^{1}}(w(R_\th\s)-w(\th) )G_{\k}(|(F_\phi(\th),0)-(F_\phi(R_\th\s) ,0)+t^2 e_3 | )\phi (\s) \, d\s dt. 
 \end{align*}
 We have
 $$
 |F_\phi(\th)-F_\phi( \s)|^2 +t^2=\L_0(\phi,\th, \s)^2+\phi( \s)\phi(\th)|\th-\s|^2+ t^2.
 $$
  Then with the change of variable $\rho=\frac{t}{|\th-\s|}$,   we get 
$$
h( \phi)(\th)  =\int_{\R}\int_{S^{1}}(w(R_\th\s)-w(\th) )\cK(\phi,\th,R_\th\s, \rho)\phi (R_\th\s )  \, d\s d\rho, 
$$
where $\cK:\cA_\d\times S^{ 1}\times S^{ 1}\times\R\to \R$ is given by 
$$
\cK(\phi,\th,\s,\rho ):=  G_{|\th-\s|\k}\left(\L_0(\phi,\th,\s )^2+\phi(\s)\phi(\th) +\rho^2 \right),
$$
with 
$$
\L_0(\phi, \th,\s)=\frac{    \phi(\th)-\phi( \s) }{|\th-\s|}.
$$
See e.g. \cite{Cabre2015B},
there exists a  constant $C>0$ such that for all $\s \in S$,  all $\th,\th_1,\th_2 \in S_e$  and all $\phi_1,\phi_2 \in C^{1,\b}(S^1)$ we have 
\be \label{eq:estL2-th-sig}
 | \L_0(\phi, R_\th \s, \th)|\leq C  \|\phi\|_{C^{1,\b}(S)}\|\phi\|_{C^{1,\b}(S^1)}  .
 \ee 
and 
\begin{align}
| \L_0(\phi, R_{\theta_1} \s,\th_1)&- \L_2(\phi,  R_{\theta_2} \s, \th_2)| \label{eq:estL2-th1-sig1}\\
&\leq  C \|\phi\|_{C^{1,\b}(S)}  |\theta_1-\theta_2|^\b. \nonumber
\end{align}
We follow the arguments as in  \cite{Cabre2015B}, where a more singular kernel was considered. 
\begin{Lemma}\label{lem:est-cK} For every $e\in S^1, k\in \N,\d>0$, there exists a constant   $c=c(\k,k,\d,\a)>0$ such that for $\phi\in\cA_\d$, $\rho\in \R$, $\th, \th_1,\th_2\in S_e$ and $ \s\in S^1$, we have 
$$
\|D^k_\phi\cK(\phi,\th,R_\th\s,\rho) \|\leq c (1+\|\phi\|_{C^{1,\b}(S)})^c \exp(-\k|\s-e|(\d^2+\rho^2))
$$
and
\begin{align*}
 \|D^k_\phi\cK(\phi,\th_1,R_{\th_1}\s,\rho )&- D^k_\phi \cK(\phi,\th_2,R_{\th_2}\s,\rho)\|\nonumber\\
&\leq c (1+\|\phi\|_{C^{1,\b}(S^1)})^c\, |\th_1-\th_2|^\b\, \exp(-\k|\s-e|(\d^2+\rho^2)).
\end{align*}

\end{Lemma}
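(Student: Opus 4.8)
The plan is to realise $\cK(\phi,\th,R_\th\s,\rho)$ as a composition $g_r\circ T$ of a one–variable function $g_r$ with a polynomial (degree two) map $\phi\mapsto T(\phi)$, and then to differentiate by the Faà di Bruno formula \eqref{eq:Faa-de-Bruno} together with the product rule. In this way the whole lemma reduces to (i) elementary pointwise bounds for the derivatives of the scalar function $g_r$ on the region where its argument is bounded below, and (ii) the estimates \eqref{eq:estL2-th-sig}--\eqref{eq:estL2-th1-sig1} already quoted from \cite{Cabre2015B} for the difference quotient $\L_0$, which is the only genuinely $\th$-sensitive, non-smooth object occurring.

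First fix $e\in S^1$. For $\th\in S_e$ one has $R_\th e=\th$, and since $R_\th$ is a rotation, $|R_\th\s-\th|=|\s-e|=:r\le 2$; in particular the weight $|\th-\s|\k$ in $\cK(\phi,\th,R_\th\s,\rho)$ equals $r\k$ and is independent of $\phi$. Put
$$g_r(y):=G_{r\k}\bigl(\sqrt{y}\bigr)=\frac{e^{-r\k\sqrt{y}}}{\sqrt{y}},\qquad T(\phi):=\L_0(\phi,R_\th\s,\th)^2+\phi(R_\th\s)\phi(\th)+\rho^2,$$
so that $\cK(\phi,\th,R_\th\s,\rho)=g_r\bigl(T(\phi)\bigr)$, and note $T(\phi)\ge\d^2+\rho^2$ since $\phi>\d$ and $\L_0^2\ge 0$. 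Two facts are then needed. \emph{Scalar bound:} by induction, $g_r^{(j)}(y)$ is a finite linear combination, with coefficients depending only on $j$, of terms $(r\k)^\ell\,y^{-1/2-m}e^{-r\k\sqrt y}$ with $0\le\ell,m\le j$; on $\{y\ge\d^2+\rho^2\}$ each such term is $\le c(\k,j,\d)\,e^{-r\k\sqrt{\d^2+\rho^2}}$ (use $r\le 2$ and $y^{-1/2-m}\le\d^{-1-2m}$), hence $|g_r^{(j)}(y)|\le c(\k,j,\d)\,e^{-r\k\sqrt{\d^2+\rho^2}}$ for all $y\ge\d^2+\rho^2$ and all $j\ge 0$. \emph{Inner map:} $T(\phi)$ is a polynomial of degree two in the functionals $\phi\mapsto\phi(\th)$, $\phi\mapsto\phi(R_\th\s)$, $\phi\mapsto\L_0(\phi,R_\th\s,\th)$ on $C^{1,\b}(S^1)$, whose dependence on $\phi$ is controlled uniformly in $(\th,\s)$, the last one by \eqref{eq:estL2-th-sig}; consequently $D^m T(\phi)=0$ for $m\ge 3$ and $\|D^m T(\phi)\|\le C(1+\|\phi\|_{C^{1,\b}(S^1)})$ for $m=1,2$, uniformly in $(\th,\s,\rho)$.

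The first estimate now follows: expand $D^k_\phi\cK(\phi,\th,R_\th\s,\rho)$ via \eqref{eq:Faa-de-Bruno} into a finite sum over partitions $\Pi$ of $\{1,\dots,k\}$ of $g_r^{(|\Pi|)}(T(\phi))\prod_{P\in\Pi}D^{|P|}T(\phi)[\cdot]$, bound each $g_r^{(|\Pi|)}(T(\phi))$ by the scalar bound and each product of $T$-derivatives by $C(1+\|\phi\|_{C^{1,\b}(S^1)})^k$ (only $|P|\le 2$ contribute), which gives $\|D^k_\phi\cK\|\le c(1+\|\phi\|_{C^{1,\b}(S^1)})^c\,e^{-\k|\s-e|\sqrt{\d^2+\rho^2}}$, the claimed exponential factor. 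For the Hölder estimate in $\th$, the key observation is that $r=|R_{\th_1}\s-\th_1|=|\s-e|=|R_{\th_2}\s-\th_2|$ is the same for $\th_1,\th_2$, so the identical scalar function $g_r$ occurs in both Faà di Bruno expansions; hence it suffices to compare, partition by partition, $g_r^{(|\Pi|)}(T_i(\phi))\prod_P D^{|P|}T_i(\phi)$ for $i=1,2$ with $T_i(\phi)=\L_0(\phi,R_{\th_i}\s,\th_i)^2+\phi(R_{\th_i}\s)\phi(\th_i)+\rho^2$. Telescoping the difference of products one uses, on one hand, $\|D^m T_1(\phi)-D^m T_2(\phi)\|\le C(1+\|\phi\|_{C^{1,\b}(S^1)})^c|\th_1-\th_2|^\b$ for $m=1,2$, which reduces to $|\phi(\th_1)-\phi(\th_2)|\le C\|\phi\|_{C^{1,\b}(S^1)}|\th_1-\th_2|$, to $|\phi(R_{\th_1}\s)-\phi(R_{\th_2}\s)|\le C\|\phi\|_{C^{1,\b}(S^1)}|\th_1-\th_2|$ via \eqref{eq:R-Lipschitz}, and to \eqref{eq:estL2-th1-sig1} (combined with \eqref{eq:estL2-th-sig} and $|x^2-y^2|\le(|x|+|y|)|x-y|$) for the $\L_0$-part; and, on the other hand, $|g_r^{(|\Pi|)}(T_1(\phi))-g_r^{(|\Pi|)}(T_2(\phi))|\le|g_r^{(|\Pi|+1)}(\xi)|\,|T_1(\phi)-T_2(\phi)|$ for some $\xi\ge\d^2+\rho^2$ between $T_1(\phi)$ and $T_2(\phi)$, where the scalar bound controls $|g_r^{(|\Pi|+1)}(\xi)|$ by $c\,e^{-\k|\s-e|\sqrt{\d^2+\rho^2}}$ and the ingredients just listed control $|T_1(\phi)-T_2(\phi)|$. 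Summing over the finitely many partitions yields the second bound; the case $|\th_1-\th_2|>1$ is covered by the first estimate since then $|\th_1-\th_2|^\b>1$.

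The main difficulty is organizational rather than conceptual: one must thread the Faà di Bruno formula through the product rule while keeping all constants and all powers of $(1+\|\phi\|_{C^{1,\b}(S^1)})$ uniform in the parameters $(\th,\s,\rho)$, and one must isolate the single non-smooth, $\th$-sensitive quantity — the difference quotient $\L_0$ — so that it is fed into the argument only through the imported bounds \eqref{eq:estL2-th-sig}--\eqref{eq:estL2-th1-sig1}; every remaining dependence on $\phi$ is smooth and, thanks to $T(\phi)\ge\d^2>0$, stays uniformly away from the singularity of $G_\k$, while the factor $e^{-\k|\s-e|\sqrt{\d^2+\rho^2}}$ supplies precisely the decay in $\rho$ that will be needed for the subsequent $\s$- and $\rho$-integrations in Proposition \ref{Propo-reg-h}.
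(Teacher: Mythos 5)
Your proof is correct and follows the same approach as the paper: the paper likewise realises $\cK$ as the scalar function $G_{|\s-e|\k}(\cdot)$ applied to a quadratic-in-$\phi$ expression, bounds the derivatives of that scalar function via \eqref{eq:est-deriv-G-a-latt}, and combines this with the Fa\`a di Bruno formula and the imported estimates \eqref{eq:estL2-th-sig}--\eqref{eq:estL2-th1-sig1} for $\L_0$. One remark: the exponential factor you obtain, $\exp(-\k|\s-e|\sqrt{\d^2+\rho^2})$, is the one \eqref{eq:est-deriv-G-a-latt} actually produces, so the factor $\exp(-\k|\s-e|(\d^2+\rho^2))$ in the printed statement of the lemma is a typo for $\exp(-\k|\s-e|(\d^2+\rho^2)^{1/2})$.
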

\proof
We have
$$
\cK(\phi,\th,R_\th\s,\rho ):=  G_{|\s-e|\k}\left(\L_0(\phi,\phi,\th,R_\th\s )^2+\phi(R_\th \s)\phi(\th) +\rho^2 \right).
$$
In view of the arguments in \cite{Cabre2015B}, it suffices to have some estimates related 
$$x\mapsto G_{a}(( x+\rho^2)^{1/2})=\frac{\exp(-a( x+\rho^2)^{1/2})}{( x+\rho^2)^{1/2}}$$ for every $x>\d^2 $, $\rho\in\R$ and $a\in (0,2\k)$. Now elementary computations show that 
\be \label{eq:est-deriv-G-a-latt}
\de^m_x G_{a}(( x+\rho^2)^{1/2})\leq c_{\k,\d,m} \exp(-a( x+\rho^2)^{1/2}) \qquad \textrm{ for  all $x>\d^2 $, $\rho\in \R$, $a\in (0,2\k)$.}
\ee
As in \cite{Cabre2015B}, combining this  estimate together with \eqref{eq:estL2-th-sig}, \eqref{eq:estL2-th1-sig1},  and eventually with the Fa\'{a} de Bruno formula  (see e.g. \cite{FaadeBruno-JW}), we get the estimates in the lemma. 
 
\QED

In the following, for a function $f: S^1 \to \R$, we use the notation
$$
[f; \theta_1,\theta_2]:= f(\theta_1)-f(\theta_2)\qquad \text{ for $\theta_1,\theta_2 \in S^1$,} 
$$
and we note the obvious equality 
\be \label{eq:prod-bracket}
[fg; \theta_1,\theta_2] = [f ;\theta_1,\theta_2]g(\theta_1) +f(\theta_2)[g;\theta_1,\theta_2] \quad \text{for $f,g: S^1 \to \R$, $\theta_1,\theta_2 \in S^1$.}
\ee
 
 The next step is to have estimates of all possible candidates for the derivatives of $h$. This is the purpose of the following 
 \begin{Lemma}\label{lem:est-cand-deriv}  
For $\b\in (0,1)$ and $\d>0$, we let ${\phi } \in \cA_\d$, with $\d>0$.  Let   $k\in \N $.  Let also $W,w\in C^{\b}(S^1)$ and ${\psi}_1,\dots, {\psi}_k \in C^{1,\b}(S^1)$.   Define the functions $\ti{h} : S^1 \to \R$ by 
\begin{align*}
\ti{h}(\th )&=\int_{{S^1}} (w(R_\th\s)-w(\th)) W(R_\th \s)\, D_\phi^k {\cK}(\phi,R_\th \s,\th,\rho)[\psi_1,\dots,\psi_k]     \,  d\s d\rho,
\end{align*}
 
 Then for every $\e\in (0,\b)$ there exists a constant  $c=c(N,\b,k,\d,\e)$ such that 
\begin{align}\label{eq:est-cF1}
\|\ti{h}(\th ) \|_{C^{\b-\e}(S^1)} & \leq  c\left(1+ \|{\phi }\|_{C^{1,\b}(S^1)} \right)^{c}   \prod_{i=1}^k \|{\psi}_i\|_{C^{1, \b}} \|w\|_{C^{\b}(S^1)}   \|W\|_{C^{\b}(S^1)}  .
\end{align}
\end{Lemma}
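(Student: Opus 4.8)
\proof
The plan is to prove \eqref{eq:est-cF1} by establishing separately an $L^\infty$ bound and a H\"older seminorm bound for $\ti h$ on $S_e$; the estimate on all of $S^1$ then follows because $\th\mapsto R_\th$ is Lipschitz and even under the reflection at $\{\th\cdot e=0\}$, so $S^1=S_e\cup S_{-e}$ and the estimate on $S_{-e}$ is the one for $-e$. Throughout I abbreviate the multilinear form applied to the $\psi_i$'s and use $|D^k_\phi\cK(\phi,\cdot,\cdot,\rho)[\psi_1,\dots,\psi_k]|\le\|D^k_\phi\cK\|\,\prod_{i=1}^k\|\psi_i\|_{C^{1,\b}}$ together with the two estimates of Lemma~\ref{lem:est-cK}. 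The one elementary fact I use repeatedly is that, since $R_\th$ is an isometry with $R_\th e=\th$ for $\th\in S_e$ (see \eqref{eq:def-Se} and \eqref{eq:Rota-isom}), the factor $w(R_\th\s)-w(\th)=w(R_\th\s)-w(R_\th e)$ vanishes at $\s=e$ and obeys $|w(R_\th\s)-w(\th)|\le\|w\|_{C^{\b}(S^1)}\,|\s-e|^{\b}$.

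For the $L^\infty$ bound I fix $\th\in S_e$, bound $|W(R_\th\s)|\le\|W\|_{C^{\b}(S^1)}$, and apply the first estimate of Lemma~\ref{lem:est-cK}, so that
\[
|\ti h(\th)|\le c\bigl(1+\|\phi\|_{C^{1,\b}(S^1)}\bigr)^{c}\Bigl(\prod_{i=1}^k\|\psi_i\|_{C^{1,\b}}\Bigr)\|w\|_{C^{\b}}\|W\|_{C^{\b}}\int_{S^1}\!\!\int_{\R}|\s-e|^{\b}\,e^{-\k|\s-e|(\delta^2+\rho^2)^{1/2}}\,d\rho\,d\s .
\]
Integrating first in $\rho$ leaves, up to the bounded factor $e^{-\k\delta^2|\s-e|}\le1$, a kernel comparable to the borderline singularity $|\s-e|^{-1}$ — exactly the singularity of the two-dimensional screened Coulomb kernel $G_{\k,2}=2K_0(\k\,\cdot)\sim-2\log(\cdot)$, cf. \eqref{eq:decKnuInf0} — and the remaining integral $\int_{S^1}|\s-e|^{\b-1}\,d\s$ is finite since $\b>0$. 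This gives the $L^\infty$ part of \eqref{eq:est-cF1}.

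For the H\"older bound I take $\th_1,\th_2\in S_e$; the case $|\th_1-\th_2|\ge\frac14$ is absorbed by the $L^\infty$ bound, so I may assume $|\th_1-\th_2|\le\frac14$. I write $\ti h(\th_1)-\ti h(\th_2)$ as the integral over $S^1\times\R$ of the bracket $[\cdot;\th_1,\th_2]$ (in the sense of \eqref{eq:prod-bracket}) of $\th\mapsto\bigl(w(R_\th\s)-w(\th)\bigr)\,W(R_\th\s)\,D^k_\phi\cK(\phi,R_\th\s,\th,\rho)[\psi_1,\dots,\psi_k]$, and expand that bracket by two applications of \eqref{eq:prod-bracket} into three groups according to which factor carries the difference. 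When the difference hits $W(R_\th\s)$, \eqref{eq:R-Lipschitz} gives $|W(R_{\th_1}\s)-W(R_{\th_2}\s)|\le C\|W\|_{C^{\b}}|\th_1-\th_2|^{\b}$; when it hits $D^k_\phi\cK$, the second estimate of Lemma~\ref{lem:est-cK} supplies the factor $|\th_1-\th_2|^{\b}$; in both groups the rest of the integrand is controlled exactly as in the $L^\infty$ step, so these two groups are bounded by $C(1+\|\phi\|_{C^{1,\b}})^{c}\bigl(\prod_i\|\psi_i\|_{C^{1,\b}}\bigr)\|w\|_{C^{\b}}\|W\|_{C^{\b}}\,|\th_1-\th_2|^{\b}$. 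The delicate group is the one in which the difference falls on $w(R_\th\s)-w(\th)$: there I use
\[
\bigl|\bigl(w(R_{\th_1}\s)-w(\th_1)\bigr)-\bigl(w(R_{\th_2}\s)-w(\th_2)\bigr)\bigr|\le C\|w\|_{C^{\b}}\,\min\bigl(|\s-e|^{\b},\,|\th_1-\th_2|^{\b}\bigr),
\]
which follows by pairing the four terms in the two obvious ways and using \eqref{eq:R-Lipschitz} and $|R_{\th_i}\s-\th_i|=|\s-e|$. After the $\rho$-integration, this group is bounded by a constant times $\int_{S^1}\min\bigl(|\s-e|^{\b},|\th_1-\th_2|^{\b}\bigr)|\s-e|^{-1}\,d\s$, which — splitting over $\{|\s-e|<|\th_1-\th_2|\}$ and $\{|\s-e|\ge|\th_1-\th_2|\}$ exactly as in the proof of Lemma~\ref{lem:rang-L-star-latt-sph} — is at most $C|\th_1-\th_2|^{\b}\bigl(1-\log|\th_1-\th_2|\bigr)\le C_{\e}\,|\th_1-\th_2|^{\b-\e}$ for every $\e\in(0,\b)$. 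Summing the three groups gives the H\"older seminorm estimate, and with the $L^\infty$ bound this is \eqref{eq:est-cF1}; Lemma~\ref{lem:smooth-cF-0-N}, and hence Proposition~\ref{Propo-reg-h}, then follows from these estimates by the standard argument identifying the candidate integrals with the genuine Fr\'echet derivatives of $h$.

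I expect the only genuine obstacle to be this logarithmic loss: in dimension $N=2$ the kernel, once $\rho$ has been integrated out, carries the critical singularity $|\s-e|^{-1}$ inherited from $K_0$, so the $w$-difference term unavoidably produces a factor $\bigl|\log|\th_1-\th_2|\bigr|$ — which is precisely why the statement only claims $C^{\b-\e}$ regularity, and why in the corresponding $N=3$ computation (no $\rho$-integration, and $\int_{S^2}|\th-\s|^{\b-1}\,d\s<\infty$ with room to spare) one may take $\e=0$. Everything else is routine bookkeeping with Lemma~\ref{lem:est-cK}, the product rule \eqref{eq:prod-bracket}, the Lipschitz bound \eqref{eq:R-Lipschitz}, and the two elementary integral estimates above.
\QED
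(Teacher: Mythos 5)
Your proof is correct and follows essentially the same route as the paper: you reduce to $S_e$ via the reflection symmetry, expand the difference $\ti h(\th_1)-\ti h(\th_2)$ with the product rule \eqref{eq:prod-bracket} into the same three groups (difference on $W$, on $D^k_\phi\cK$, and on $w(R_\th\s)-w(\th)$), invoke Lemma~\ref{lem:est-cK} and the same elementary H\"older/Lipschitz bounds, split the $\min$-integral over $\{|\s-e|<|\th_1-\th_2|\}$ and its complement, and absorb the resulting logarithmic factor into the $\e$-loss of H\"older exponent. The only cosmetic difference is that you state the $L^\infty$ bound and the H\"older seminorm bound as explicit separate steps (and read the kernel decay in Lemma~\ref{lem:est-cK} as $\exp(-\k|\s-e|\sqrt{\d^2+\rho^2})$, which is the intended form), but the substance is identical to the paper's argument.
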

\proof
Let $\th_1,\th_2 \in S^1_e$ and $\s\in S^1$. We first note that 
\begin{equation}
  \label{easy-est-G}
|W(R_{\th_1} \s)- W(R_{\th_2}\s)| \le C \|W\|_{C^{\b}(S^1)} |R_{\th_1} \s - R_{\th_2} \s|^\b \le C
\|W\|_{C^{\b}(S)}   |\th_1-\th_2|^\b,
\end{equation}
\begin{align}
  \label{eq:estL1-x1-x2}
|(w(R_{\th_1} \s))-w(\th_1))-( w(R_{\th_2}\s)-w(\th_1))| \le C \|w\|_{C^{\b}(S^1)}\min( | \s-e|^\b, |\th_1-\th_2|^\b)
\end{align}
and 
\begin{align}
  \label{eq:estL1-x}
|(w(R_{\th} \s))-w(\th)) | \le C \|w\|_{C^{\b}(S^1)}  | \s-e|^\b .
\end{align}
Using inductively \eqref{eq:prod-bracket} together with Lemma \ref{lem:est-cK}, (\ref{easy-est-G}), \eqref{eq:estL1-x} and \eqref{eq:estL1-x1-x2},  we get the estimate
\begin{align}\label{intermed-est-F_1}
&|[\ti{h};\th_1,\th_2 ]|  \leq c\left(1+ \|\phi \|_{C^{1,\b}(S^1)}^{c} \right)\|w \|_{C^{\b}(S^1)}\|W\|_{C^{\b}(S^1)}  
 \prod_{i=1}^k \|\psi_i\|_{C^{1,\b}(S^1)} \nonumber\\
& \quad \times   \left(     |\th_1-\th_2|^\b\!\! \int_{\R} \int_{S^{1}} | \s-e|^\b \exp(-\k |\s-e|\rho) d\s d\rho\right. \nonumber\\
&\left.\quad+  \int_{\R} \int_{S^{1}}\min( | \s-e|^\b, |\th_1-\th_2|^\b) \exp(-\k |\s-e|\rho) d\s d\rho\right)   \nonumber\\
&=    c\left(1+ \|\phi \|_{C^{1,\b}(S^1)}^{c} \right)\|w \|_{C^{\b}(S^1)}\|W\|_{C^{\b}(S^1)}  
 \prod_{i=1}^k \|\psi_i\|_{C^{1,\b}(S^1)} \times    \nonumber\\
&\left(     |\th_1-\th_2|^\b\!\! \int_{\R} \int_{S^{1}} \frac{\exp(-\k r) }{| \s-e|^{1-\b}}   d\s dr +  \int_{\R} \int_{S^{1}}\frac{\min( | \s-e|, |\th_1-\th_2|) \exp(-\k r)}{ |\s-e|} d\s dr \right).
\end{align}
For all $\th_1,\th_2\in S$, we then have  
\begin{align*}
\int_{S^{1}}\frac{\min( | \s-e|^\b, |\th_1-\th_2|^\b)}{ |\s-e|}   d\s 
&\leq  \int_{ |e-\s| \leq |\th_1-\th_2|}    | \s-e|^{\b-1} d\s\\
&\quad + |\th_1-\th_2|^\b  \int_{  |\th_1-\th_2|\leq |e-\s| \leq 2}|\s-\th|^{-1}  d\s\\
&\leq c |\th_1-\th_2|^\b+ c  |\th_1-\th_2|^\b |\log(|\th_1-\th_2|  )| .
\end{align*}
We thus deduce from (\ref{intermed-est-F_1}) that for all $\e\in (0,1)$  
\begin{align*}
|[\ti{h};\th_1,\th_2 ]| &  \leq c |\th_1-\th_2|^{\b-\e} \left(1+ \|\phi \|_{C^{1,\b}(S^1)}^{c} \right)\|w \|_{C^{\b}(S^1)}  
\|W\|_{C^{\b}(S^1)}  \prod_{i=1}^k \|\psi_i\|_{C^{1,\b}(S^1)}.
\end{align*}
\QED
\subsubsection{Completion of the proof of Proposition \ref{Propo-reg-h} for $N=2$.}
%
We consider 
\begin{align*}
\ov{h}(\th)&=\sum_{i=1}^k\int_\R \int_{S^1}(w(R_\th\s)-w(\th)) \phi_i(R_\th\s) D^k_\phi \cK(\phi,\th,R_\th\s,\rho)[\psi_1,\dots,\psi_k]d\s d\rho\\
&+ \sum_{i=1}^k\int_\R \int_{S^1}(w(R_\th\s)-w(\th)) \psi_i(R_\th\s) D^{k-1}_\phi\cK(\phi,\th,R_\th\s,\rho)[\psi_j]_{ \stackrel{j=1,\dots,k}{j\neq i}} d\s d\rho.
\end{align*}
Then by  Lemma \ref{lem:est-cand-deriv}, we have 
\begin{align}\label{eq:est-cF1}
\|\ov{h}  \|_{C^{\b-\e}(S^1)} & \leq  c\left(1+ \|{\phi }\|_{C^{1,\b}(S^1)} \right)^{c}   \prod_{i=1}^k \|{\psi}_i\|_{C^{1, \b}} \|w\|_{C^{\b}(S^1)}      .
\end{align}
We can now follow step by step the arguments in \cite{Cabre2015B}   to deduce that $h:\cA_\d\subset C^{1,\b}(S^1)\to C^{0,\b-\e}(S^1)$  is of class $C^\infty$. Moreover, by \eqref{eq:Dk-LT2}, we get  
$$
D^k h(\phi)[\psi_1,\dots,\psi_k](\th)=\ov{h}(\th). 
$$
Now, we pick  a $w\in C^{1,\b}(\R)$, with $\|w \|_{C^{1,\b}(\R)}<\d/2$. Then for   $\phi\in  \cA_{\d/2}$,    we have
\begin{align*}
[\cF_{0,N}(\phi+ w)-\cF_{0,N}(w)- h(\phi )](\th)&= \int_0^1 [h(\phi+ w) -  h(\phi )](\th)\, d\varrho\\
&= \int_0^1 \varrho\int_0^1 D h(\phi+ \rho \varrho w)[w]  (\th)\,d\rho d\varrho,
\end{align*} 
recall the expression of $h$ in \eqref{eq:def-h-deriv-cF0N}.
  Since $\phi+\rho \varrho w\in \cA_{\d}$ for $\rho,\varrho\in (0,1)$, it follows   from Lemma \ref{Propo-reg-h} that  
\begin{align*}
\|\cF_{0,N}(\phi+ w)-\cF_{0,N}(\phi)- &h(\phi )\|_{C^{0,\a}(S^{N-1})}\leq  \int_0^1 \int_0^1 \|h(\phi+ \rho \varrho  w)[w]\|_{C^{0,\b}(S^{N-1})}\,  d \rho d\varrho\\
&\leq c  \, \|w\|_{C^{0,\b}(\R)}^2(1+\|\phi\|_{C^{1,\b}(S^{N-1})}+\|w\|_{C^{0,\b}(S^{N-1})} )^c.
\end{align*} 
From this, we conclude that $\cF_{0,N}$ is differentiable on $\cA_{\d/2}$ with 
$$
\cF_{0,N}(\vp)[w]=h(\vp).
$$
The $C^\infty$-character of $\cF_{0,N}$ on $\cA$ now follows from the one of $h$ and the fact that $\d$ is an arbitrary positive number.
 \QED
\subsubsection{Completion of the proof of Proposition \ref{Propo-reg-h} for $N=3$.}
As we will see, this case is much easier because
\be \label{eq:int-s2-finite}
 \int_{S^2}|\s-e|^{-1}d\s<\infty.
 \ee
Here, $h$ takes the form (recall \eqref{eq:int-cos-G-k-sqrt}) 
     \begin{align}
h(\phi)(\th) & =  \int_{S^{2}}\frac{w(R_\th\s)-w(\th) }{ |\s-e|}\cK_3(\phi,\th,R_\th\s )\phi^2 (R_\th\s )  \, d\s , \nonumber \\
 \end{align}
where $\cK_3:\cA_\d\times S^{ 2}\times S^{2} \to \R$ is given by 
$$
\cK_3(\phi,\th,\s):=  G_{|\th-\s|\k}\left(\L_0(\phi, \th,\s )^2+\phi(\s)\phi(\th) \right),
$$
with, as above, 
$$
\L_0(\phi, \th,\s)=\frac{    \phi(\th)-\phi( \s) }{|\th-\s|}.
$$
  Moreover by  \eqref{eq:est-deriv-G-a-latt}, \eqref{eq:estL2-th-sig} and  \eqref{eq:estL2-th1-sig1}, we have the estimates 

$$
\|D^k_\phi\cK_3(\phi,\th,R_\th\s ) \|\leq c (1+\|\phi\|_{C^{1,\b}(S^2)})^c  
$$
and
\begin{align*}
 \|D^k_\phi\cK_3(\phi,\th_1,R_{\th_1}\s  )&- D^k_\phi \cK(\phi,\th_2,R_{\th_2}\s )\leq c (1+\|\phi\|_{C^{1,\b}(S^2)})^c |\th_1-\th_2|^\b,
\end{align*}
which holds 
for every $e\in S^2, k\in \N,\d>0$,  $\phi\in \cA_\d$, $\rho\in \R$, $\th, \th_1,\th_2\in S_e$ and $ \s\in S^2$.
We define 
\begin{align*}
\hat{h}(\th)&=\sum_{i=1}^k  \int_{S^2}\frac{w(R_\th\s)-w(\th)}{|\s-e|} \phi(R_\th\s) D^k_\phi \cK_3(\phi,\th,R_\th\s)[\psi_1,\dots,\psi_k]d\s\\
&+ \sum_{i=1}^k  \int_{S^2}\frac{w(R_\th\s)-w(\th)}{|\s-e|}  \psi_i(R_\th\s) D^{k-1}_\phi\cK_3(\phi,\th,R_\th\s)[\psi_j]_{ \stackrel{j=1,\dots,k}{j\neq i}} d\s.
\end{align*}
Then, from the estimates of the kernel $\cK_3$ above and \eqref{eq:int-s2-finite}, we can deduce that 
\begin{align}\label{eq:est-cF000}
\|\hat{h}  \|_{C^{\b}(S^2)} & \leq  c\left(1+ \|{\phi }\|_{C^{1,\b}(S^2)} \right)^{c}   \prod_{i=1}^k \|{\psi}_i\|_{C^{1, \b}} \|w\|_{C^{\b}(S^2)} .     
\end{align}

 Now similar arguments as in the case $N=2$, show that $h:\cA_\d\to C^{0,\b}(S^2)$ is of class $C^\infty$ and $D^k h[\psi_1,\dots,\psi_k](\phi)(\th)= \hat h(\th)$. The proof of Proposition \ref{Propo-reg-h}  is then completed.
 \QED

\end{document}